\def\ifundefined#1{\expandafter\ifx\csname#1\endcsname\relax}
\theoremstyle{plain}
\newtheorem{theorem}{Theorem}[section]
\newtheorem{proposition}[theorem]{Proposition}
\newtheorem{lemma}[theorem]{Lemma}
\newtheorem{corollary}[theorem]{Corollary}
\newtheorem{theorem*}{Theorem}
\theoremstyle{definition}
\newtheorem{remark}[theorem]{Remark}
\numberwithin{equation}{section}
\mathchardef\sa="303A
\renewcommand{\epsilon}{\varepsilon}
\newcommand{\R}{\ensuremath{\mathbf{R}}}
\newcommand{\ds}{\ensuremath{\displaystyle}}
\newcommand{\ip}[3]{\ensuremath{( {#1}, \, {#2} )}}
\newcommand{\lm}[1][m]{\ensuremath{\lambda_{#1}}}
\newcommand{\um}[1][m]{\ensuremath{\mu_{#1}}}
\newcommand{\tk}[1][k]{\ensuremath{\tau_{#1}}}
\newcommand{\eps}{\ensuremath{\epsilon}}
\newcommand{\Jm}[1][m]{\ensuremath{J_{#1}}}
\newcommand{\tke}[1][{}]{\ensuremath{\widehat{\tau_{#1}}}}
\newcommand{\kk}[1][k]{\ensuremath{\kappa_{#1}}}
\newcommand{\Llm}[1][m]{\ensuremath{\Lambda_{#1}(\Oe)}}
\newcommand{\Lum}[1][m]{\ensuremath{\Lambda_{#1}(\Ot)}}
\newcommand{\vp}{\ensuremath{\varphi}}
\newcommand{\vs}{\ensuremath{\psi}}
\newcommand{\Vp}{\ensuremath{\Psi_{\vp}}}
\newcommand{\Vpk}[1][k]{\ensuremath{\Psi_{\vp_{#1}}}}
\newcommand{\Vs}{\ensuremath{\Psi_{\vs}}}
\newcommand{\vve}{\ensuremath{\widetilde{v}}}
\newcommand{\Kttext}{\ensuremath{W}}
\newcommand{\B}{\ensuremath{B}}
\renewcommand{\S}{\ensuremath{S}}
\newcommand{\Se}{\ensuremath{S_1}}
\newcommand{\St}{\ensuremath{S_2}}
\newcommand{\Sj}{\ensuremath{S_j}}
\newcommand{\Ss}{\ensuremath{S^{*}}}
\newcommand{\Ke}{\ensuremath{K_1}}
\newcommand{\Kt}{\ensuremath{K_2}}
\newcommand{\Kj}[1][j]{\ensuremath{K_{#1}}}
\newcommand{\Pm}[1][m]{\ensuremath{P_{#1}}}
\newcommand{\Qm}[1][m]{\ensuremath{Q_{#1}}}
\newcommand{\rr}{\ensuremath{\rho}}
\newcommand{\N}{\ensuremath{N}}
\newcommand{\Bext}{\ensuremath{\widetilde{\B}}}
\renewcommand{\H}{\ensuremath{H}}
\newcommand{\He}{\ensuremath{H_1}}
\newcommand{\Ht}{\ensuremath{H_2}}
\newcommand{\Hj}[1][j]{\ensuremath{H_{j}}}
\newcommand{\Xj}[1][j]{\ensuremath{X_{#1}}}
\newcommand{\Xm}[1][m]{\ensuremath{\mathcal{X}_{#1}}}
\newcommand{\Xme}{\ensuremath{\mathcal{X}}}
\newcommand{\Oe}{\ensuremath{\Omega_1}}
\newcommand{\Ot}{\ensuremath{\Omega_2}}
\newcommand{\Oj}[1][j]{\ensuremath{\Omega_{#1}}}
\newcommand{\Dd}{\ensuremath{D}}
\newcommand{\BR}{\ensuremath{D}}
\newcommand{\delt}{\ensuremath{\delta}}
\newcommand{\Ote}{\ensuremath{\widetilde{\Omega}_{2}}}
\newcommand{\nhat}{\ensuremath{\widehat{n}}}
\newcommand{\nXY}{\ensuremath{\nabla_{XY}}}
\newcommand{\gd}{\ensuremath{\gamma_{\eta}}}
\newcommand{\Gd}{\ensuremath{\Gamma_{\eta}}}
\newcommand{\gR}{\ensuremath{\gamma_{R}}}
\newcommand{\gz}{\ensuremath{\gamma_{0}}}
\title{Hadamard Type Asymptotics for Eigenvalues of the Neumann Problem for Elliptic Operators}
\author{Vladimir Kozlov}
\author{Johan Thim\footnote{Corresponding author: {\tt johan.thim@liu.se}}}
\affil{\small Department of Mathematics, University of Link\"{o}ping, Link\"{o}ping, Sweden}
\date{\today}
\begin{document}

\maketitle

\begin{abstract}
\noindent
This paper considers how the eigenvalues of the Neumann problem for an elliptic operator depend on the domain. The proximity of two domains is measured in terms of the norm of the difference between the two resolvents corresponding to the reference domain and the perturbed domain, and the size of eigenfunctions outside the intersection of the two domains.  This construction enables the possibility of comparing both nonsmooth domains and domains with different topology.  An abstract framework is presented, where the main result is an asymptotic formula where the remainder is expressed in terms of the proximity quantity described above when this is relatively small.  As an application, we develop a theory for the Laplacian in Lipschitz domains.  In particular, if the domains are assumed to be $C^{1,\alpha}$ regular, an asymptotic result for the eigenvalues is given together with estimates for the remainder, and we also provide an example which demonstrates the sharpness of our obtained result.

\bigskip

\noindent
{\bf Keywords}: Hadamard formula; Domain variation; Asymptotics of eigenvalues; Neumann problem

\medskip

\noindent
{\bf MSC2010}: 35P05, 47A75, 49R05, 47A55

\end{abstract}

%%%%%%%%%%%%%%%%%%%%%%%%%%%%%%%%%%%%%%%%%%%%%%%%%%%%%%%%%%%%%%%%%%5
%%%%%%%%%%%%%%%%%%%%%%%%%%%%%%%%%%%%%%%%%%%%%%%%%%%%%%%%%%%%%%%%%%5
%%%%%%%%%%%%%%%%%%%%%%%%%%%%%%%%%%%%%%%%%%%%%%%%%%%%%%%%%%%%%%%%%%5
%%%%%%%%%%%%%%%%%%%%%%%%%%%%%%%%%%%%%%%%%%%%%%%%%%%%%%%%%%%%%%%%%%5
%%%%%%%%%%%%%%%%%%%%%%%%%%%%%%%%%%%%%%%%%%%%%%%%%%%%%%%%%%%%%%%%%%5
%%%%%%%%%%%%%%%%%%%%%%%%%%%%%%%%%%%%%%%%%%%%%%%%%%%%%%%%%%%%%%%%%%5
%%%%%%%%%%%%%%%%%%%%%%%%%%%%%%%%%%%%%%%%%%%%%%%%%%%%%%%%%%%%%%%%%%5
%%%%%%%%%%%%%%%%%%%%%%%%%%%%%%%%%%%%%%%%%%%%%%%%%%%%%%%%%%%%%%%%%%5

\section{Introduction}
The aim of this article is to describe how the eigenvalues of the Neumann problem for an elliptic
operator depend on the domain. 
A large quantity of studies of the corresponding Dirichlet problem exists in the literature; see, for instance,
Grinfeld~\cite{Grinfeld2010}, Henrot~\cite{Henrot2006}, Kozlov~\cite{Kozlov2013,Kozlov2006}, 
Kozlov and Nazarov~\cite{Kozlov2012}, and references found therein. 
However, less has been written about the Neumann problem. 
In this article, we present a framework for the Neumann problem similar to the one developed for the 
Dirichlet problem in~\cite{Kozlov2013}.

Investigations of how eigenvalues change when the domain is perturbed is a classical problem.
Rayleigh~\cite{Rayleigh1877} studied eigenvalues and domain perturbation in connection with acoustics
as early as in the nineteenth century. 
In the early twentieth century, Hadamard~\cite{Hadamard1968} studied perturbations of domains with smooth boundary,
where the perturbed domain~$\Omega_{\epsilon}$ is represented by~$x_{\nu} = \epsilon h(x')$
where~$x' \in \partial \Omega_0$,~$x_{\nu}$ is the signed distance to the boundary ($x_{\nu} < 0$ for~$x \in \Omega_0$),~$h$ is a smooth
function, and~$\epsilon$ is a small parameter. Hadamard considered the Dirichlet problem, 
but a formula of Hadamard-type for the first nonzero eigenvalue of the Neumann-Laplacian is given by
\[
\Lambda(\Omega_{\epsilon}) = \Lambda(\Omega_0) + 
	\epsilon \int_{\partial \Omega_0} h \bigl( |\nabla \vp|^2 - \Lambda(\Omega_0) \vp^2  \bigr) dS 
	+ o(\epsilon),
\]
where $dS$ is the surface measure on~$\partial \Omega_0$ and~$\vp$ is an eigenfunction corresponding
to~$\Lambda(\Omega_0)$ such that~$\| \vp \|_{L^2(\Omega_0)} = 1$; 
compare with Grinfeld~\cite{Grinfeld2010}.
A study of asymptotics for singular perturbations can be found in, e.g., 
Mel'nyk and Nazarov~\cite{Melnik1997}, Laurain et al. in~\cite{Laurain2011}, Kozlov and Nazarov~\cite{Kozlov2011},
and references found therein.
The problem of domain dependence of eigenvalues is closely related to shape optimization. We refer
to Henrot~\cite{Henrot2006}, and Soko\l{}owski and Zol\'esio~\cite{Sokolowski1992}, and references found therein.

Suppose that~$\Oe$ and~$\Ot$ are domains in~$\R^n$,~$n \geq 2$.
This article considers the spectral problems
\begin{equation}
\label{eq:maineq}
\left\{
\begin{aligned}
& -\Delta  u = \Llm[{}] u   & & \mbox{in } \Oe,\\
&\partial_{\nu}  u  =  0 & & \mbox{on } \partial \Oe
\end{aligned} \right.
\end{equation}
and
\begin{equation}
\label{eq:maineq2}
\left\{
\begin{aligned}
& -\Delta v = \Lum[{}] v  & & \mbox{in } \Ot,\\
&\partial_{\nu} v  =  0 & & \mbox{on } \partial \Ot,
\end{aligned} \right.
\end{equation}
where~$\partial_{\nu}$ is the normal derivative with respect to the outwards normal and if the boundary is nonsmooth,
we consider the corresponding weak formulation of the problem.
Our results are, however, applicable to a wider class of partial differential operators. In particular to uniformly elliptic
operators of second order.

We start the paper with an abstract setting of the problem in a Hilbert space~$H$. We assume that two subspaces~$\He$
and~$\Ht$ are given together with positive definite operators~$\Ke$ and~$\Kt$ acting in~$\He$ and~$\Ht$, respectively.
We assume that~$\Ke$ is a compact operator. We choose an eigenvalue~$\lambda^{-1}$ of~$\Ke$ and denote by~$\Xme \subset \He$
the linear combination of all eigenvectors corresponding to eigenvalues greater than or equal to~$\lambda^{-1}$. The proximity of
the operators~$\Ke$ and~$\Kt$ is measured by a constant~$\eps$ in the inequalities
\[
\| \vp - \S \vp\|^2 \leq \eps \| \vp \|^2 \quad \mbox{for every } \vp \in \Xme
\]     
and
\[
\| (\Kt - \S \Ke \Ss) w \|^2 \leq \eps \| w \|^2 \quad \mbox{for every } w \in \Ht.
\]
Here,~$\S = \St$ is the orthogonal projector from~$H$ into~$\Ht$ and~$\Ss$ is the adjoint operator of~$\S \colon \He \rightarrow \Ht$.
Under these assumption we prove that the operator~$\Kt$ has exactly the same number of eigenvalues in a neighborhood of~$\lambda^{-1}$,
independent of~$\eps$, as the multiplicity of the eigenvalue~$\lambda^{-1}$ of~$\Ke$. This is a consequence of
the continuous dependence of eigenvalues on the domain; see, e.g., Henrot~\cite{Henrot2006}.
Moreover, we present an asymptotic formula for
these eigenvalues where the remainder term is relatively small compared to the leading term. This asymptotic result
improves Theorem~1 in~\cite{Kozlov2006} in two ways. First, we consider~$\He$ and~$\Ht$ as subspaces of a fixed Hilbert space and
can compare operators acting there with the help of orthogonal projectors, which simplifies the conditions of Theorem~1.
Secondly, and perhaps more importantly, the remainder term in our theorem is ``smaller'' with respect to the leading term,
which is not necessarily the case in Theorem~1 from~\cite{Kozlov2006}.

To characterize how close the two domains are,
we will use the Hausdorff distance between the sets~$\Oe$ and~$\Ot$, i.e.,
\begin{equation}
\label{eq:d_def}
d = \max \{ \sup_{x \in \Oe} \inf_{y \in \Ot} |x-y|, \; \sup_{y \in \Ot} \inf_{x \in \Oe} |x-y| \}.
\end{equation}
We do not assume that one domain is a subdomain of the other. It should be noted however, 
that the abstract result presented below permits a more general type of proximity quantity 
for the two domains; see~(\ref{eq:def_eps}) and~(\ref{eq:req_Xm}) in Section~\ref{s:abstract}.

If~$\Oe$ is a~$C^{1,\alpha}$ domain with~$0 < \alpha < 1$ and~$\Ot$ is a Lipschitz perturbation of~$\Oe$
in the sense that the perturbed domain~$\Ot$ can be characterized by a function~$h$
defined on the boundary~$\partial \Oe$ such that every point~$(x',x_{\nu}) \in \partial \Ot$ is 
represented by~$x_{\nu} = h(x')$, where~$(x',0) \in \partial \Oe$ and~$x_{\nu}$ is the signed distance to~$\partial \Oe$
as defined above. 
Moreover, the function~$h$ is assumed to be Lipschitz continuous and satisfy~$|\nabla h| \leq C d^{\alpha}$. 
Then we obtain the following result; see Corollary~\ref{c:holder_boundary}. 

\begin{theorem}
\label{i:t:holder_boundary}
Suppose that~$\Oe$ is a~$C^{1,\alpha}$-domain with~$0 < \alpha < 1$ and~$\Ot$ is as described above, that the
problem in~{\rm(}\ref{eq:maineq}{\rm)} has a discrete spectrum, and that~$m$ is fixed. 
Then there exists a constant~$d_0 > 0$ such that if~$d \leq d_0$, then 
\begin{equation}
\label{i:eq:holder_main_boundary}
\begin{aligned}
\Lum[k] - \Llm[m] = {} & 
\kk + O(d^{1+\alpha})
\end{aligned}
\end{equation}
for every~$k = 1,2,\ldots,\Jm[m]$, where~$\Jm[m]$ is the dimension of the eigenspace corresponding to~$\Llm[m]$.
Here~$\kk[{}] = \kk$ is an eigenvalue of the problem
\begin{equation}
\label{eq:T2_tk_intro}
\kk[{}] \ip{\vp}{\vs}{1} = 
\int_{\partial \Oe} h(x') \bigl( 
	\nabla \vp \cdot \nabla \vs 
	- \Llm \vp \vs
	\bigr) dS(x')
\quad \mbox{for all } \vs \in \Xj[m], 
\end{equation}
where~$\vp \in \Xj[m]$. Moreover, $\kk[1],\kk[2],\ldots,\kk[\Jm]$ in~{\rm(}\ref{i:eq:holder_main_boundary}{\rm)}
run through all eigenvalues of~{\rm(}\ref{eq:T2_tk_intro}{\rm)} counting their multiplicities.
\end{theorem}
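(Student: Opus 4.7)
The plan is to realize the Neumann Laplacians on~$\Oe$ and~$\Ot$ as operators~$\Ke$ and~$\Kt$ in the framework of Section~\ref{s:abstract}. Working modulo constants (so that the Laplacian is invertible), I take the ambient Hilbert space~$H = L^2(\Oe \cup \Ot)$ with~$\He, \Ht$ the subspaces of mean-zero functions on~$\Oe, \Ot$ extended by zero, and let~$\Ke, \Kt$ be the corresponding compact inverse Neumann operators; the orthogonal projection~$\S = \St$ is then essentially restriction to~$\Ot$ (with a mean-zero adjustment). The eigenvalues~$\Llm, \Lum[k]$ correspond to~$\Llm^{-1}, \Lum[k]^{-1}$, and the abstract theorem yields expansions for~$\Lum[k]^{-1} - \Llm^{-1}$ in terms of eigenvalues of a finite-dimensional bilinear form on~$\Xj[m]$, with a remainder relatively small compared to the proximity parameter~$\eps$. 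After inverting and rescaling, this produces the stated expansion~$\Lum[k] - \Llm = \kk + \mbox{remainder}$.

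\textbf{Identifying the leading bilinear form.} The leading-order bilinear form extracted from the abstract theorem should admit the representation
\[
B(\vp,\vs) = \int_{\Ot\setminus\Oe}\bigl(\nabla\vp\cdot\nabla\vs - \Llm\vp\vs\bigr)\,dx - \int_{\Oe\setminus\Ot}\bigl(\nabla\vp\cdot\nabla\vs - \Llm\vp\vs\bigr)\,dx,
\]
obtained by subtracting the two weak formulations. To match this with the surface integral in~(\ref{eq:T2_tk_intro}), I would parametrize the thin layer~$\Oe \triangle \Ot$ by~$(x',x_\nu)$ with~$x' \in \partial\Oe$ and~$x_\nu$ between~$0$ and~$h(x')$. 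A Taylor expansion of~$\vp, \vs$ and their gradients in~$x_\nu$, using~$C^{1,\alpha}(\overline{\Oe})$ regularity plus the Neumann condition~$\partial_\nu\vp = 0$ on~$\partial\Oe$, then gives
\[
B(\vp,\vs) = \int_{\partial\Oe} h(x')\bigl(\nabla\vp\cdot\nabla\vs - \Llm\vp\vs\bigr)\,dS(x') + E(\vp,\vs),
\]
where the error term~$E$ is of order~$d^{1+\alpha}$ thanks to the Lipschitz bound~$|\nabla h|\leq Cd^\alpha$.

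\textbf{Proximity estimates and the principal obstacle.} The proximity parameter~$\eps$ involves two bounds: the first,~$\|\vp - \S\vp\|^2 \leq C d\|\vp\|^2$, follows from~$\vp \in L^\infty(\overline{\Oe})$ and~$|\Oe\triangle\Ot| = O(d)$; the second,~$\|(\Kt - \S\Ke\Ss)w\|^2 \leq C d \|w\|^2$, follows from comparing the solutions of the two Neumann problems on~$\Oe \cap \Ot$ together with~$H^1$-trace bounds over the boundary layer of width~$O(d)$. The principal obstacle is that the leading term~$\kk$ is itself of order~$d$, so I need the combined remainder (from both the abstract theorem and the boundary expansion) to be strictly of smaller order~$d^{1+\alpha}$. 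This forces (i)~a quantitative form of the abstract result in which the remainder is controlled by~$\eps$ times the leading size, and more importantly (ii)~the sharp boundary expansion above, in which the Neumann condition~$\partial_\nu \vp = 0$ on~$\partial\Oe$ is essential to absorb what would otherwise be an~$O(d)$ contribution on the boundary strip into the higher-order term~$O(d^{1+\alpha})$; relaxing either the~$C^{1,\alpha}$ regularity or the hypothesis on~$\nabla h$ would degrade this exponent.
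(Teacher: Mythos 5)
The overall strategy you outline — reduce to the abstract theorem, identify the leading bilinear form as a boundary-layer volume integral, and Taylor expand onto $\partial\Oe$ using the Neumann condition to gain a factor $d^{\alpha}$ — is the right skeleton, but there are genuine gaps in the quantitative estimates that actually carry the proof.

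First, the claimed proximity bound $\| (\Kt - \S \Ke \Ss) w \|^2 \leq C d \| w \|^2$ is too strong. The obstruction is the piece of $w$ supported in $\Ot \setminus \Oe$: there $\St w = w$ and one can only bound the contribution by $C d^{1/2} \|w\|^2$, so the general Lipschitz case gives $\eps \sim d^{1/2}$, not $d$. The paper recovers the extra factor $d^{\alpha}$ not from a trace bound in the thin layer but from the normal derivative of $\Kt \St w$ being small on $\partial(\Oe\cap\Ot)$ — specifically $\sup |\partial_\nu \Kj \Sj w| \le C d^\alpha \|w\|$, which uses the $C^{1,\alpha}$ regularity via the pointwise estimate $|n_1 - n_2| \le C d^{\alpha}$. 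Without this, $\eps$ is only $O(d^{1/2})$ and $|\tau_k|\eps = O(d^{3/2})$ is not $O(d^{1+\alpha})$ for $\alpha > 1/2$.

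Second, and more seriously, you describe the abstract remainder as ``controlled by $\eps$ times the leading size,'' but the remainder in the paper's Theorem~\ref{t:asymp} is $O(\rho + |\tau_k|\eps)$ where $\rho$ involves $\|\Kt \B \vp\|^2$. Bounding $\rho$ is the crux: it requires a Green's-formula representation of $\int_{\Ot} \B\vp\, \Kt w\,dx$ in terms of integrals over $\Oe\setminus\Ot$ and $\Ot\setminus\Oe$ (using extensions of $\Kt^2 w$ and $\vp$ across the two boundaries), which then yields $\|\Kt\B\vp\|^2 = O(d^{3/2})$ in the Lipschitz case and $O(d^{(3+\alpha)/2})$ after the $C^{1,\alpha}$ refinement on $\|\B\vp\|^2$. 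Your proposal never addresses this quantity, so the proof as sketched does not actually control the remainder at the required order. Related to this, the representation of the leading form as $\int_{\Ot\setminus\Oe} - \int_{\Oe\setminus\Ot}$ ``by subtracting the weak formulations'' is heuristically in the right direction, but making it rigorous requires extension operators (so $\vp$ makes sense on $\Ot\setminus\Oe$) and the same Green's-formula manipulations; one also needs to verify that the discarded quadratic term $\lm\int v_\vp v_\vs$ and the extension errors are all $O(d^{1+\alpha})$, which your sketch does not do. The normalization you chose (mean-zero functions and inverting $-\Delta$) is a legitimate alternative to the paper's use of $1-\Delta$, but it does not bypass any of these difficulties.
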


\noindent
Observe that~(\ref{eq:T2_tk_intro}) can be phrased as a spectral problem on the Hilbert space~$\Xj[m]$ by using the 
Riesz representation theorem of the operator on the right-hand side.

In Section~\ref{s:ex}, we consider a specific example of a Lipschitz perturbation of a cylinder in two dimensions. 
We prove that if~$\eta:  \R \rightarrow \R$ is a periodic nonnegative Lipschitz continuous 
function with period~$1$, 
and~$\Oe \subset \R^2$ is defined by~$0 < x < T$ and~$0 < y < R$, where~$R$ and~$T$
are constants, and the subdomain~$\Ot \subset \Oe$ is defined by~$0 < x < T$ and~$\delt \eta(x / \delt) < y < R$
for a small parameter~$\delt$, then
\begin{equation}
\label{intro:ex:main}
\Lum[k] - \Llm[m] = \kk + O(\delt^2)
\quad \mbox{for every } k = 1,2,\ldots,\Jm[m], 
\end{equation}
where~$\Jm[m]$ is the dimension of the eigenspace corresponding to~$\Llm[m]$.
Here, $\kk[{}] = \kk$ is an eigenvalue of the problem
\begin{equation}
\label{eq:ex:T2_tk_intro}
\begin{aligned}
\kk[{}] \ip{\vp}{\vs}{1} = {} &
\delt \eta_0 \int_0^T \bigl( \nabla \vp(x,0) \cdot \nabla \vs(x,0) 
 - \Llm \vp(x,0) \vs(x,0) \bigr) \, dx\\
&+
\delt \eta_1 \int_0^T \nabla \vp(x,0) \cdot \nabla \vs(x,0)  \, dx
\end{aligned}
\end{equation}
for all $\vs \in \Xj[m]$, where~$\vp \in \Xj[m]$ and
\[
\eta_0 = \int_0^1 \eta(X) \, dX \qquad \mbox{and} \qquad 
\eta_1 = \int_0^1 V(X,\eta(X)) \eta'(X) \, dX.
\]
The function~$V$ is the solution to~$-\Delta V = 0$ for~$0 < X < 1$ and~$Y > \eta(X)$ with the boundary 
condition~$\partial_{\nu} V(X,\eta(X)) = \eta'(X)(1+(\eta'(X))^2)^{-1/2}$ on~$Y = \eta(X)$ and periodic boundary conditions
on the remaining boundary. The constant~$\eta_1$ is not zero if~$\eta$ is not identically constant.
Observe that the first term in the right-hand side of~(\ref{eq:ex:T2_tk_intro}) coincides with  the right-hand
side of~(\ref{eq:T2_tk_intro}) up to order~$O(\delt^2)$. This proves that the condition~$\alpha > 0$ is sharp 
in Theorem~\ref{i:t:holder_boundary}.

In Corollary~\ref{c:lipschitz}, we obtain as a consequence of the methods developed 
that eigenvalues satisfy the following estimate for (uniformly) Lipschitz perturbations.
There exists a constant~$C$, independent of~$d$, such that
$|\Lum[k]- \Llm[m]| \leq C d $
for every~$k=1,2,\ldots,\Jm$; see Corollary~\ref{c:lipschitz}.
This estimate can be compared to results presented in, e.g., 
Burenkov and Davies~\cite{Burenkov2002} in the case when~$\Ot \subset \Oe$.

\section{Abstract Setting: Perturbation of Eigenvalues}
\label{s:abstract}
The fact that zero is an eigenvalue for the problems in~(\ref{eq:maineq}) and~(\ref{eq:maineq2}) is trivial, and to avoid technicalities
due to this, we will consider the operator~$1 -\Delta$. A number~$\lm[{}]$ is an eigenvalue
of the operator~$1 - \Delta$ if and only if~$\lm[{}] - 1$ is an eigenvalue of~$-\Delta$.
Let~$\Llm[k] = \lm[k]-1$,~$k=1,2,\ldots$, be the eigenvalues of~(\ref{eq:maineq}) 
enumerated according to
$
0 < \lm[1] < \lm[2] < \cdots .
$
We assume here that~$\Oe$ is connected.
Similarly, we let~$\Lum[k] = \um[{}] - 1$ be the eigenvalues of~(\ref{eq:maineq2}).
The subset~$\Xj[k]$ of~$\He$ is the eigenspace corresponding to eigenvalue~$\Llm[k]$, with the dimension denoted 
by~$\Jm[k] = \mbox{dim}(\Xj[k])$. Observe that~$\Xj[k]$ is also the eigenspace for the eigenvalue~$\lm[k]$ 
of the to~(\ref{eq:maineq}) corresponding problem for~$1 - \Delta$.

We proceed by introducing an abstract setting for the problems in~(\ref{eq:maineq}) and~(\ref{eq:maineq2}).
Suppose that~$\He$ and~$\Ht$ are infinite dimensional subspaces of a Hilbert space~$H$.
Let the operators~$\Kj \colon \Hj \rightarrow \Hj$ be
positive definite and self-adjoint for~$j=1,2$. Furthermore, let~$\Ke$ be compact.
We consider the spectral problems
\begin{equation}
\label{eq:eig_k1}
\Ke \vp = \lambda^{-1} \vp, \quad \vp \in \He,
\end{equation}
and
\begin{equation}
\label{eq:eig_k2}
\Kt U = \mu^{-1} U, \quad U \in \Ht,
\end{equation}
and denote by~$\lm[k]^{-1}$ for~$k=1,2,\ldots$ the eigenvalues of~$\Ke$.
Let~$\Xj[k] \subset \He$ be the eigenspace corresponding to eigenvalue~$\lm[k]^{-1}$. Moreover, we denote
the dimension of~$\Xj[k]$ by~$\Jm[k]$ and define~$\Xm = \Xj[1] + \Xj[2] + \cdots \Xj[m]$,
where~$m \geq 1$ is any integer. 
In this article we study eigenvalues of~(\ref{eq:eig_k2}) located in a neighborhood of~$\lm$,
where~$m$ is fixed.

In order to define the proximity of the operators~$\Ke$ and~$\Kt$,
which are defined on different spaces, we introduce the orthogonal projectors~$\Se \colon \H \rightarrow \He$ 
and~$\St \colon \H \rightarrow \Ht$. To simplify the notation, we also introduce
the operator~$\S$ as the restriction of~$\St$ to~$\He$. Thus~$\S$ maps~$\He$
into~$\Ht$ and its adjoint operator~$\Ss \colon \Ht \rightarrow \He$ is given by~$\Ss = \Se \St$.

We introduce a quantity~$\eps > 0$ as a constant in the inequalities
\begin{equation}
\label{eq:def_eps}
\ds \| (\Kt - \S \Ke \Ss) w \|^2 \leq \eps \| w \|^{2} \quad \mbox{for every } w \in \Ht
\end{equation}
and
\begin{equation}
\label{eq:req_Xm}
\| \vp - \S \vp \|^2 \leq \eps \| \vp \|^2
\quad \mbox{for every } \vp \in \Xm.
\end{equation}
The parameter~$\eps$ is the measure we use to describe the proximity of the spaces~$\He$ and~$\Ht$
and the operators~$\Ke$ and~$\Kt$.
In the following analysis, an important role is played by
the operator~$B \colon \He \rightarrow \Ht$ defined as
\[
\B = \Kt \S - \S \Ke.
\]

\begin{remark}
A common way to compare the proximity of domains in shape optimization is the parameter~$\sigma$ in
\begin{equation}
\label{eq:def_eps_sym}
\| (\Kt \St - \Ke \Se) w \|^2 \leq \sigma \| w \|^2
\quad \mbox{for every } w \in \H. 
\end{equation}
Let us show that~$\eps$ can be chosen as
\[
\eps = \sigma \max \{ 1, \, 4 \sum_{k=1}^m \lm[k]^2 \}.
\]
The fact that~(\ref{eq:def_eps}) holds can be verified directly. 
To verify that~(\ref{eq:req_Xm}) holds, let~$\vp \in \Xm$.
Then~$\vp = \sum_{k=1}^m c_k \vp_k$, where~$\vp_k \in \Xj[k]$ are orthonormal and~$c_k$
are constants. Thus, 
\[
\begin{aligned}
\| \vp - \S \vp \| &\leq \sum_{k=1}^m |c_k \lm[k]| \| \Ke \vp_k - \S \Ke \vp_k \| \\
&\leq \sum_{k=1}^m |c_k\lm[k]| ( \| \Ke \vp_k - \Kt \St \vp_k\| + \| \Kt \St \vp_k - \Ke \vp_k\|)\\
&\leq 2 \sigma^{1/2} \sum_{k=1}^{m} |c_k\lm[k]|,
\end{aligned}
\]
which implies that
\[
\| \vp - \S \vp\|^2 \leq 4\sigma \biggl( \sum_{k=1}^m \lm[k]^2 \biggr) \| \vp \|^2.
\]
\end{remark}

%%%%%%%%%%%%%%%%%%%%%%%%%%%%%%%%%%%%%%%%%%%%%%%%%%%%%%%%%%%%%%%%%%5
%%%%%%%%%%%%%%%%%%%%%%%%%%%%%%%%%%%%%%%%%%%%%%%%%%%%%%%%%%%%%%%%%%5
%%%%%%%%%%%%%%%%%%%%%%%%%%%%%%%%%%%%%%%%%%%%%%%%%%%%%%%%%%%%%%%%%%5
%%%%%%%%%%%%%%%%%%%%%%%%%%%%%%%%%%%%%%%%%%%%%%%%%%%%%%%%%%%%%%%%%%5
%%%%%%%%%%%%%%%%%%%%%%%%%%%%%%%%%%%%%%%%%%%%%%%%%%%%%%%%%%%%%%%%%%5
%%%%%%%%%%%%%%%%%%%%%%%%%%%%%%%%%%%%%%%%%%%%%%%%%%%%%%%%%%%%%%%%%%5
%%%%%%%%%%%%%%%%%%%%%%%%%%%%%%%%%%%%%%%%%%%%%%%%%%%%%%%%%%%%%%%%%%5
%%%%%%%%%%%%%%%%%%%%%%%%%%%%%%%%%%%%%%%%%%%%%%%%%%%%%%%%%%%%%%%%%%5

\section{Main Results}
Let~$\Pm$ be the orthogonal projection of~$\He$ onto~$\S \Xj[m]$. 
We now state results about the stability of eigenvalues and eigenvectors
depending on the parameter~$\eps$. The first lemma is a consequence of
the continuous dependence of eigenvalues on the domain; see, for instance,
Kato~\cite{Kato1966} (Sections~IV.3 and~V.3) or Henrot~\cite{Henrot2006} and references therein.

\begin{lemma}
\label{l:mainprop}
There exists positive constants~$\eps_0$,~$c$, and~$C$, depending
on the eigenvalues~$\lm[1],\ldots,\lm[m+1]$, such that, for~$\eps \leq \eps_0$, the
following assertions are valid\/{\rm:}
\begin{enumerate}[label={\rm(\roman{*})}, ref={\rm(\roman{*})}]
\item The operator~$\Kt$ has precisely~$\Jm$ eigenvalues in
$\bigl(\lm[m+1]^{-1} + c\eps^{1/2}, \, \lm[m-1]^{-1} - c\eps^{1/2} \bigr)$
and all of them are located in
$\bigl(\lm[m]^{-1} - c\eps^{1/2}, \, \lm[m]^{-1} + c\eps^{1/2} \bigr)$.
\item
If~$\um[{}]^{-1}$ is an eigenvalue of~{\rm(}\ref{eq:eig_k2}{\rm)} from the interval 
$\bigl(\lm[m]^{-1} - c\eps^{1/2}, \, \lm[m]^{-1} + c\eps^{1/2} \bigr)$
and~$U$ is a corresponding eigenfunction, then
\[
\| U - \Pm U \| \leq C \eps^{1/2} \| U \|.
\]
\end{enumerate}
\end{lemma}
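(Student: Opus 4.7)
The plan is to treat $\Kt$ as a perturbation of the self-adjoint reference operator $T := \S\Ke\Ss$ on $\Ht$. By assumption~(\ref{eq:def_eps}) one has $\|\Kt - T\|_{\Ht \to \Ht} \leq \eps^{1/2}$, so classical Weyl-type bounds for self-adjoint perturbations give $|\um[k]^{-1} - \nu_k^{-1}| \leq \eps^{1/2}$ for the ordered eigenvalues, where $\nu_k^{-1}$ denotes the $k$-th eigenvalue of $T$. The problem then reduces to locating the top eigenvalues of $T$, which I would compare to those of $\Ke$ on $\Xm$ by transplanting eigenvectors through $\S$.

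The key technical estimate is $\|\B\vp\|_{\Ht} \leq C\eps^{1/2}\|\vp\|$ for every $\vp \in \Xm$, where $\B = \Kt\S - \S\Ke$. I would prove it by decomposing
\[
\B\vp = (\Kt - \S\Ke\Ss)\S\vp + \S\Ke(\Ss\S - I)\vp,
\]
bounding the first term via~(\ref{eq:def_eps}) and $\|\S\vp\| \leq \|\vp\|$, and bounding the second by combining~(\ref{eq:req_Xm}) with the orthogonal-projection identity $\|\vp\|^2 - \|\S\vp\|^2 = \|\vp - \S\vp\|^2$, which yields $\|(\Ss\S - I)\vp\| \leq \eps^{1/2}\|\vp\|$ on $\Xm$, while $\|\S\Ke\|$ is uniformly controlled by $\lm[1]^{-1}$. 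Picking an orthonormal basis $\vp_1,\ldots,\vp_N$ of $\Xm$ consisting of eigenvectors of $\Ke$ with $\Ke\vp_j = \lm[k(j)]^{-1}\vp_j$, the same estimate shows that the system $\{\S\vp_j\}$ is almost orthonormal with Gram matrix $I + O(\eps^{1/2})$, and that $\Kt$ is almost diagonal in this system with entries $\lm[k(j)]^{-1}\delta_{ij} + O(\eps^{1/2})$.

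For assertion~(i), I would apply the Courant--Fischer min-max characterization of $\um[k]^{-1}$ with the trial subspaces $\S(\Xj[1] + \cdots + \Xj[k])$ for $k = 1,\ldots,m$ to obtain the lower bounds $\um[k]^{-1} \geq \lm[k]^{-1} - C\eps^{1/2}$, and the matching upper bounds $\um[k]^{-1} \leq \lm[k]^{-1} + C\eps^{1/2}$ from the dual min-max together with the global comparison $\|\Kt - T\| \leq \eps^{1/2}$. Choosing $\eps_0$ so small that $C\eps_0^{1/2}$ is smaller than one-third of the minimum gap among the consecutive distinct values in $\{\lm[m-1]^{-1},\lm[m]^{-1},\lm[m+1]^{-1}\}$ then isolates exactly $\Jm$ eigenvalues of $\Kt$ inside the $c\eps^{1/2}$-neighborhood of $\lm^{-1}$ and forces the two surrounding gaps to be empty.

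For~(ii), any eigenvector $U$ in the cluster lies in the span $\mathcal{E}$ of the $\Jm$ eigenvectors of $\Kt$ with eigenvalues in the indicated interval, and both $\mathcal{E}$ and $\S\Xj[m]$ are $\Jm$-dimensional subspaces of $\Ht$ (the latter because the almost-isometry above makes $\S|_{\Xj[m]}$ injective for small $\eps$). Since the Rayleigh quotients of $\Kt$ on $\S\Xj[m]$ cluster at $\lm[m]^{-1}$ up to $O(\eps^{1/2})$, while by~(i) the eigenvalues of $\Kt$ on $\mathcal{E}^{\perp}$ are separated from $\lm[m]^{-1}$ by a fixed positive distance, a Davis--Kahan sine-theta estimate gives that $\mathcal{E}$ and $\S\Xj[m]$ are within $O(\eps^{1/2})$ as subspaces, which in turn yields $\|U - \Pm U\| \leq C\eps^{1/2}\|U\|$. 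The main obstacle will be the matching upper bounds in~(i): producing the one-sided min-max estimate from a trial subspace is routine, but excluding spurious eigenvalues of $\Kt$ from the gaps requires the full global bound~(\ref{eq:def_eps}) on all of $\Ht$ rather than the test-function estimate on $\S\Xm$ alone.
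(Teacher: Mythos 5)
Your approach is correct, but it follows a genuinely different route from the paper: after establishing three auxiliary estimates (the near-isometry of $\S$ on $\Xm$, the bound $\|\B\vp\|\leq C\eps^{1/2}\|\vp\|$, and the Rayleigh-quotient comparison $(\Kt w,w)_2\leq(\Ke\Ss w,\Ss w)_1+\eps^{1/2}\|w\|^2$), the paper simply refers the reader to Sections~3.2--3.4 of~\cite{Kozlov2013} and does not carry out the remaining argument. Your proof instead passes through the compressed operator $T=\S\Ke\Ss$, Weyl's bound $|\um[k]^{-1}-\nu_k^{-1}|\leq\|\Kt-T\|\leq\eps^{1/2}$ for the ordered eigenvalues $\nu_k^{-1}$ of $T$, and a Davis--Kahan sine-theta estimate; this is a self-contained and valid alternative. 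One ingredient should, however, be made explicit: the one-sided inequality $\nu_k^{-1}\leq\lm[k]^{-1}$ for every $k$, which is exactly what excludes spurious eigenvalues. It follows not from either quantitative hypothesis but from the structural fact that $T$ is the compression of the positive operator $\Ke$ by the contraction $\Ss$: for any $k$-dimensional $V\subset\Ht$ on which $\Ss$ is injective one has $\min_{w\in V}(Tw,w)/\|w\|^2\leq\min_{u\in\Ss V}(\Ke u,u)/\|u\|^2\leq\lm[k]^{-1}$ because $\|\Ss w\|\leq\|w\|$, while if $\Ss|_V$ has kernel the minimum vanishes. Together with the lower bounds from the trial subspaces $\S(\Xj[1]+\cdots+\Xj[k])$ and Weyl, this pins exactly $\Jm$ eigenvalues of $\Kt$ into the target interval once $\eps_0$ is small; the Davis--Kahan step for part~(ii) is then routine, since by~(i) the spectral gap between the cluster and the remaining spectrum of $\Kt$ is bounded away from zero uniformly in $\eps$.

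A further remark in your favor: your decomposition $\B\vp=(\Kt-\S\Ke\Ss)\S\vp+\S\Ke(\Ss\S-I)\vp$ is cleaner than the paper's. The paper's verification of the bound on $\B\vp$ uses the identity $\S\Ss\S\vp=\S\vp$, which holds in the intended application (there $\Se$ and $\St$ are multiplications by indicator functions and hence commute) but fails for arbitrary subspaces $\He,\Ht\subset H$. Your algebraic split avoids this and yields the estimate in the abstract setting with no extra hypotheses, using only $\|(\Ss\S-I)\vp\|=\|\Se(\St\vp-\vp)\|\leq\|\vp-\S\vp\|\leq\eps^{1/2}\|\vp\|$ and the global bound on $\Kt-\S\Ke\Ss$.
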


\noindent We denote by~$\um[k]^{-1}$ for~$k=1,2,\ldots,\Jm[m]$, the eigenvalues of
the spectral problem~(\ref{eq:eig_k2}) located in the interval
$(\lm[m]^{-1} - c\eps^{1/2}, \, \lm[m]^{-1} + c\eps^{1/2})$, where~$c$ is the same constant
as in Lemma~\ref{l:mainprop}.
The quantity~$\rr$ is defined by
\begin{equation}
\label{eq:def_rho}
\rr = \sup_{\vp \in \Xj[m], \; \| \vp \| = 1} 
	\bigl( \lm \|\Kt \B \vp \|^2 + \eps \lm \| \B \vp \|^2 \bigr).
\end{equation}

\begin{theorem}
\label{t:asymp}
The following asymptotic formula holds\/{\rm:}
\begin{equation}
\label{eq:T2_um}
\um[k]^{-1} = \lm[m]^{-1} + \tk + O(\rr + |\tk|\eps) \quad
\mbox{for every } k = 1,2,\ldots,\Jm[m],
\end{equation}
where~$\tau = \tk$ is an eigenvalue of the problem
\begin{equation}
\label{eq:T2_tk}
\tau \ip{\S \vp}{\S \vs}{2} = \lm \ip{\B\vp}{\B\vs}{2} + \ip{\B \vp}{\S \vs}.
\quad \mbox{for all } \vs \in \Xj[m],
\end{equation}
where~$\vp \in \Xj[m]$. Moreover, $\tk[1],\tk[2],\ldots,\tk[m]$ in~{\rm(}\ref{eq:T2_um}{\rm)}
run through all eigenvalues of~{\rm(}\ref{eq:T2_tk}{\rm)} counting their multiplicities.
\end{theorem}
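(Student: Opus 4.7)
\medskip
\noindent
\textbf{Proof proposal.}
The plan is to isolate the $\Jm$-dimensional ``almost eigenspace'' $\S\Xj[m]$ afforded by Lemma~\ref{l:mainprop}, reduce the eigenvalue equation $\Kt U = \um^{-1} U$ to a finite-dimensional perturbation of~(\ref{eq:T2_tk}) on $\Xj[m]$, and then apply a min--max argument for symmetric bilinear forms to match eigenvalues with multiplicities.

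First I would pick an eigenvalue $\um^{-1}$ of $\Kt$ in the small interval supplied by Lemma~\ref{l:mainprop}(i), with normalized eigenvector $U$. Set $\tau = \um^{-1} - \lm^{-1}$ and decompose $U = \S\vp + w$, where $\S\vp = \Pm U \in \S\Xj[m]$ (which defines a unique $\vp \in \Xj[m]$ for $\eps$ small, because~(\ref{eq:req_Xm}) makes $\S$ near-isometric on $\Xj[m]$) and $w \perp \S\Xj[m]$ in $\Ht$. Lemma~\ref{l:mainprop}(ii) yields $\|w\| \leq C\eps^{1/2}$, so $\|\S\vp\|$ stays uniformly bounded below. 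Since $\Ke\vp = \lm^{-1}\vp$ and $\B = \Kt\S - \S\Ke$, we have $\Kt\S\vp = \lm^{-1}\S\vp + \B\vp$, and substituting the decomposition into $\Kt U = \um^{-1}U$ gives the working identity
\[
(\Kt - \um^{-1})\, w \;=\; \tau\, \S\vp - \B\vp.
\]
Pairing with $\S\vs$ for $\vs \in \Xj[m]$, moving $\Kt$ across by self-adjointness, applying the same relation to $\vs$, and using $w \perp \S\Xj[m]$ produces the reduced identity
\[
\tau\, \ip{\S\vp}{\S\vs}{2} \;=\; \ip{\B\vp}{\S\vs}{2} + \ip{w}{\B\vs}{2}.
\]

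To match this with~(\ref{eq:T2_tk}), I need $\ip{w}{\B\vs}{2} = \lm\ip{\B\vp}{\B\vs}{2} + O(\rr + |\tau|\eps)$. For this, apply $\Qm = I - \Pm$ to the working identity: since $w = \Qm w$ and $\Qm\S\vp = 0$, one obtains
\[
\bigl(I - \um\, \Qm \Kt \Qm\bigr)\, w \;=\; \um\, \Qm \B\vp
\]
on $\Qm\Ht$. The spectral gap from Lemma~\ref{l:mainprop} separates $\um^{-1}$ from the remaining spectrum of $\Qm\Kt\Qm$, so the operator on the left is invertible with uniformly bounded inverse. One Neumann iteration then gives
\[
w \;=\; \um\, \Qm \B\vp + \um^2\, \bigl(I - \um\, \Qm \Kt \Qm\bigr)^{-1}\, \Qm \Kt \Qm \B\vp,
\]
whose leading piece contributes $\um\ip{\Qm\B\vp}{\B\vs}{2} = \lm\ip{\B\vp}{\B\vs}{2} + O(|\tau|\,|\ip{\B\vp}{\B\vs}{2}|) - \um\ip{\Pm\B\vp}{\B\vs}{2}$, while the second piece is bounded by a multiple of $\|\Kt\B\vp\|\|\B\vs\|$, which is exactly the quantity packaged into $\rr$ via~(\ref{eq:def_rho}).

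It remains to absorb the cross term $\ip{\Pm\B\vp}{\B\vs}{2}$ and the $|\tau|$-type error into $O(\rr + |\tau|\eps)$. The standing bound $\|\B\vp\| = O(\eps^{1/2})\|\vp\|$ on $\Xj[m]$ follows from applying~(\ref{eq:def_eps}) to $w = \S\vp$ and the observation $\|\Ss\S\vp - \vp\| = O(\eps^{1/2})\|\vp\|$, itself a direct consequence of~(\ref{eq:req_Xm}). Combining~(\ref{eq:req_Xm}) with the reduced identity applied to a basis $\vp_1,\ldots,\vp_{\Jm}$ of $\Xj[m]$ then expresses $\Pm\B\vp$ in terms of the quantities showing up in $\rr$ and $|\tau|\eps$. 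Assembling everything, the reduced identity takes the form
\[
\tau\, \ip{\S\vp}{\S\vs}{2} \;=\; \ip{\B\vp}{\S\vs}{2} + \lm\, \ip{\B\vp}{\B\vs}{2} + R(\vp,\vs),
\]
with $|R(\vp,\vs)| \leq C(\rr + |\tau|\eps)\|\vs\|$. A standard min--max argument for the Hermitian form on the finite-dimensional space $\Xj[m]$ then pairs the $\Jm$ values of $\tau$ arising from the eigenvectors $U_k$ with the eigenvalues $\tk$ of~(\ref{eq:T2_tk}) counting multiplicities, proving~(\ref{eq:T2_um}). The main technical obstacle I anticipate is this last bookkeeping step: controlling both $\ip{\Pm\B\vp}{\B\vs}{2}$ and the $|\tau|\ip{\B\vp}{\B\vs}{2}$ contribution by the sharp thresholds $\rr$ and $|\tau|\eps$, without losing powers of $\eps$ or picking up spurious factors of $\lm$.
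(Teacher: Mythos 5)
Your reduction is essentially the same route the paper takes: decompose the eigenvector along $\S\Xj[m]$ and its orthogonal complement, solve for $w$ by inverting $\um^{-1} - \Qm\Kt\Qm$ on $\Qm\Ht$ (the paper packages this as the operator $L(\um)$ in its equation~(\ref{eq:aa}), which is exactly your Neumann-series term), and arrive at the reduced identity
\[
\tke\,\ip{\S\vp}{\S\vs}{2} = \lm\,\ip{\B\vp}{\Kt\S\vs}{2} + b(\vp,\vs),
\qquad |b(\vp,\vs)|\leq C\bigl(\rr(\vp)+\rr(\vs)+|\tke|\eps\bigr),
\]
which is identical to your display once you expand $\lm\ip{\B\vp}{\Kt\S\vs}{2}=\ip{\B\vp}{\S\vs}{2}+\lm\ip{\B\vp}{\B\vs}{2}$. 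Your control of $\ip{\Pm\B\vp}{\B\vs}{2}$ and $\ip{\Kt\Qm\B\vp}{\Qm\B\vs}{2}$ by $\rr(\vp)+\rr(\vs)$ is also what the paper does in its Sections~\ref{s:QmBvBs}--\ref{s:theorem_asymp_proof}.

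The genuine gap is the very step you flag at the end. Invoking ``a standard min--max argument'' to match the $\tke_j$ with the $\tk[j]$ counting multiplicities does not work out of the box here, because the reduced identity is not an exact eigenvalue problem for a single Hermitian form on $\Xj[m]$: the unknown $\tau$ also enters the error term $b(\vp,\vs)$ (both through $|\tke|\eps$ and through $L(\um)$, which depends on $\um$), and the bilinear form $\ip{\S\vp}{\S\vs}{2}$ on the left is only $\eps$-close to the reference inner product. Min--max comparisons of Rayleigh quotients would give you the crude estimate of Corollary~\ref{c:lipschitz}, but not the claimed remainder $O(\rr+|\tk|\eps)$, which is much smaller. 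The paper instead pairs eigenvalues one at a time: it picks $V_j$ with $\Pm U_j = \S V_j$ from the perturbed problem and $\Phi_j$ from the limiting problem~(\ref{eq:T2_tk}), arranges (after a possible permutation of the $\Phi_j$, referencing Section~4.5 of~\cite{Kozlov2013}) that $\ip{\S V_j}{\S\Phi_j}{2}\geq c_{*}>0$, evaluates the two eigenvalue identities at the crossed test pairs $(\vp,\vs)=(\Phi_j,V_j)$ and $(V_j,\Phi_j)$, and subtracts. The point of the subtraction is that the difference of main terms vanishes identically because $\lm\ip{\B\vp}{\Kt\S\vs}{2}$ is a symmetric form on $\Xj[m]$ (a consequence of $\Kt$ being self-adjoint and $\Ke\vp=\lm^{-1}\vp$), leaving only $b(V_j,\Phi_j)$ and the lower bound on $\ip{\S V_j}{\S\Phi_j}{2}$. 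You would need to supply this symmetry observation, the rearrangement lemma guaranteeing the nonvanishing pairing, and the subtraction argument to complete the proof; as written, the ``bookkeeping step'' you anticipate is not bookkeeping but the crux.
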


\noindent 
In applications, the term~$\|\Kt \B \vp \|$ is typically significantly smaller than~$\max |\tk|$; 
see, e.g., Lemma~\ref{l:est_KtB}. This implies that~$\rr$ is small compared to~$\tk$ for every~$k$.  
Note also that the right-hand side of~(\ref{eq:T2_tk}) can be expressed 
more compactly as~$\lm \ip{\B\vp}{\Kt\S\vs}{2}$.

The asymptotic formula in~(\ref{eq:T2_um}) has similarities to the one presented in Kozlov~\cite{Kozlov2006}.
The main difference is how the remainder term is constructed; in Theorem~\ref{t:asymp},~$\rr$ is typically small compared to the main terms
above. However, the same is not necessarily true in~\cite{Kozlov2006}.

\section{Proof of Lemma~\ref{l:mainprop}}
\label{s:proof_prop}
The following properties hold.

\begin{enumerate}[
label=(\Roman{*}),
ref=(\Roman{*}), 
]
\item $\ds (1 - \eps) \| \vp \|^2 \leq \| \S \vp \|^2 \leq \| \vp \|^2$ for every~$\vp \in \Xm$.
\label{prop:est_vp_svp}
\label{eq:est_vp_svp}
\item 
There exists a positive constant~$C$, depending on the operator norm of~$\Ke$, 
such that
\begin{equation}
\label{eq:est_B}
\| \B \vp \| \leq C \eps^{1/2} \| \vp \| \quad \mbox{for all } \vp \in \Xm.
\end{equation}
\label{prop:Bbounded}
\item $\ds \ip{\Kt w}{w}{2} \leq \ip{\Ke\Ss w}{\Ss w}{1} + \eps^{1/2} \| w \|^2 \quad$for all~$w \in \Ht$.
\label{prop:ip}
\label{last-item2}
\end{enumerate}
The inequality in~\ref{prop:est_vp_svp} follows from
\[
\| \S \vp \|^2  \geq \| \vp \|^2 - \| \vp - \S \vp\|^2 
\geq
(1 - \eps) \| \vp \|^2.
\]
To prove~\ref{prop:Bbounded}, suppose that~$\vp \in \Xm$. 
Then
\[
\begin{aligned}
\| \B \vp \| &\leq \| \B \Ss \S \vp \| + \| \B (\vp - \Ss \S \vp) \|\\
&\leq \| (\Kt - \S \Ke \Ss) \S \vp \| + \| \S \Ke (\vp - \Ss \S \vp) \|\\
&\leq \eps^{1/2} \| \S \vp \| + C \| \vp - \Ss \S \vp \|\\
&\leq C \eps^{1/2} \| \vp \|,
\end{aligned}
\]
where we used~(\ref{eq:def_eps}), (\ref{eq:req_Xm}), and the fact that~$\S \Ss \S \vp = \S \vp$.
The property in~\ref{prop:ip} follows from the fact that
\[
\begin{aligned}
\ip{\Kt w}{w}{2} - \ip{\Ke \Ss w}{\Ss w}{1}
&= 
\ip{(\Kt - \S \Ke \Ss) w}{w}{2} \\
&\leq \| (\Kt - \S \Ke \Ss) w\| \|w\|
 \leq \eps^{1/2} \| w \|^2.
\end{aligned}
\]
The arguments in Section~3.2, 3.3, and~3.4 of~\cite{Kozlov2013}, are now valid with small modifications.
Specifically, we substitute~$\S$ for the operator~$S_2$ in these sections, 
and replace inequality~(32) by~\ref{prop:est_vp_svp}.
Furthermore, the proof of inequality~(34) is analogous, 
inequality~(36) is replaced by~\ref{prop:Bbounded}, and finally, inequality~(39) is replaced by~\ref{prop:ip}.
This completes the proof of Lemma~\ref{l:mainprop}.

\section{Proof of Theorem~\ref{t:asymp}}
The proof of Theorem~\ref{t:asymp} mirrors that of the corresponding theorem in Kozlov~\cite{Kozlov2013}. 
Equation~(\ref{eq:Vp}) below corresponds to~(7) in~\cite{Kozlov2013}, but in this case we have
the explicit solution given in~(\ref{eq:solution_Vp}). In Sections~\ref{s:QmBvBs}--\ref{s:theorem_asymp_proof}, 
we provide results similar to the ones found in Section~4 of~\cite{Kozlov2013}. 

Let~$\Qm = I - \Pm$, where~$I$ is the identity operator on~$\Ht$, and suppose henceforth that~$\vp$
and~$\vs$ belong to~$\Xj[m]$.
To simplify the notation, define 
\begin{equation}
\label{eq:solution_Vp}
\Vp = -\lm \B \vp \quad \mbox{for any } \vp \in \Xj[m].
\end{equation}
Then~$\Vp$ solves the equation
\begin{equation}
\label{eq:Vp}
\ip{\Vp}{w}{2} = \ip{\vp}{w}{2} - \lm\ip{\S\vp}{\Kt w}{2}
\quad \mbox{for every } w \in \Ht.
\end{equation}
To verify~(\ref{eq:solution_Vp}), suppose that~$w \in \Ht$. Then
\[
\begin{aligned}
\ip{\B\vp}{w}{2} &= \ip{\Kt \S \vp}{w}{2} - \ip{\S \Ke \vp}{w}{2}\\
&= -\lm^{-1} \bigl( \ip{\S\vp}{w}{2} - \lm \ip{\S \vp}{\Kt w}{2} \bigr)\\
&= -\lm^{-1} \ip{\Vp}{w}{2}.
\end{aligned}
\]

\subsection{Representation of \boldmath{$\ip{\Qm \B \vp}{\B \vs}{2}$}}
\label{s:QmBvBs}
From~(\ref{eq:solution_Vp}) it follows that
\[
\ip{\Qm \B \vp}{\B \vs}{2} = 
\lm^{-2} \bigl( 
\ip{\Vp}{\Vs}{2} - \ip{\Pm \Vp}{\Vs}{2}
\bigr).
\]
Let~$\{\Upsilon_k\}_{k=1}^{\Jm}$ be an ON-basis in~$\S \Xj[m]$. Then, for each~$k=1,\ldots,\Jm$, there
exists~$\vp_k \in \Xj[m]$ such that~$\Upsilon_k = \S \vp_k$. Thus,
\begin{equation}
\label{eq:PmVpVs}
\ip{\Pm \Vp}{\Vs}{2} = \sum_{k=1}^{\Jm} \ip{\Vp}{\S \vp_k}{2}\ip{\S \vp_k}{\Vs}{2}.
\end{equation}
From~(\ref{eq:solution_Vp}) and~\ref{prop:Bbounded}, it is clear that
\[
| \ip{\Vp}{\S \vp_k}{2} | =
	\lm |\ip{\B \vp}{\S \vp_k}{2}| \leq
	\lm \|\B \vp \| \| \S \vp_k \| \leq
	C \eps^{1/2} \| \vp \|
\]
for~$k=1,\ldots,\Jm$.
Moreover, letting~$w = \S \vp_k$ in~(\ref{eq:Vp}) proves that
\begin{equation}
\label{eq:VpSvpk}
\begin{aligned}
\ip{\Vp}{\S\vp_k}{2} &=
\lm \ip{\Vp}{\Kt\S\vp_k}{2} + \ip{\Vp}{\Vpk}{2}\\ 
&= \lm \ip{\Kt \Vp}{\S\vp_k}{2} - \lm \ip{\Vp}{\B \vp_k}{2},
\end{aligned}
\end{equation}
from which it follows together with~\ref{eq:est_vp_svp} that
\begin{equation}
\label{eq:VpSvpk_est}
\begin{aligned}
|\ip{\Vp}{\S\vp_k}{2}| &\leq
\lm \bigl( \| \Kt \Vp \| \| \S \vp_k \| + \| \Vp\| \|\B \vp_k \| \bigr)\\
&\leq
C \bigl( \| \Kt \B \vp \|  + \eps^{1/2} \| \B \vp \| \bigr)
\end{aligned}
\end{equation}
Analogously,
\[
\ip{\Vs}{\S\vp_k}{2} = \lm \ip{\Kt \Vs}{\S \vp_k}{2} + \ip{\Vs}{\Vpk}{2},
\]
and thus
\begin{equation}
\label{eq:SvpkVs}
| \ip{\S \vp_k}{\Vs}{2} | \leq C \bigl( \| \Kt \B \vs \|  + \eps^{1/2} \| \B \vs \| \bigr).
\end{equation}
Now, the identity in (\ref{eq:PmVpVs}), and the estimates in~(\ref{eq:VpSvpk_est}) and~(\ref{eq:SvpkVs}), imply that
\begin{equation}
\label{eq:est_PmBvpBvs}
| \ip{\Pm\Vp}{\Vs}{2} | = \lm^2 |\ip{\Pm\B\vp}{\B\vs}{2}| \leq C \bigl( \rr(\vp) + \rr(\vs) \bigr),
\end{equation}
where
\[
\rr(\vp) = 
 \lm \bigl( \|\Kt \B \vp \|^2 + \eps \| \B \vp \|^2 \bigr),
\quad \vp \in \Xj[m].
\]

\subsection{Estimate of~\boldmath{$\ip{\Kt \Qm \B \vp}{\Qm \B \vs}{2}$}}
Since~$\Pm + \Qm = I$, it is clear that
\[
\begin{aligned}
\ip{\Kt \Qm \B \vp}{\Qm \B \vs}{2} = {} &
\ip{\Kt \B \vp}{\Qm \B \vs}{2} -
\ip{\Kt \Pm \B \vp}{\Qm \B \vs}{2}\\
= {} &
\ip{\Kt \B \vp}{\Qm \B \vs}{2} -
\ip{\Kt \Pm \B \vp}{\B \vs}{2}\\
&+ \ip{\Kt \Pm \B \vp}{\Pm \B \vs}{2}.
\end{aligned}
\]
Now,
\[
|\ip{\Kt \B \vp}{\Qm  \B \vs}{2}|
\leq
\| \Kt \B \vp \| \| \Qm \B \vs\|
\leq
C \eps^{1/2} \| \vs \| \| \Kt \B \vp\|.
\]
Similarly, 
\[
|\ip{\Kt \Pm \B \vp}{\B \vs}{2}|
=
|\ip{\Pm \B \vp}{\Kt \B \vs}{2}|
\leq
C \eps^{1/2} \| \vp \| \| \Kt \B \vs\|.
\]
As in Section~\ref{s:QmBvBs},
let~$\{\Upsilon_k\}_{k=1}^{\Jm}$ be an ON-basis in~$\S \Xj[m]$. Then
there exists eigenfunctions~$\vp_k \in \Xj[m]$ such that~$\Upsilon_k = \S \vp_k$
for every~$k=1,\ldots,\Jm$.
Thus,
\begin{equation}
\label{eq:KtPmBvp}
\ip{\Kt \Pm \B \vp}{\Pm \B \vs}{2} = \sum_{k=1}^{\Jm} \ip{\Pm \B \vp}{\S \vp_k}{2}\ip{\Kt \S \vp_k}{\Pm \B \vs}{2}.
\end{equation}
Using~(\ref{eq:solution_Vp}) and~(\ref{eq:VpSvpk_est}), it is clear that
\[
|\ip{\Pm \B \vp}{\S \vp_k}{2}| 
\leq 
|\ip{\B \vp}{\S \vp_k}{2}| 
\leq
C \bigl( \| \Kt \B \vp \| + \eps^{1/2} \| \B \vp \| \bigr).
\]
Furthermore,~(\ref{eq:solution_Vp}) and~(\ref{eq:VpSvpk_est}), with~$\vp_l$ in the place of~$\vp_k$
and~$\vs$ replaced by~$\vp$, proves that
\[
\begin{aligned}
|\ip{\Kt \S \vp_k}{\Pm \B \vs}{2}|
&=
\biggl| \sum_{l=1}^{\Jm} \ip{\Kt \S \vp_k}{\S \vp_l}{2} \ip{\Pm \B \vs}{\S \vp_l}{2} \biggr|\\
&\leq
C \sum_{l=1}^{\Jm} |\ip{\Pm \B \vs}{\S \vp_l}{2}|\\
&\leq C \bigl( \| \Kt \B \vs \| + \eps^{1/2} \| \B \vs \| \bigr).
\end{aligned}
\]
Thus,
\[
\bigl( \| \Kt \B \vp \| + \eps^{1/2} \| \B \vp \| \bigr)
		\bigl( \| \Kt \B \vs \| + \eps^{1/2} \| \B \vs \| \bigr)
\leq C \bigl( \rr(\vp) + \rr(\vs) \bigr).
\]
Finally, we obtain that
\begin{equation}
\label{eq:est_KtQmBvpQmBvs}
|\ip{\Kt \Qm \B \vp}{\Qm \B \vs}{2}|
\leq
C \bigl( \rr(\vp) + \rr(\vs) \bigr).
\end{equation}

\subsection{Proof of Theorem~\ref{t:asymp}}
\label{s:theorem_asymp_proof}
Analogously with the argument used in Kozlov~\cite{Kozlov2013}, it is possible to reduce the spectral
problem~(\ref{eq:eig_k2}) to a finite dimensional situation using the projectors~$\Pm$ and~$\Qm$:
\begin{equation}
\label{eq:k2_t2}
(\um[{}]^{-1} - \Kt)(\S \vp + w) = 0,
\end{equation}
where~$\vp \in \Xj[m]$ and~$w \in \Qm \Ht$.
Indeed, proceeding accordingly with Section~4.1 in~\cite{Kozlov2013}, we obtain that
\begin{equation}
\label{eq:aa}
\tke \ip{\S \vp}{\S \vs}{2}
- \ip{\B \vp}{\S \vs}{2} 
- \um[{}] \ip{\Qm\B\vp}{\B\vs}{2}
- \ip{L(\um[{}])\B\vp}{\B\vs}{2} = 0,
\end{equation}
where $L(\um[{}]) = \um[{}] \Qm \Kt \Qm \bigl( \um[{}]^{-1} - \Qm \Kt \Qm \bigr)^{-1} \Qm$
and~$\tke = \um[{}]^{-1} - \lm^{-1}$.
We assume that~$|\tke| \leq \eps^{1/2}$.
Moreover, the operator~$\bigl( \um[{}]^{-1} - \Qm \Kt \Qm \bigr)^{-1}$ is bounded
from~$\Qm \Ht$ into~$\Qm \Ht$:
\[
\| \bigl( \um[{}]^{-1} - \Qm \Kt \Qm \bigr)^{-1} w \|_{\Qm \Ht}
\leq C \| w \|_{\Qm \Ht}
\quad \mbox{for every } w \in \Ht.
\]
Hence,
\[
|\ip{L(\um[{}])\B \vp}{\B \vs}{2}| \leq C \ip{\Kt \Qm \B \vs}{\Qm \B \vs}{2}.
\]
It follows from the identity~$\um[{}]^{-1} = \lm^{-1} + \tke$ that
\[
\um[{}] \ip{\Qm\B\vp}{\B\vs}{2} = \lm\ip{\B \vp}{\B \vs}{2} - b_2(\vp,\vs),
\]
where
\[
b_2(\vp,\vs) = \frac{\lm\tke}{\tke + \lm^{-1}} \ip{\Qm\B\vp}{\B\vs}{2} + \um[{}]\ip{\Pm \B \vp}{\B \vs}{2}.
\]
Then
\begin{equation}
\label{eq:est_b_2}
|b_2(\vp,\vs)| \leq C |\tke| \eps + C \bigl( \rr(\vp) + \rr(\vs) \bigr).
\end{equation}
Put~$b(\vp,\vs) = \ip{L(\um[{}])\B\vp}{\B\vs}{2} + b_2(\vp,\vs)$. Then
\begin{equation}
\label{eq:aaa}
\tke
\ip{\S \vp}{\S \vs}{2} = \lm \ip{\B \vp}{\Kt \S \vs}{2} + b(\vp,\vs),
\end{equation}
where~$b(\vp,\vs)$ satisfies
\begin{equation}
\label{eq:est_b}
|b(\vp,\vs)| \leq C \bigl( \rr(\vp) + \rr(\vs) + |\tke| \eps \bigr)
\end{equation}
according to~(\ref{eq:est_b_2}) and~(\ref{eq:est_KtQmBvpQmBvs}).

Suppose that~$j = 1,\ldots,\Jm$.
Let~$U_j \in \Ht$ be an eigenfunction of~$\Kt$ corresponding to the eigenvalue~$\um[j]^{-1}$.
Then there exists~$V_j \in \Xj[m]$ satisfying~$\Pm U_j = \S V_j$. 
By~$\tke_j$ we denote an eigenvalue of~(\ref{eq:aaa}) with eigenfunction~$\vp = V_j$.
Suppose also
that~$\tk$ is an eigenvalue of~(\ref{eq:T2_tk}) and~$\Phi_j \in \Xj[m]$ a corresponding eigenfunction.
Analogously with Section~4.5 in Kozlov~\cite{Kozlov2013}, we may assume that there exists a constant~$c_{*} > 0$
such that
\begin{equation}
\label{eq:VjPj}
\ip{\S V_j}{\S \Phi_j}{2} \geq c_{*}
\end{equation}
after possible rearrangement of the eigenfunctions~$\Phi_j$ spanning~$\Xj[m]$.

Choosing~$\vp = \Phi_j$ and~$\vs = V_j$ in equation~(\ref{eq:T2_tk}),
and~$\vp = V_j$ and~$\vs = \Phi_j$ in equation~(\ref{eq:aaa}), and then
subtracting~(\ref{eq:T2_tk}) from~(\ref{eq:aaa}), we obtain that
\[
(\tke_j - \tk[j])\ip{\S V_j}{\S \Phi_j}{2} = \lm \bigl( \ip{\B V_j}{\Kt\S\Phi_j}{2} - \ip{\B\Phi_j}{\Kt\S V_j}{2} \bigr)
+ b(V_j,\Psi_j).
\]
The fact that~$\Kt$ is self-adjoint, that~$\Phi_j$ and~$V_j$ belong to~$\Xj[m]$, and the definition of~$B$, imply that
\[
\ip{\B V_j}{\Kt\S\Phi_j}{2} - \ip{\B\Phi_j}{\Kt\S V_j}{2}
=
\lm^{-1} \bigl( \ip{\S \Phi_j}{\Kt \S V_j}{2} - \ip{\S V_j}{\Kt \S \Phi_j}{2} \bigr) = 0.
\]
Hence,
\[
(\tke_j - \tk[j])\ip{\S V_j}{\S \Phi_j}{2} = b(V_j,\Psi_j),
\]
from which it follows from~(\ref{eq:est_b}) and~(\ref{eq:VjPj}) that
\[
|\tke_j - \tk[j]| \leq C \bigl( \rr(V_j) + \rr(\Psi_j) + |\tke_j|\eps \bigr).
\]
Taking the supremum over~$V_j$ and~$\Psi_j$ in~$\Xj[m]$ with~$\| V_j \| = \| \Psi_j \| = 1$, 
we obtain that
\[
|\tke_j - \tk[j]| \leq C \bigl( \rr + |\tke_j|\eps \bigr),
\]
where
\[
\rr = \sup_{\stackrel{\vp \in \Xj[m]}{\| \vp \| = 1}} \rr(\vp)
=
\lm \sup_{\stackrel{\vp \in \Xj[m]}{\| \vp \| = 1}} \bigl( \|\Kt \B \vp \|^2 + \eps \| \B \vp \|^2 \bigr).
\]
This also implies that
\[
|\tke_j - \tk[j]| \leq C \bigl( \rr + |\tk[j]|\eps \bigr),
\]

%%%%%%%%%%%%%%%%%%%%%%%%%%%%%%%%%%%%%%%%%%%%%%%%%%%%%%%%%%%%%%%%
%%%%%%%%%%%%%%%%%%%%%%%%%%%%%%%%%%%%%%%%%%%%%%%%%%%%%%%%%%%%%%%%
%%%%%%%%%%%%%%%%%%%%%%%%%%%%%%%%%%%%%%%%%%%%%%%%%%%%%%%%%%%%%%%%
%%%%%%%%%%%%%%%%%%%%%%%%%%%%%%%%%%%%%%%%%%%%%%%%%%%%%%%%%%%%%%%%
%%%%%%%%%%%%%%%%%%%%%%%%%%%%%%%%%%%%%%%%%%%%%%%%%%%%%%%%%%%%%%%%
%%%%%%%%%%%%%%%%%%%%%%%%%%%%%%%%%%%%%%%%%%%%%%%%%%%%%%%%%%%%%%%%
%%%%%%%%%%%%%%%%%%%%%%%%%%%%%%%%%%%%%%%%%%%%%%%%%%%%%%%%%%%%%%%%
%%%%%%%%%%%%%%%%%%%%%%%%%%%%%%%%%%%%%%%%%%%%%%%%%%%%%%%%%%%%%%%%

\section{Applications}
In this section we consider the Neumann problem for the operator~$1 - \Delta$ in different domains.
Let~$\Oe$ and~$\Ot$ be two domains in~$\R^n$ with nonempty intersection.
We put~$\H = L^2(\R^n)$ and~$\Hj = L^2(\Oj)$ for~$j = 1,2$. Functions in~$\Hj$ are extended to~$\R^n$ by zero
outside of~$\Oj$.
Observe that we do not require that one subdomain~$\Oj$ is a subset of the other.
For~$f \in L^2(\Oj)$,
the weak solution to the Neumann problem~$(1 - \Delta) W_j = f$ in~$\Oj$ and~$\partial_{\nu} W_j = 0$ on~$\partial \Oj$
for~$j=1,2$ satisfies 
\begin{equation}
\label{eq:weak_neumann}
\int_{\Oj} ( \nabla W_j \nabla v  + W_j v ) \, dx = \int_{\Oj} f  v \, dx
\quad \mbox{for every } v \in H^1(\Oj).
\end{equation}
It follows from~(\ref{eq:weak_neumann}) with~$v = W_j$ and the Cauchy-Schwarz inequality that
\[
\| \nabla W_j \|_{L^2(\Oj)} + \| W_j \|_{L^2(\Oj)} \leq \| f \|_{L^2(\Oj)}.
\]
We let~$\Kj$ for~$j=1,2$ be defined on~$L^2(\Oj)$ as the solution operators corresponding to the domains~$\Oj$,
i.e.,~$\Kj f = W_j$.
Then~$\Kj$ maps~$L^2(\Oj)$ into the Sobolev space~$H^1(\Oj)$, and
\[
\| \Kj u \|_{H^1(\Oj)} \leq C \| u \|_{L^2(\Oj)}.
\]
Moreover,~$(1 - \Delta) \Kj u = u$ and~$\partial_{\nu} \Kj u = 0$ on~$\partial \Oj$ in the weak sense.
The operators~$\Kj$ are also self-adjoint and positive definite, and if~$\Oj$ are, e.g., Lipschitz, also compact.

To characterize how close the two domains are,
we will use the Hausdorff distance~$d$ between the sets~$\Oe$ and~$\Ot$ given in~(\ref{eq:d_def}).

\subsection{Perturbations of Lipschitz- and~$C^{1,\alpha}$-Domains}
\label{s:domains}
We now consider two cases of regularity of the boundaries~$\partial \Oj$, namely~$C^{1,\alpha}$
and Lipschitz boundaries. Let us first consider the Lipschitz case. 
Let~$\Oe$ be the reference domain which will be fixed throughout. 
Then there exists a positive constant~$M$ such that the boundary~$\partial \Oe$ can be covered by
a finite number of balls~$B_k$,~$k=1,2,\ldots,N$, where there exists orthogonal coordinate systems in which
\[
B_k \cap \Omega_1= B_k \cap \{ y = (y', y_n) \sa y_n > h_k^{(1)}(y')\}
\]
where the center of~$B_k$ is at the origin and~$h_k^{(1)}$ 
are Lipschitz functions, i.e., 
\[
|h_k^{(1)}(y') - h_k^{(1)}(x')| \leq M |y' - x'|,
\]
such that~$h_k^{(1)}(0) = 0$.
We assume that~$\Ot$ belongs to the class of domains where~$\Ot$ is close to~$\Oe$ in the sense that~$\Ot$ can be described by
\[
B_k \cap \Omega_2= B_k \cap \{ y = (y', y_n) \sa y_n > h_k^{(2)}(y')\},
\]
where~$h_k^{(2)}$ are also Lipschitz continuous with Lipschitz constant~$M$.
Clearly all such domains belong to a ball~$\Dd$ of sufficiently large radius depending only on~$M$ and~$B_1,B_2,\ldots,B_N$. 
Note also that~$\Oe \cap \Ot$ is a Lipschitz domain of this type and that we can use the same covering and Lipschitz constant.

\begin{remark}
Observe that~$d$ is comparable to
\[
\widehat{d} = \max_{k=1,2,\ldots,N} \sup \{ | h_k^{(1)}(y') - h_k^{(2)}(y')| \sa y = (y',y_n) \in B_k \cap \partial \Oe \} 
\]
in the sense that there exists positive constants~$c_1$ and~$c_2$ depending only on~$M$ and~$B_k$,~$k=1,2,\ldots,N$, such that
$ c_1 \widehat{d} \leq d \leq c_2 \widehat{d}$.
\end{remark}
\noindent
The case of a~$C^{1,\alpha}$ domain is defined in the same manner, with the additional assumptions
that~$h_k^{(1)}$ are~$C^{1,\alpha}$-functions such 
that
\[
h_k^{(1)}(0) = \partial_{x_i} h_k^{(1)} = 0, \quad i=1,2,\ldots,n-1.
\] 
Moreover, we suppose that
\begin{equation}
\label{eq:grad_hk}
| \nabla ( h_k^{(1)} - h_k^{(2)}) | \leq C d^{\alpha}.
\end{equation}
Note that~$h_k^{(2)}$ are only assumed to be Lipschitz continuous and satisfy~(\ref{eq:grad_hk}).
It is also worth noting that these domains
constitute a subset of the class of Lipschitz domains used in Section~\ref{s:def_lipdom}. Thus, results
that hold for Lipschitz domains are also valid for this class of domains.

\newcommand{\Om}{\ensuremath{\Omega}}

\subsection{Lipschitz Domains}
\label{s:def_lipdom}
Solutions to elliptic partial differential equations in Lipschitz domains often belong to Hardy-type spaces.
Let~$\Om$ be a Lipschitz domain.
The truncated cones~$\Gamma(x')$ at~$x' \in \partial \Om$ are given by, e.g.,
\[
\Gamma(x') = \{ x \in \Om \sa  |x - x'| < 2 \mbox{dist}(x, \partial \Om) \}
\]
and the non-tangential maximal function is defined on the boundary~$\partial \Om$ by
\[
\N(u)(x') = \max_{k=1,2,\ldots,N} \sup \{ |u(x)| \sa x \in \Gamma(x') \cap B_k \} .
\]
The non-tangential convergence of~$u(x)$ to some number~$u(x')$ is defined as
\[
\lim_{\Gamma(x') \ni x \rightarrow x'} u(x) = u(x'), \quad x' \in \partial \Om,
\]
provided that the limit exists. Thus only approaches inside the cone~$\Gamma(x')$ are considered.
Let~$n(x')$ denote the normal vector at~$x'$ and
furthermore, if~$T$ is any tangential vector of~$\Om$ at~$x'$, the tangential gradient~$\nabla_T u$
with respect to~$T$ is defined as~$\nabla u \cdot T$. We refer to Kenig~\cite{Kenig1994} for further details.
The next two lemmas consists of known results which we prove for completeness sake.

\begin{lemma}
\label{l:lap_u_hom}
If~$g \in L^2(\partial \Om)$, where~$\Om \subset \Dd$ is a Lipschitz domain, then there exists a unique 
function~$u \in H^1(\Om)$
such that~$(1 - \Delta)u = 0$ in~$\Om$ and~$\partial_{\nu} u = g$ on~$\partial \Om$ in the 
sense that~$n \cdot \nabla u \rightarrow g$
nontangentially at almost every point on~$\partial \Om$, where~$n$ is the outwards normal. Moreover,
\[
\| \N(u) \|_{L^2(\partial \Om)} + \| \N(\nabla u) \|_{L^2(\partial \Om)} \leq C \| g \|_{L^2(\partial \Om)},
\]
where the constant~$C$ depends only on~$M$ and~$B_1,B_2,\ldots,B_N$
and the tangential gradient~$\nabla_T u$
exists in~$L^2(\partial \Om)$ in the sense of a weak limit in~$L^2$ of mean value 
integrals~$(\nabla_T u)_r$ 
{\rm(}see Section~1.8 of Kenig~\cite{Kenig1994}\/{\rm)}. 
\end{lemma}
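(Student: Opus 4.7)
The plan is to prove uniqueness by a standard energy argument and existence by a single layer potential adaptation of the Verchota/Jerison--Kenig $L^2$-theory on Lipschitz domains. For uniqueness, if $u_1, u_2 \in H^1(\Om)$ are two weak solutions with the same Neumann datum, the difference $w = u_1 - u_2$ satisfies $(1-\Delta)w = 0$ in $\Om$ with $\partial_\nu w = 0$, so testing the weak formulation~(\ref{eq:weak_neumann}) against $w$ itself yields
\[
\int_\Om (|\nabla w|^2 + w^2) \, dx = 0,
\]
whence $w \equiv 0$. The mass term ensures that no compatibility condition on $g$ is needed, in contrast to the pure Neumann problem for $-\Delta$.

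For existence I would set $u = \mathcal{S}\phi$, the single layer potential of a density $\phi \in L^2(\partial \Om)$ against the fundamental solution of $1 - \Delta$ on~$\R^n$ --- the Bessel/Yukawa kernel, which is smooth off the diagonal, decays exponentially at infinity, and differs from the Newton kernel by a smoother remainder on the diagonal. The Calder\'on commutator estimates of Coifman--McIntosh--Meyer and the subsequent Lipschitz-domain theory, as set out in Kenig~\cite{Kenig1994}, therefore apply verbatim: the interior non-tangential limit formula $\partial_\nu^-(\mathcal{S}\phi) = (-\tfrac{1}{2}I + K^*)\phi$ holds in $L^2(\partial \Om)$, and the operator $-\tfrac{1}{2}I + K^*$ is invertible on all of $L^2(\partial \Om)$ --- the mass term excludes the constants that form the kernel in the pure $-\Delta$ case, consistent with the uniqueness already established. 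Setting $\phi = (-\tfrac{1}{2}I + K^*)^{-1} g$ produces the required solution.

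The non-tangential maximal bounds then follow from the standard Lipschitz-domain estimates
\[
\|\N(\mathcal{S}\phi)\|_{L^2(\partial \Om)} + \|\N(\nabla \mathcal{S}\phi)\|_{L^2(\partial \Om)} \leq C\|\phi\|_{L^2(\partial \Om)},
\]
combined with the $L^2 \to L^2$ norm bound on $(-\tfrac{1}{2}I + K^*)^{-1}$; the constant depends only on $M$ and the covering $B_1,\ldots,B_N$ since $\Om \subset \Dd$. The tangential gradient $\nabla_T u$ exists as a weak $L^2(\partial \Om)$-limit of the mean-value averages $(\nabla_T u)_r$ because $\N(\nabla u) \in L^2(\partial \Om)$ provides a uniform majorant and the non-tangential convergence of $\nabla u$ almost everywhere makes the net Cauchy in $L^2$. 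The principal technical point is verifying that the spectral structure underlying Verchota's invertibility survives the passage from $-\Delta$ to $1 - \Delta$; this follows because the kernel-level perturbation induces a compact correction to the boundary integral operator on $L^2(\partial \Om)$, and the uniqueness step above excludes $-\tfrac{1}{2}$ from its spectrum.
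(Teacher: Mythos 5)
Your proof is correct in outline but takes a genuinely different route from the paper. The paper's argument is a reduction device: starting from the weak $H^1$ solution $u$, it extends $u$ to $\widetilde{u}\in H^1(D)$ with compact support and writes $u = u_0 + u_1$, where $u_0 \in H^2(D)$ solves the Dirichlet problem $\Delta u_0 = \widetilde{u}$, $u_0|_{\partial D} = 0$ on the large ball $D$. The elliptic $H^2$ regularity on the smooth ball gives full control of $u_0$ and its trace, while $u_1$ is genuinely harmonic ($\Delta u_1 = 0$) with $L^2$ Neumann datum $h = \partial_\nu u - \partial_\nu u_0$ satisfying $\int_{\partial\Om} h\,dS = 0$ (checked via Green's formula). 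The paper can then quote Jerison--Kenig for the pure harmonic Neumann problem on a Lipschitz domain verbatim, with no modification of the layer potential theory. Your approach instead attacks $1-\Delta$ head-on: you construct a single layer potential with the Yukawa kernel, invoke the Coifman--McIntosh--Meyer machinery for the leading Newtonian singularity, and treat the Yukawa--Newton difference as a compact perturbation to get Fredholmness of index zero for $-\tfrac12 I + K^*$, with injectivity supplied by uniqueness. Both routes are legitimate; yours is self-contained in spirit but requires re-developing (or carefully citing) the layer potential theory for the shifted operator, whereas the paper's decomposition sidesteps that entirely by recycling the existing harmonic result plus basic $H^2$ theory on a ball. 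One small caveat on your injectivity step: the interior uniqueness argument by itself only gives $\mathcal{S}\phi = 0$ inside $\Om$; to conclude $\phi = 0$ you also need the continuity of $\mathcal{S}\phi$ across $\partial\Om$, uniqueness for the exterior Dirichlet problem, and the jump relation for the normal derivative. This is standard, but it should be stated — "the uniqueness step above excludes $-\tfrac12$ from its spectrum" compresses it too much.
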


\begin{proof}
The problem~$(1 - \Delta) u = 0$ in~$\Om$ and~$\partial_{\nu} w = g$ on~$\partial \Om$ 
has a weak solution~$w \in H^1(\Om)$ for every~$g \in L^2(\partial \Om)$ such that
\[
\| u \|_{H^1(\Om)} \leq C \| g \|_{L^2(\partial \Om)}, 
\]
where~$C$ is independent of~$g$ and~$u$. Let us extend~$u$ to a function~$\widetilde{u} \in H^1(\BR)$
with compact support 
such that~$\| \widetilde{u} \|_{H^1(\BR)} \leq C \| u \|_{H^1(\Om)}$. Put~$u = u_0 + u_1$,
where~$\Delta u_0 = \widetilde{u}$ on~$\BR$ and~$u_0 = 0$ on~$\partial \BR$. 
Then~$u_0 \in H^2(\BR)$ and
\[
\| u_0 \|_{H^2(\BR)} \leq C \|g\|_{L^2(\partial \Om)}.
\]
We also obtain that~$\Delta u_1 = 0$ in~$\Om$ and~$\partial_{\nu} u_1 = h$
with~$h = \partial_{\nu} u - \partial_{\nu} u_0$ 
satisfying~$\| h \|_{L^2(\partial \Om)} \leq C \| g \|_{L^2(\partial \Om)}$.

Suppose that~$U = 1$. 
Then~$\Delta U = 0$ and~$U = 1$ on~$\partial \Om$, and by Green's formula,
\[
\begin{aligned}
\int_{\partial \Om} \bigl( \partial_{\nu} u - \partial_{\nu} u_0 \bigr) U dS
& = 
\int_{\Om} \bigl( \nabla( u - u_0 ) \cdot \nabla U + \Delta(u - u_0) U \bigr) dx\\
& = 
\int_{\Om} \bigl( u - \widetilde{u} \bigr) dx = 0.
\end{aligned}
\]
The homogeneous Neumann problem~$\Delta u_1 = 0$ in~$\Om$ with~$\partial_{\nu} u_1 = h$ on~$\partial \Om$ 
has a unique solution~$u_1 \in H^1(\Om)$ such that
\begin{equation}
\label{eq:l:est_u1}
\| \N(u_1) \|_{L^2(\partial \Om)} + \| \N(\nabla u_1) \|_{L^2(\partial \Om)} 
	\leq C \| g \|_{L^2(\partial \Om)},
\end{equation}
where~$\N$ is the non-tangential maximal function; see Jerison and Kenig~\cite{Jerison1981}. 
Now,~(\ref{eq:l:est_u1}) and the fact that~$u = u_0 + u_1$ imply that 
\begin{equation*}
\| \N(u) \|_{L^2(\partial \Om)} + \| \N(\nabla u) \|_{L^2(\partial \Om)} 
	\leq C \| g \|_{L^2(\partial \Om)}. 
\end{equation*}
For the convergence of the tangential gradient, see Kenig~\cite{Kenig1994}.
\end{proof}

\begin{lemma}
\label{l:defK}
If~$f \in L^2(\Om)$, where~$\Om \subset \Dd$ is a Lipschitz domain, then there exists a unique function~$u \in H^1(\Om)$
such that~$(1 - \Delta)u = f$ in~$\Om$, and~$\partial_{\nu} u = 0$ on~$\partial \Om$ in the nontangential sense. Moreover,
\begin{equation}
\label{eq:est_max_sol}
\| \N(u) \|_{L^2(\partial \Om)} + \| \N(\nabla u) \|_{L^2(\partial \Om)} \leq C \| f \|_{L^2(\Om)},
\end{equation}
where the constant~$C$ depends only on~$M$ and~$B_1,B_2,\ldots,B_N$.
\end{lemma}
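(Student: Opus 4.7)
The plan is to follow the splitting strategy used in the proof of Lemma~\ref{l:lap_u_hom}, transferring the non-zero data from the boundary to the interior. Existence and uniqueness of a weak solution $u \in H^1(\Om)$ to $\int_\Om (\nabla u \cdot \nabla v + u v)\, dx = \int_\Om f v\, dx$ for all $v \in H^1(\Om)$ follow from the Lax--Milgram theorem applied to the coercive form $a(u,v) = \int_\Om (\nabla u \cdot \nabla v + u v)\,dx$ on $H^1(\Om)$, together with the energy bound $\|u\|_{H^1(\Om)} \leq \|f\|_{L^2(\Om)}$. The Neumann condition is at this stage only variational; the task is to upgrade it to nontangential convergence a.e.\ and to establish~(\ref{eq:est_max_sol}).

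To construct the splitting I would extend $u$ to $\widetilde u \in H^1(\Dd)$ with compact support and $\|\widetilde u\|_{H^1(\Dd)} \leq C\|u\|_{H^1(\Om)}$, and extend $f$ by zero to $\widetilde f \in L^2(\Dd)$. Let $u_0$ solve the Dirichlet problem $\Delta u_0 = \widetilde u - \widetilde f$ in $\Dd$ with $u_0 = 0$ on $\partial \Dd$. Since $\Dd$ is a ball, elliptic regularity gives $u_0 \in H^2(\Dd)$ with $\|u_0\|_{H^2(\Dd)} \leq C\|f\|_{L^2(\Om)}$. Setting $u_1 = u - u_0 \in H^1(\Om)$ and using $\Delta u = u - f$ in $\Om$ together with $\widetilde u|_\Om = u$ and $\widetilde f|_\Om = f$, we find $\Delta u_1 = 0$ in $\Om$. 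A Green's identity using the weak equation for $u$ yields, for every $v \in H^1(\Om)$, $\int_\Om \nabla u_1 \cdot \nabla v\, dx = -\int_{\partial \Om} v\, \partial_\nu u_0\, dS$, so $u_1$ is harmonic in $\Om$ with weak Neumann datum $h := -\partial_\nu u_0 \in L^2(\partial \Om)$ and $\|h\|_{L^2(\partial \Om)} \leq C\|u_0\|_{H^2(\Dd)} \leq C\|f\|_{L^2(\Om)}$; the compatibility $\int_{\partial \Om} h\, dS = 0$ follows by testing the weak equation for $u$ against $v \equiv 1$.

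With $h \in L^2(\partial \Om)$ of mean zero, the Jerison--Kenig theory~\cite{Jerison1981} for the harmonic Neumann problem on Lipschitz domains identifies $u_1$ (up to an additive constant controlled by $C\|f\|_{L^2(\Om)}$) with the solution satisfying $\|\N(u_1)\|_{L^2(\partial \Om)} + \|\N(\nabla u_1)\|_{L^2(\partial \Om)} \leq C\|h\|_{L^2(\partial \Om)}$ and $n \cdot \nabla u_1 \to h$ nontangentially a.e.\ on $\partial \Om$. For $u_0$, the $H^2(\Dd)$ regularity combined with $\Om \subset \Dd$ gives the analogous nontangential maximal bounds for $u_0$ and $\nabla u_0$ on $\partial \Om$, also controlled by $C\|f\|_{L^2(\Om)}$. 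Summing the two pieces yields~(\ref{eq:est_max_sol}) together with $n \cdot \nabla u = n \cdot \nabla u_0 + n \cdot \nabla u_1 \to -h + h = 0$ nontangentially almost everywhere on $\partial \Om$.

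The main obstacle is the nontangential maximal control of $\nabla u_0$ on the Lipschitz boundary $\partial \Om$: although the trace of $\nabla u_0$ lies in $H^{1/2}(\partial \Om) \subset L^2(\partial \Om)$, the nontangential maximal function is a strictly stronger object, and the required bound must be obtained by combining a cone-wise Hardy--Littlewood-type maximal inequality with the Sobolev regularity of $u_0$, with constants depending only on $M$ and the covering $B_1,\ldots,B_N$.
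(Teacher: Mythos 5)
Your proof is correct, but it takes a slightly different route from the paper's, which is worth noting. The paper proves Lemma~\ref{l:defK} by a clean reduction to Lemma~\ref{l:lap_u_hom}: extend $f$ by zero, solve $(1-\Delta)v = \widetilde{f}$ in $\Dd$ with $v=0$ on $\partial\Dd$ to get $v\in H^2(\Dd)$, and set $u = v + w$ where $(1-\Delta)w = 0$ with $\partial_\nu w = -\partial_\nu v \in L^2(\partial\Om)$; then Lemma~\ref{l:lap_u_hom} is applied verbatim to $w$. Crucially, this auxiliary $v$ depends only on $f$ and not on the (a priori unknown) solution $u$, so the construction is direct. You instead inline the internal argument of Lemma~\ref{l:lap_u_hom} rather than invoking it as a black box: you first secure the $H^1$-weak solution $u$ via Lax--Milgram, extend $u$ itself, solve $\Delta u_0 = \widetilde u - \widetilde f$ in $\Dd$, and reduce to the harmonic Neumann problem $\Delta u_1 = 0$ with $L^2$ data. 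This is a genuinely different decomposition (your auxiliary problem depends on $u$ and uses $\Delta$ rather than $1-\Delta$), but it is logically sound and lands on the same key external input, the Jerison--Kenig theory. The compatibility check $\int_{\partial\Om} h\,dS = 0$ via the divergence theorem and the weak equation tested against the constant is correct. The price of your route is the extra step (extending $u$, which requires a Lipschitz extension operator) that the paper defers to Lemma~\ref{l:lap_u_hom}'s proof; the paper's route is more modular. The technical point you raise in your last paragraph, controlling $\N(\nabla u_0)$ from $u_0 \in H^2(\Dd)$, is real, but the paper's proofs of both Lemmas~\ref{l:lap_u_hom} and~\ref{l:defK} gloss over it in the same way, so this is not a gap specific to your argument.
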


\begin{proof}
Extend~$f \in L^2(\Om)$ by zero to a function~$\widetilde{f} \in L^2(\Dd)$. 
Let~$v \in H^2(\Dd$) be the solution to~$(1 - \Delta) v = \widetilde{f}$ and~$v = 0$ on~$\partial \Dd$ 
such that
\begin{equation}
\label{eq:l:vf}
\| v \|_{H^2(\Dd)} \leq C \| f \|_{L^2(\Om)},
\end{equation}
and put~$u = v + w$. 
It follows that~$(1 - \Delta) w = 0$ in~$\Om$ and~$\partial_{\nu} w = - \partial_{\nu} v$ on~$\partial \Om$. 
Since~$\nabla v \in H^1(\R^n)$ and~(\ref{eq:l:vf}) holds,
the trace~$\partial_{\nu} v \in L^2(\partial \Om)$ satisfies
\begin{equation}
\label{eq:c:pnv_est}
\| \partial_{\nu} v \|_{L^2(\partial \Om)} \leq C \| v \|_{H^1(\R^n)} \leq C \|f \|_{L^2(\R^n)}.
\end{equation}
Applying Lemma~\ref{l:lap_u_hom} with~$g = -\partial_{\nu} v$, we obtain the unique~$w \in H^1(\Om)$
such that~$(1-\Delta) w = 0$,~$\partial_{\nu} w = g$, and
\[
\| \N(w) \|_{L^2(\partial \Om)} + \| \N(\nabla w) \|_{L^2(\partial \Om)} \leq C \| f \|_{L^2(\Om)},
\]
where we used~(\ref{eq:c:pnv_est}). Since~$u = v+w$, we have now proved the statements in the lemma.
\end{proof}

\noindent
Notice that Lemmas~\ref{l:lap_u_hom} and~\ref{l:defK} imply that
\begin{equation}
\label{eq:est_max_Kj}
\| \N(\Kj u) \|_{L^2(\partial \Oj)} + \| \N(\nabla \Kj u) \|_{L^2(\partial \Oj)} \leq C \| u \|_{L^2(\Oj)}, \quad j=1,2.
\end{equation}

\subsection{Extension Operators} 
It will be necessary for our purposes to extend functions from either Lipschitz-
or $C^{1,\alpha}$-domains. The following result provides the possibility to accomplish this.

\begin{lemma}
\label{l:extension}
\mbox{}
\begin{enumerate}[
label={\rm(\roman{*})},
ref=\ref{l:extension}(\roman{*}), 
]
\item
\label{l:lip_ext}
Suppose that~$f \in H^1(\partial \Om)$ and~$g \in L^2(\partial \Om)$,
where~$\Om$ is a Lipschitz domain. Then 
there exists a function~$u \in H^1(\Om^c)$ such that~$u \rightarrow f$
and~$n \cdot \nabla u \rightarrow g$ nontangentially at almost every point on~$\partial \Om$,
where~$n$ is the outwards normal of~$\Om$,
and there exists a constant~$C$ such that
\[
\| \N(u) \|_{L^2(\partial \Om)} + \|\N(\nabla u)\|_{L^2(\partial \Om)}
\leq
C ( \| f \|_{H^1(\partial \Om)} + \|g\|_{L^2(\partial \Om)} ),
\]
where~$C$ depends on~$M$ and~$B_1,B_2,\ldots,B_N$.
\item 
\label{l:holder_ext}
Suppose that~$f \in C^{1,\alpha}(\partial \Om)$ and~$g \in C^{0,\alpha}(\partial \Om)$,
where~$\Om$ is a~$C^{1,\alpha}$ domain. 
Then there exists a function~$u \in C^{1,\alpha}(\Om^c)$ such that~$u = f$ and~$\partial_{\nu} u = g$ on~$\partial \Om$,
and there exists a constant~$C$ such that
\begin{equation}
\label{eq:holder_ext}
\| u \|_{C^{1,\alpha}(\Om^c)} \leq C ( \| f \|_{C^{1,\alpha}(\partial \Om)} + \| g \|_{C^{0,\alpha}(\partial \Om)}). 
\end{equation}
\end{enumerate}
\end{lemma}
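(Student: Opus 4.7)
The plan is to prove both parts by the same localization strategy, reducing to flat halfspace geometry via the chart structure already fixed in Section~\ref{s:def_lipdom}. Using a smooth partition of unity $\{\psi_k\}_{k=1}^N$ subordinate to the covering $\{B_k\}$, together with an additional cutoff that handles the part of $\Omega^c$ away from $\partial\Omega$ (where the extension may be taken to be zero), it suffices to construct on each chart $B_k$ an extension of $\psi_k f$ and $\psi_k g$ from $\partial\Omega$ into $\Omega^c\cap B_k$ with the required estimates; the sum of these local pieces then gives $u$.

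In a chart $B_k$ where $\Omega\cap B_k=\{y_n>h_k(y')\}$, I flatten the boundary by $(y',y_n)\mapsto(z',z_n)=(y',y_n-h_k(y'))$, so that $\Omega^c\cap B_k$ corresponds to $\{z_n\le 0\}$. On this halfspace I write the local extension explicitly as
\[
u_k(z',z_n) = \eta(z_n)\bigl(f_k(z')+z_n\,\widetilde g_k(z')\bigr),\quad z_n\le 0,
\]
where $\eta$ is a smooth cutoff with $\eta(0)=1$ and $\eta'(0)=0$ supported near $z_n=0$, $f_k=\psi_k f$, and $\widetilde g_k$ is a bounded linear combination of $\psi_k g$ and $\nabla_{z'}f_k$ whose coefficients are pointwise Lipschitz functions of $\nabla h_k$, chosen so that when $u_k$ is transported back to the original coordinates its outward normal derivative on $\partial\Omega$ equals $\psi_k g$. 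By construction $u_k|_{\partial\Omega}=\psi_k f$ in the nontangential sense.

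For part (i), I would estimate the non-tangential maximal function pointwise by the Hardy--Littlewood maximal function $M$ on $\partial\Omega$: for $x\in\Gamma(x')\cap B_k$ whose projection to $\partial\Omega$ in flattened coordinates is $(z',0)$, both $|z'-x'|$ and $|z_n|$ are comparable to $\mbox{dist}(x,\partial\Omega)$, so $|u_k(x)|\le C(Mf(x')+Mg(x'))$ and, differentiating the explicit formula, $|\nabla u_k(x)|\le C(M(\nabla_T f)(x')+Mf(x')+Mg(x'))$. Summing over $k$ and invoking $L^2$-boundedness of $M$ on $\partial\Omega$ together with $\|\nabla_T f\|_{L^2(\partial\Omega)}\le\|f\|_{H^1(\partial\Omega)}$ gives the stated bound. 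For part (ii), the same explicit formula is used in the flattened chart. Since $h_k\in C^{1,\alpha}$, the chart diffeomorphism is $C^{1,\alpha}$, and since $f_k\in C^{1,\alpha}(\partial\Omega)$ and $\widetilde g_k\in C^{0,\alpha}(\partial\Omega)$, the product and Leibniz rule in Hölder spaces show directly that $u_k\in C^{1,\alpha}$ up to $\{z_n=0\}$ with norm bounded by $\|f\|_{C^{1,\alpha}(\partial\Omega)}+\|g\|_{C^{0,\alpha}(\partial\Omega)}$, and this bound is preserved under transport back to the original coordinates.

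The main obstacle is the nontangential maximal function bound in part~(i): one must verify that for $x\in\Gamma(x')\cap B_k$ the foot $z'$ of its projection onto $\partial\Omega$ stays within a fixed $M$-dependent multiple of $\mbox{dist}(x,\partial\Omega)$ of $x'$, so that each factor $f_k(z')$, $\nabla_T f_k(z')$, $\widetilde g_k(z')$ in the explicit formula is genuinely controlled by the maximal function at $x'$, uniformly across charts despite the chart transitions contributing derivatives of $h_k$, $\psi_k$, and $\eta$. Once this geometric control is in place, everything else is bookkeeping.
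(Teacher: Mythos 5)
Your local explicit extension $u_k(z',z_n)=\eta(z_n)\bigl(f_k(z')+z_n\,\widetilde g_k(z')\bigr)$ does not produce a function with a square-integrable non-tangential maximal function of the gradient for the stated data classes. Differentiating in the tangential direction gives
\[
\nabla_{z'}u_k=\eta(z_n)\bigl(\nabla_{z'}f_k(z')+z_n\,\nabla_{z'}\widetilde g_k(z')\bigr),
\]
and the term $z_n\,\nabla_{z'}\widetilde g_k$ is the problem: $\widetilde g_k$ is built from $\psi_k g\in L^2(\partial\Om)$ and $\nabla_{z'}f_k\in L^2(\partial\Om)$, neither of which admits a further tangential derivative in $L^2$, so $\nabla_{z'}\widetilde g_k$ is not a function and $\nabla u_k$ is not even pointwise defined in the cone. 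The factor $z_n$ does not rescue you, because the non-tangential maximal function is a genuine supremum over the cone; at distance $\delta$ from the boundary one has $z_n\sim\delta$ and the term is $\delta\cdot\nabla_{z'}\widetilde g_k$, which is not controlled by $Mg$ or $M(\nabla_T f)$. Your claimed bound $|\nabla u_k(x)|\le C\bigl(M(\nabla_T f)(x')+Mf(x')+Mg(x')\bigr)$ therefore does not follow from the formula you wrote, and the geometric comparability of the foot $z'$ with $\mbox{dist}(x,\partial\Om)$ --- which you flag as the main obstacle --- is actually the easy part.

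The standard repair is to average in the tangential variable at scale $|z_n|$: replace $f_k(z')$ and $\widetilde g_k(z')$ in your formula with mollified versions $(f_k*\phi_{|z_n|})(z')$ and $(\widetilde g_k*\phi_{|z_n|})(z')$ for a smooth compactly supported approximate identity $\phi_t$; then $\nabla_{z'}(\widetilde g_k*\phi_{|z_n|})$ picks up a factor $|z_n|^{-1}$ that cancels the $z_n$ in front, and the Hardy--Littlewood maximal function bounds go through. The paper sidesteps this entirely by taking a potential-theoretic route: on each local piece it solves the biharmonic Dirichlet problem $\Delta^2 u=0$ in a Lipschitz domain $D_k$ with data $(f_k,g_k)$, invoking Dahlberg--Kenig--Verchota for the non-tangential maximal function estimate, then sums over the covering. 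Both routes are viable once the mollification is in place, but the PDE route delegates all interior smoothness to the biharmonic theory, while your direct construction must build it in by hand; as written, it does not.
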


\begin{proof}
Let~$B_k$ be given as in Section~\ref{s:def_lipdom}. 
Choose~$\eta_k \in C^{\infty}_c(B_k)$,~$k=1,2,\ldots,N$, such that
$\eta_1 + \eta_2 +\cdots+\eta_N = 1$
in an open neighborhood containing~$\partial \Om$.
For each~$k$, define~$f_k = \eta_k f$ and~$g_k = \eta_k g$ on~$B_k \cap \partial \Om$,
and let~$f_k = g_k = 0$ on~$\partial B \cap \Om^c$.
Let~$D_k$ be the bounded domain with boundary~$(\partial \Om \cap B) \cup (\partial B \cap \Om^c)$.
Then~$D_k$ is a Lipschitz domain with connected boundary,~$f_k \in H^1(\partial D_k)$,
and~$g_k \in L^2(\partial D_k)$.  
According to, e.g., Dahlberg et al.~\cite{biharmonic}, there exists a solution~$u$
to~$\Delta^2 u = 0$ in~$D_k$ such 
that~$u \rightarrow f_k$ and~$n \cdot \nabla u \rightarrow g_k$ 
nontangentially at almost every point on~$\partial D_k$, where~$-n$ is the outwards normal at~$\partial D_k$.
Moreover,
\begin{equation}
\label{eq:biharm_est}
\begin{aligned}
\| \N(u) \|_{L^2(\partial D_k)} + \| \N(\nabla u) \|_{L^2(\partial D_k)} 
	&\leq C ( \| f_k \|_{H^1(\partial D_k)} + \|g_k\|_{L^2(\partial D_k)} )\\
	&\leq C ( \| f \|_{H^1(\partial \Om)} + \|g\|_{L^2(\partial \Om)} ),
\end{aligned}
\end{equation}
where~$C$ is independent of~$u$,~$f$, and~$g$, but depends on the Lipschitz constant of~$D_k$.
Carrying out the same argument for all of the balls~$B_k$ in Section~\ref{s:def_lipdom}, which is a finite number,
we obtain~$u \in H^1(D)$, where~$D = D_1 \cup D_2 \cup \cdots \cup D_m$.
We may extend~$u$ to all of~$\Om^c$ be letting~$u=0$ outside~$D$ and obtain~$u \in H^1(\Om^c)$
which satisfies the statement in~\ref{l:lip_ext}. 

The proof of Lemma~\ref{l:holder_ext} can be carried out analogously with the Lipschitz case. However, the result
is well known for~$C^{1,\alpha}$-domains and the proof is omitted. 
\end{proof}

We will commonly denote the extension for, e.g., a function~$u$, obtained from this Lemma by~$\widetilde{u}$.

%%%%%%%%%%%%%%%%%%%%%%%%%%%%%%%%%%%%%%%%%%%%%%%%%%%%%%%%%%%%%%%%%%%%%%%%%555
%%%%%%%%%%%%%%%%%%%%%%%%%%%%%%%%%%%%%%%%%%%%%%%%%%%%%%%%%%%%%%%%%%%%%%%%%555
%%%%%%%%%%%%%%%%%%%%%%%%%%%%%%%%%%%%%%%%%%%%%%%%%%%%%%%%%%%%%%%%%%%%%%%%%555
%%%%%%%%%%%%%%%%%%%%%%%%%%%%%%%%%%%%%%%%%%%%%%%%%%%%%%%%%%%%%%%%%%%%%%%%%555
%%%%%%%%%%%%%%%%%%%%%%%%%%%%%%%%%%%%%%%%%%%%%%%%%%%%%%%%%%%%%%%%%%%%%%%%%555
%%%%%%%%%%%%%%%%%%%%%%%%%%%%%%%%%%%%%%%%%%%%%%%%%%%%%%%%%%%%%%%%%%%%%%%%%555
%%%%%%%%%%%%%%%%%%%%%%%%%%%%%%%%%%%%%%%%%%%%%%%%%%%%%%%%%%%%%%%%%%%%%%%%%555

\subsection{Determination of the Quantity~\boldmath{$\eps$}}
\label{s:lipeps}
We now proceed by determining a quantity~$\eps$ suitable for our purpose.
Let us investigate the assertions in~(\ref{eq:def_eps}) and~(\ref{eq:req_Xm}).
The assumption in~(\ref{eq:req_Xm}) is in our case  
\begin{equation}
\label{eq:applip_req_Xm}
\int_{\Oe \setminus \Ot} |\vp|^2 \, dx \leq \eps \| \vp \|^2_1
\quad
\mbox{for every } \vp \in \Xm.
\end{equation}
There exists a constant~$C$, depending on the domain~$\Oe$ and~$\lambda$, such that
for every weak solution to the elliptic problem~$(1-\Delta) \vp = \lambda \vp$ in~$\Oe$ 
with~$\partial_{\nu} \vp = 0$ on~$\partial \Oe$, 
\[
\| \vp \|_{L^{\infty}(\Oe)} \leq C \| \vp \|_{L^2(\Oe)};
\]
see, e.g, Theorem~8.15 in Gilbarg and Trudinger~\cite{Gilbarg2001}. 
This enables us to estimate the left-hand side of~(\ref{eq:applip_req_Xm}) by
\begin{equation}
\label{eq:est_vp2_subdom}
\begin{aligned}
\int_{\Oe \setminus \Ot} |\vp|^2 \, dx \leq \| \vp \|^2_{L^{\infty}(\Oe \setminus \Ot)} |\Oe \setminus \Ot| 
\leq C d \,  \| \vp \|_1^2,
\end{aligned}
\end{equation}
where~$d$ is the Hausdorff distance between~$\Oe$ and~$\Ot$ and~$|\Oe \setminus \Ot|$ is
the Lebesgue measure of~$\Oe \setminus \Ot$.

To prove the assertion in~(\ref{eq:def_eps}), we use the following lemmas.

\begin{lemma}
\label{l:diff_cap}
Suppose that~$v = \Kt \St w - \St \Ke \Se w$, where~$w \in L^2(\Dd)$. 
Then~$v$ satisfies~$(1 - \Delta) v = 0$ in~$\Oe \cap \Ot$ and~$v \in H^1(\partial(\Oe \cap \Ot))$.
Moreover, there exists a positive constant~$C$, depending only on~$M$ and~$B_1,B_2,\ldots,B_N$, such that
\begin{enumerate}
\item[{\rm(i)}] if~$w \in L^2(\Oe \cap \Ot)$, then
\[
\| v \|_{H^1(\Oe \cap \Ot)}^2 \leq 
C d \, \|g \|_{L^2(\partial(\Oe \cap \Ot))} \| w \|_{L^2(\Oe \cap \Ot)},
\]
\item[{\rm(ii)}] if~$w \in L^2(\Oe \setminus \Ot)$, then
\[
\| v \|_{H^1(\Oe \cap \Ot)}^2 \leq 
C d^{1/2} \, \|g \|_{L^2(\partial(\Oe \cap \Ot))} \| w \|_{L^2(\Oe \setminus \Ot)},
\]
\item[{\rm(iii)}] and if~$w \in L^2(\Ot \setminus \Oe)$, then
\[
\| v \|_{H^1(\Oe \cap \Ot)}^2 \leq 
C d^{1/2} \, \|g \|_{L^2(\partial(\Oe \cap \Ot))} \| w \|_{L^2(\Ot \setminus \Oe)},
\]
\end{enumerate}
where~$w$ is extended by zero outside the respective domains, and~$g = \partial_{\nu} v$ on~$\partial (\Oe \cap \Ot)$.
\end{lemma}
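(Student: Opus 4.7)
The first two assertions reduce to direct unpacking: writing $U_1=\Ke\Se w$ and $U_2=\Kt\St w$ (each extended by zero outside the relevant~$\Oj$), one has $v=U_2-U_1$ on~$\Oe\cap\Ot$, and since both~$U_j$ solve $(1-\Delta)U_j=w$ in their respective Neumann problems, the difference satisfies $(1-\Delta)v=0$ in~$\Oe\cap\Ot$. The boundary regularity~$v\in H^1(\partial(\Oe\cap\Ot))$ follows from the non-tangential maximal function estimate~\eqref{eq:est_max_Kj} applied to each~$U_j$ on~$\partial\Oj$, together with the inclusion $\partial(\Oe\cap\Ot)\subset\partial\Oe\cup\partial\Ot$, which transfers traces and tangential gradients to the common boundary.

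Next, Green's identity yields
\[
\|v\|_{H^1(\Oe\cap\Ot)}^2=\int_{\partial(\Oe\cap\Ot)} g\,v\,dS \leq \|g\|_{L^2(\partial(\Oe\cap\Ot))}\cdot\|v\|_{L^2(\partial(\Oe\cap\Ot))},
\]
so the three estimates reduce to bounding $\|v\|_{L^2(\partial(\Oe\cap\Ot))}$ by the appropriate power of~$d$ times~$\|w\|$. In cases~(ii) and~(iii) the structure simplifies: if $w$ is supported in~$\Oe\setminus\Ot$, then~$\St w=0$ and hence $\Kt\St w=0$, so $v=-U_1$ on~$\Oe\cap\Ot$. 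The support~$\Oe\setminus\Ot$ is a strip of width at most comparable to~$d$ adjacent to~$\partial\Oe$, and a normal foliation of this strip combined with the pointwise bound $|U_1|\leq\N(U_1)$ along cones yields $\|U_1\|_{L^2(\Oe\setminus\Ot)}\leq Cd^{1/2}\|w\|$. An analogous slicing argument bounds $\|U_1\|_{L^2(\partial(\Oe\cap\Ot))}\leq Cd^{1/2}\|w\|$, which combined with the trivial bound on $\|g\|$ furnished by~\eqref{eq:est_max_Kj} delivers the claim in~(ii); case~(iii) is symmetric.

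The main obstacle is case~(i), where both~$U_1$ and~$U_2$ are genuinely present and a full factor~$d$ must be recovered through cancellation. Split $\partial(\Oe\cap\Ot)=\Gamma_1\cup\Gamma_2$ with $\Gamma_j\subset\partial\Oj$: then $\partial_\nu U_1=0$ on $\Gamma_1$ and $\partial_\nu U_2=0$ on $\Gamma_2$, while the Hausdorff distance from~$\Gamma_1$ to~$\partial\Ot$ (and from~$\Gamma_2$ to~$\partial\Oe$) is at most~$d$. For $x\in\Gamma_1$, comparing $U_2(x)$ to $U_2(y)$ at a nearest point $y\in\partial\Ot$ by integrating $\nabla U_2$ along the short normal segment and exploiting $\partial_\nu U_2(y)=0$ yields $|U_2(x)-U_2(y)|\leq Cd\cdot\N(\nabla U_2)(y)$; applying the analogous argument to~$U_1$ across~$\partial\Oe$ produces the companion bound. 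Combined, these give a pointwise estimate $|v(x)|\leq Cd\bigl(\N(\nabla U_1)(\xi_1(x))+\N(\nabla U_2)(\xi_2(x))\bigr)$ with $\xi_j(x)\in\partial\Oj$ within distance~$d$ of~$x$; squaring, integrating, and invoking~\eqref{eq:est_max_Kj} produces $\|v\|_{L^2(\partial(\Oe\cap\Ot))}\leq Cd\|w\|_{L^2(\Oe\cap\Ot)}$. The delicate point is matching the normals of the two merely Lipschitz boundaries along these short normal segments and controlling the Jacobians of the associated projections using the uniform covering~$B_k$ and Lipschitz constants~$M$ from Section~\ref{s:def_lipdom}.
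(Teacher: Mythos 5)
Your opening observations are correct and match the paper: $(1-\Delta)v=0$ in $\Oe\cap\Ot$ and the boundary regularity of~$v$ follow from the non-tangential estimates. The Green's identity $\|v\|_{H^1(\Oe\cap\Ot)}^2 = \int_{\partial(\Oe\cap\Ot)} v\,\partial_\nu v\,dS$ is also the right starting point. But the step you take immediately afterward---applying Cauchy--Schwarz to get $\|g\|\,\|v\|_{L^2(\partial(\Oe\cap\Ot))}$ and then trying to show $\|v\|_{L^2(\partial(\Oe\cap\Ot))}$ is small---cannot work. The trace of $v$ on $\partial(\Oe\cap\Ot)$ is not small: in case~(ii), $v=-\Ke\Se w$ on $\Oe\cap\Ot$, and its trace on $\Gamma_1\subset\partial\Oe$ is an ordinary trace on a fixed boundary piece, so $\|U_1\|_{L^2(\partial(\Oe\cap\Ot))}$ is $O(\|w\|)$, not $O(d^{1/2}\|w\|)$ as you claim. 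The slicing argument that earns $d^{1/2}$ for the \emph{volume} norm $\|U_1\|_{L^2(\Oe\setminus\Ot)}$ uses the thinness of $\Oe\setminus\Ot$; the surface $\partial(\Oe\cap\Ot)$ is not thin and the argument does not transfer. Likewise in case~(i), from $|U_2(x)-U_2(y)|\leq Cd\,\N(\nabla U_2)(y)$ you cannot conclude anything about $|v(x)|=|U_2(x)-U_1(x)|$, because $U_2(y)$ and $U_1(x)$ do not cancel: both remain $O(1)$, and nothing forces their difference to be $O(d)$. Your pointwise bound $|v(x)|\leq Cd(\cdots)$ does not follow.

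The missing idea is that the cancellation must be exploited \emph{before} applying Cauchy--Schwarz on the intersection boundary. The paper keeps the integral $\int_{\partial(\Oe\cap\Ot)} v\,\partial_\nu v\,dS$ intact and observes that $\partial_\nu v = \partial_\nu\Kt\St w$ on $\partial\Oe\cap\Ot$ (since $\partial_\nu\Ke\Se w=0$ there) and $\partial_\nu v = -\partial_\nu\Ke\Se w$ on $\Oe\cap\partial\Ot$ (since $\partial_\nu\Kt\St w=0$ there). Because $\partial_\nu\Kt\St w$ also vanishes on the remaining part of $\partial(\Ot\setminus\Oe)$ and likewise for $\partial_\nu\Ke\Se w$ on $\partial(\Oe\setminus\Ot)$, each boundary integral can be rerouted (after extending $v$ to $\tilde v$ using Lemma~\ref{l:lip_ext}) to an integral over $\partial(\Ot\setminus\Oe)$ or $\partial(\Oe\setminus\Ot)$, and then converted by Green's theorem to a volume integral over the thin regions $\Ot\setminus\Oe$ or $\Oe\setminus\Ot$. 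There the measure is $O(d)$ and each factor ($\tilde v$, $\nabla\tilde v$, $\Kj\Sj w$, $\nabla\Kj\Sj w$) is controlled by non-tangential maximal functions, yielding $d^{1/2}\cdot d^{1/2}=d$; the single term $\tilde v\,\St w$ (or $\tilde v\,\Se w$), in which $w$ itself lacks a maximal-function bound, yields only $d^{1/2}$, which is precisely what forces the weaker exponent in cases~(ii) and~(iii). Without this rerouting through the thin outer regions, the $d$ factors cannot be obtained.
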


\begin{proof}
Since~$v \in H^1(\Oe \cap \Ot)$ 
satisfies~$(1 - \Delta) v = 0$ in~$\Oe \cap \Ot$
and~$\partial_{\nu} v$ belongs to~$L^2(\partial(\Oe \cap \Ot))$,
Lemma~\ref{l:lap_u_hom} implies that
\begin{equation}
\label{eq:est_v}
\| N(v) \|_{L^2(\partial (\Oe \cap \Ot))} + \| N(\nabla v) \|_{L^2(\partial (\Oe \cap \Ot))}
\leq
C \| g \|_{L^2(\partial(\Oe \cap \Ot))}
\end{equation}
and that~$v \in H^1(\partial (\Oe \cap \Ot))$. 
Moreover, Lemma~\ref{l:extension}(i) ensures the existence of an extension~$\vve \in H^1(\R^n)$ such that
\begin{equation}
\label{eq:est_ve}
\| N(\vve) \|_{L^2(\partial (\Oe \cap \Ot))} + \| N(\nabla \vve) \|_{L^2(\partial (\Oe \cap \Ot))}
\leq
C \| g \|_{L^2(\partial(\Oe \cap \Ot))}.
\end{equation}
Now,
\begin{equation*}
\begin{aligned}
\int_{\Oe \cap \Ot} \bigl( v^2 + |\nabla v|^2 \bigr) \, dx 
= {} & 
\int_{\partial (\Oe \cap \Ot)} v \partial_{\nu} v \, dS\\
= {} &
\int_{\partial \Oe \cap \Ot}  v \partial_{\nu} \Kt \St w \, dS
-
\int_{\Oe \cap \partial \Ot}  v \partial_{\nu} \Ke \Se w \, dS\\
= {} &
-\int_{\partial (\Ot \setminus \Oe)}  \vve \partial_{\nu} \Kt \St w \, dS
+\int_{\partial (\Oe \setminus \Ot)}  \vve \partial_{\nu} \Ke \Se w \, dS,
\end{aligned}
\end{equation*}
where we used the fact that~$\partial_{\nu} \Kt \St w = 0$ on~$\partial \Ot$ 
and~$\partial_{\nu} \Ke \Se w = 0$ on~$\partial \Oe$.

Since~$(1 - \Delta) \Kt \St w = \St w$ in~$\Ot \setminus \Oe$, we obtain that
\begin{equation}
\label{eq:lanki1}
-\int_{\partial (\Ot \setminus \Oe)}  \vve \partial_{\nu} \Kt \St w \, dS
=
\int_{\Ot \setminus \Oe}  \bigl( \vve \St w - \vve \Kt \St w - \nabla \vve \cdot \nabla \Kt \St w \bigr) \, dx.
\end{equation}
If~$w \in L^2(\Oe \cap \Ot)$, then~$\St w = 0$ and the right-hand side of~(\ref{eq:lanki1})
is bounded by
\begin{equation}
\label{eq:lanki2}
C d \, \| g \|_{L^2(\partial(\Oe \cap \Ot))} 
\| w \|_{L^2(\Oe \cap \Ot)} .
\end{equation}
This follows from the Cauchy-Schwarz inequality,~(\ref{eq:est_ve}), and~(\ref{eq:est_max_Kj}), since, e.g.,
\[
\begin{aligned}
\int_{\Ot \setminus \Oe}  |\vve \Kt \St w|  \, dx,
&\leq
\biggl( \int_{\Ot \setminus \Oe}  \vve^2  \, dx \biggr)^{1/2}
\biggl( \int_{\Ot \setminus \Oe}  (\Kt \St w)^2  \, dx \biggr)^{1/2}\\
&\leq C d \biggl( \int_{\partial (\Oe \cap \Ot)}  \N(\vve)^2  \, dx' \biggr)^{1/2}
\biggl( \int_{\partial (\Oe \cap \Ot)}  \N(\Kt \St w)^2  \, dx' \biggr)^{1/2}.
\end{aligned}
\]
If~$w \in L^2(\Oe \setminus \Ot)$, then~$\St w = 0$, and analogously with~(\ref{eq:lanki2}),
the expression in~(\ref{eq:lanki1}) is bounded by
$
C d \, \| g \|_{L^2(\partial(\Oe \cap \Ot))} 
\| w \|_{L^2(\Oe \setminus \Ot)} .
$
If~$w \in L^2(\Ot \setminus \Oe)$, then~$\St w = w$. Since
\[
\int_{\Ot \setminus \Oe}  | \vve w | \, dx
\leq C d^{1/2} \, \| g \|_{L^2(\partial(\Oe \cap \Ot))} 
\| w \|_{L^2(\Ot \setminus \Oe)},
\]
we obtain that~(\ref{eq:lanki1}) is bounded by
$
C d^{1/2} \, \| g \|_{L^2(\partial(\Oe \cap \Ot))} \| w \|_{L^2(\Ot \setminus \Oe)}.
$

Analogously, the expression
\[
\int_{\partial (\Oe \setminus \Ot)}  \vve \partial_{\nu} \Ke \Se w \, dS
=
\int_{\Oe \setminus \Ot}  \bigl( \vve \Ke \Se w + \nabla \vve \cdot \nabla \Ke \Se w - \vve \Se w \bigr) \, dx
\]
is bounded by
\[
\begin{aligned}
&C d \, 
\| g \|_{L^2(\partial(\Oe \cap \Ot))} 
\| w \|_{L^2(\Oe \cap \Ot)}
& & \mbox{if } w \in L^2(\Oe \cap \Ot),\\ 
&C d \| g \|_{L^2(\partial(\Oe \cap \Ot))} \| w \|_{L^2(\Ot \setminus \Oe)}
& & \mbox{if } w \in L^2(\Ot \setminus \Oe),\\
& C d^{1/2} 
\| g \|_{L^2(\partial(\Oe \cap \Ot))} 
\| w \|_{L^2(\Oe \setminus \Ot)}
& & \mbox{if } w \in L^2(\Oe \setminus \Ot),
\end{aligned}
\]
respectively.
\end{proof}

\begin{lemma}
\label{l:applip_maineq}
There exists a constant~$C > 0$ such that
\begin{equation}
\label{eq:applip_maineq}
\| \Kt w - \S \Ke \Ss w \|^2 \leq C d^{1/2} \, \| w \|^{2} \quad \mbox{for every } w \in L^2(\Ot)
\end{equation}
and
\begin{equation}
\label{eq:applip_Bu}
\| \B \vp \|^2 \leq C d \, \| \vp \|^{2} \quad \mbox{for every } \vp \in \Xm.
\end{equation}
\end{lemma}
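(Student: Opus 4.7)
The plan is to exploit the three-case structure of Lemma~\ref{l:diff_cap}: whenever the source appearing in $(\Kt - \S\Ke\Ss)w$ is supported in the overlap $\Oe\cap\Ot$, part~(i) delivers the sharp order~$d$, whereas support in $\Oe\setminus\Ot$ or $\Ot\setminus\Oe$ yields only~$d^{1/2}$. Both inequalities will accordingly be reduced to Lemma~\ref{l:diff_cap} applied on $\Oe\cap\Ot$, complemented by the thin-layer bound $\int_{\Ot\setminus\Oe}|f|^2\,dx \leq Cd\,\|\N(f)\|^2_{L^2(\partial\Ot)}$, where $\|\N(f)\|_{L^2(\partial\Ot)}$ is controlled via~(\ref{eq:est_max_Kj}).

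For~(\ref{eq:applip_maineq}), I would decompose $w = w_1 + w_2$ with $w_1 = w\chi_{\Oe\cap\Ot}$ and $w_2 = w\chi_{\Ot\setminus\Oe}$, set $v_j = \Kt\St w_j - \St\Ke\Se w_j$, and split $v = \Kt w - \S\Ke\Ss w = v_1 + v_2$ by linearity. On $\Oe\cap\Ot$ each $v_j$ satisfies $(1-\Delta)v_j = 0$, so Lemma~\ref{l:diff_cap}(i) applied to $v_1$ and Lemma~\ref{l:diff_cap}(iii) applied to $v_2$, together with the bound $\|\partial_\nu v_j\|_{L^2(\partial(\Oe\cap\Ot))} \leq C\|w_j\|$ (which follows from~(\ref{eq:est_max_Kj}) and the Neumann conditions that eliminate the contributions of $\partial_\nu \Kt\St w_j$ on $\Oe\cap\partial\Ot$ and of $\partial_\nu \Ke\Se w_j$ on $\partial\Oe\cap\Ot$), yield $\|v\|^2_{L^2(\Oe\cap\Ot)} \leq Cd^{1/2}\|w\|^2$. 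On $\Ot\setminus\Oe$ the term $\St\Ke\Se w$ vanishes because $\Ke\Se w$ is supported in~$\Oe$; hence $v = \Kt w$ there, and the thin-layer estimate combined with~(\ref{eq:est_max_Kj}) gives $\|v\|^2_{L^2(\Ot\setminus\Oe)} \leq Cd\|w\|^2$. Summing the two contributions produces~(\ref{eq:applip_maineq}).

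For~(\ref{eq:applip_Bu}), I would split $\vp = \vp\chi_{\Oe\cap\Ot} + \vp\chi_{\Oe\setminus\Ot}$ and estimate $\B$ on each piece. Since $\Oe\setminus\Ot$ and $\Ot$ are disjoint, $\S(\vp\chi_{\Oe\setminus\Ot}) = 0$, so $\B(\vp\chi_{\Oe\setminus\Ot}) = -\S\Ke(\vp\chi_{\Oe\setminus\Ot})$; this is bounded in norm by $C\|\vp\chi_{\Oe\setminus\Ot}\| \leq Cd^{1/2}\|\vp\|$ via~(\ref{eq:est_vp2_subdom}). For the remaining piece, $\Ss(\vp\chi_{\Oe\cap\Ot}) = \vp\chi_{\Oe\cap\Ot}$ because the support already lies in~$\Oe$, and hence $\B(\vp\chi_{\Oe\cap\Ot}) = (\Kt - \S\Ke\Ss)(\vp\chi_{\Oe\cap\Ot})$. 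Now the source is supported entirely in $\Oe\cap\Ot$, so the argument of the previous paragraph simplifies to Lemma~\ref{l:diff_cap}(i) only and yields the sharper bound $\|\B(\vp\chi_{\Oe\cap\Ot})\|^2 \leq Cd\|\vp\|^2$. The triangle inequality followed by squaring then delivers~(\ref{eq:applip_Bu}). The main technical point I anticipate is establishing the trace bound $\|\partial_\nu v\|_{L^2(\partial(\Oe\cap\Ot))} \leq C\|w\|$, which requires combining~(\ref{eq:est_max_Kj}) and interior elliptic regularity with the cancellation produced by the Neumann conditions on the two pieces of $\partial(\Oe\cap\Ot)$.
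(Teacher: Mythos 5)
Your proposal is correct and takes essentially the same route as the paper's: both proofs reduce to Lemma~\ref{l:diff_cap} on the overlap $\Oe\cap\Ot$ together with a thin-layer estimate from~(\ref{eq:est_max_Kj}) on $\Ot\setminus\Oe$, with the decomposition of the source $w$ (resp.\ $\vp$) by support carried out so that the appropriate case of Lemma~\ref{l:diff_cap} applies to each piece. The only notable deviation is your treatment of $\vp\chi_{\Oe\setminus\Ot}$ in~(\ref{eq:applip_Bu}), where you bound $\|\S\Ke(\vp\chi_{\Oe\setminus\Ot})\|$ directly by operator boundedness and~(\ref{eq:est_vp2_subdom}) rather than routing through Lemma~\ref{l:diff_cap}(ii); this is a minor simplification that still lands on the required $Cd\|\vp\|^2$ bound, so the proofs are materially the same.
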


\begin{proof}
Put~$v =  \Kt w - \S \Ke \Ss w $. We split the domain~$\Ot$ in two disjoint subdomains: 
$\Oe \cap \Ot$ and~$\Ot \setminus \Oe$.
For the subdomain~$\Ot \setminus \Oe$, it is clear from~(\ref{eq:est_max_Kj}) that
\begin{equation}
\label{eq:est_vt_diff}
\int_{\Ot \setminus \Oe} v^2 \, dx = \int_{\Ot \setminus \Oe} (\Kt w)^2 \, dx
\leq
C d \, \| w \|_{L^2(\Ot)}^2.
\end{equation}
Lemma~\ref{l:diff_cap} now implies the inequality in~(\ref{eq:applip_maineq})
since
\begin{equation}
\label{eq:lip_est_g}
\| g \|_{L^2(\partial(\Oe \cap \Ot)} \leq C \| w \|_{L^2(\Ot)}.
\end{equation}

To prove~(\ref{eq:applip_Bu}), observe first that~(\ref{eq:est_vp2_subdom}) holds.
Thus, by letting~$w = \vp$, we can apply Lemma~\ref{l:diff_cap} with~$v = \B \vp$ 
and obtain that
\[
\int_{\Oe \cap \Ot} (\B \vp)^2 \, dx \leq C d \, \| g \|_{L^2(\partial (\Oe \cap \Ot))} \| \vp \|_{L^2(\Oe)}.
\]
Since also~$\B \vp = v$ on~$\Ot \setminus \Oe$, inequalities~(\ref{eq:est_vt_diff}) and~(\ref{eq:lip_est_g})
are applicable, which concludes the proof of~(\ref{eq:applip_Bu}).
\end{proof}

\noindent
Thus, by~(\ref{eq:applip_maineq}) and~(\ref{eq:est_vp2_subdom}), it is clear that we can 
choose~$\eps = C d^{1/2}$.
Furthermore, if~$\Ot$ is a subdomain of~$\Oe$, we obtain a bound depending on~$d$ instead of~$d^{1/2}$ for a 
general function~$w \in L^2(\Ot)$; this is a consequence of that fact that the term~$\| w \|_{L^2(\Ot \setminus \Oe)}$
vanishes in Lemma~\ref{l:diff_cap} when~$\Ot \subset \Oe$. 

\begin{remark}
If~$\Ot \subset \Oe$, then
\begin{equation}
\label{eq:applip_maineq_subset}
\ds \| \Kt w - \S \Ke \Ss w \|^2 \leq C d \, \| w \|^{2} \quad \mbox{for every } w \in L^2(\Ot).
\end{equation}
\end{remark}

\subsection{Main Results for Lipschitz Domains}

We now derive an expression for the right-hand side of~(\ref{eq:T2_tk}) and prove
that in comparison, the remainder is small. We will then use Theorem~\ref{t:asymp} to
obtain a result for eigenvalues of~$\Kt$ near~$\lm^{-1}$. 

\begin{lemma}
\label{l:est_BKt}
If~$w \in L^2(\Ot)$, then
\begin{equation}
\label{eq:BpKtw}
\begin{aligned}
\lm \int_{\Ot} \B\vp \Kt w \, dx = {} &
\int_{\Oe \setminus \Ot} \bigl( 
	(1 - \lm)  \Kttext  \vp
	+ \nabla \Kttext \cdot \nabla \vp 
	\bigr) \, dx\\
& -
\int_{\Ot \setminus \Oe} \bigl( 
	(\Kttext - \Kt w) \widetilde{\vp} 
	+ \nabla \Kttext \cdot \nabla \widetilde{\vp} 
	\bigr) \, dx,
\end{aligned}
\end{equation}
where~$\Kttext \in H^1(\R^n)$ is an extension of~$\Kt^2 w \in H^1(\Ot)$.
\end{lemma}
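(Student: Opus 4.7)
\medskip

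\noindent\textbf{Proof plan for Lemma~\ref{l:est_BKt}.} The plan is to reduce the quantity on the left-hand side to an identity that can be derived from the weak formulations of the two defining Neumann problems, tested against suitable extensions of $\vp$ and $\Kt^2 w$.

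\medskip

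\noindent First I would unpack the definition $\B = \Kt \S - \S \Ke$. Since $\vp \in \Xj[m]$ is an eigenfunction of $\Ke$ with $\Ke \vp = \lm^{-1} \vp$, we have $\S \Ke \vp = \lm^{-1} \S \vp$, so $\B\vp = \Kt \S \vp - \lm^{-1} \S \vp$. Using self-adjointness of $\Kt$ on $\Ht$ and the fact that $\S \vp$ equals $\vp|_{\Ot}$,
\[
\lm \int_{\Ot} \B\vp \cdot \Kt w \, dx
= \lm \int_{\Ot} \S\vp \cdot \Kt^2 w \, dx - \int_{\Ot} \S\vp \cdot \Kt w \, dx
= \int_{\Oe \cap \Ot} \vp \bigl( \lm \Kt^2 w - \Kt w \bigr) dx,
\]
where in the last step I have used that $\vp$ vanishes on $\Ot \setminus \Oe$ under the $L^2$ zero extension. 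This converts the problem to showing that the right-hand side of the lemma equals this single integral over $\Oe \cap \Ot$.

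\medskip

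\noindent Next, the main step is to introduce two weak identities. Using Lemma~\ref{l:extension}\ref{l:lip_ext}, fix an extension $\widetilde{\vp} \in H^1(\R^n)$ of $\vp$; the notation $\Kttext = W$ already denotes an $H^1(\R^n)$ extension of $\Kt^2 w$. From the definition of $\Kt$, the function $\Kt^2 w$ satisfies $(1-\Delta)\Kt^2 w = \Kt w$ in $\Ot$ with $\partial_\nu \Kt^2 w = 0$ on $\partial \Ot$; testing the corresponding weak formulation with $\widetilde{\vp}|_{\Ot}$ gives
\[
\int_{\Oe \cap \Ot} \bigl(\nabla W \cdot \nabla \vp + W \vp\bigr) dx
+ \int_{\Ot \setminus \Oe}\bigl(\nabla W \cdot \nabla \widetilde{\vp} + W \widetilde{\vp}\bigr) dx
= \int_{\Oe \cap \Ot} \Kt w \, \vp \, dx
+ \int_{\Ot \setminus \Oe} \Kt w \, \widetilde{\vp} \, dx.
\]
Dually, $(1-\Delta)\vp = \lm \vp$ in $\Oe$ with $\partial_\nu \vp = 0$ on $\partial \Oe$; testing with $W|_{\Oe}$ yields
\[
\int_{\Oe \cap \Ot} \bigl(\nabla \vp \cdot \nabla W + \vp W\bigr) dx
+ \int_{\Oe \setminus \Ot} \bigl(\nabla \vp \cdot \nabla W + \vp W\bigr) dx
= \lm \int_{\Oe \cap \Ot} \vp W \, dx
+ \lm \int_{\Oe \setminus \Ot} \vp W \, dx.
\]

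\medskip

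\noindent The final step is bookkeeping: solve the first identity for $\int_{\Oe \cap \Ot} \vp \, \Kt w \, dx$, solve the second for $\int_{\Oe \cap \Ot}(\nabla \vp \cdot \nabla W + \vp W)\,dx$, and substitute into $\lm \int_{\Oe \cap \Ot}\vp W \, dx - \int_{\Oe \cap \Ot}\vp \Kt w\, dx$. The $\Oe \cap \Ot$ contributions cancel and what survives is precisely
\[
\int_{\Oe \setminus \Ot}\bigl((1-\lm) W \vp + \nabla W \cdot \nabla \vp\bigr) dx
- \int_{\Ot \setminus \Oe}\bigl((W - \Kt w)\widetilde{\vp} + \nabla W \cdot \nabla \widetilde{\vp}\bigr) dx,
\]
which is the asserted formula. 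The principal obstacle here is not any deep estimate but the careful accounting of domains: one must be vigilant that $\vp$ on the left-hand side is the $L^2$ zero extension (vanishing on $\Ot \setminus \Oe$), while inside the weak formulations one must use the $H^1$ extension $\widetilde{\vp}$, and that the extension $W$ is only constrained to equal $\Kt^2 w$ inside $\Ot$, so the integrand $\nabla W \cdot \nabla \vp + (1-\lm)W\vp$ over $\Oe \setminus \Ot$ cannot be simplified further by appealing to the PDE.
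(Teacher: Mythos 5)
Your proposal is correct, and the resulting identity matches the paper's statement exactly, but the route is genuinely different from the paper's proof. The paper begins with the observation that $(1-\Delta)\B\vp=0$ in $\Oe\cap\Ot$, applies Green's formula there to pass from $\int_{\Oe\cap\Ot}\B\vp(1-\Delta)\Kt^2 w\,dx$ to boundary integrals over $\partial\Oe\cap\Ot$ and $\Oe\cap\partial\Ot$, and then applies Green's formula a second time over $\Ot\setminus\Oe$ and $\Oe\setminus\Ot$ (using the homogeneous Neumann conditions on $\partial\Oe$ and $\partial\Ot$) to convert these back into volume integrals, before finally adding the $\int_{\Ot\setminus\Oe}\B\vp\Kt w$ piece. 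Your approach first collapses the left-hand side algebraically to $\int_{\Oe\cap\Ot}\vp\,(\lm\Kt^2 w-\Kt w)\,dx$ via self-adjointness of $\Kt$ and $\Ke\vp=\lm^{-1}\vp$, and then manufactures the right-hand side by subtracting the two weak formulations (for $\Kt^2 w$ on $\Ot$ tested against $\widetilde{\vp}$, and for $\vp$ on $\Oe$ tested against $W$), so the common $\Oe\cap\Ot$ bulk terms cancel. What your route buys is that it never manipulates boundary integrals at all — a real advantage in Lipschitz domains, where the traces $\partial_\nu\B\vp$ and $\partial_\nu\Kt^2 w$ on the rough corner $\partial(\Oe\cap\Ot)$ need to be interpreted nontangentially and justified via Lemma~\ref{l:lap_u_hom}; the paper's version is more explicit geometrically but carries that technical overhead. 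The bookkeeping in your final step is correct: the two $\int_{\Oe\cap\Ot}(\nabla\vp\cdot\nabla W+\vp W)$ terms cancel and the surviving terms assemble into $\int_{\Oe\setminus\Ot}((1-\lm)W\vp+\nabla W\cdot\nabla\vp)\,dx-\int_{\Ot\setminus\Oe}((W-\Kt w)\widetilde{\vp}+\nabla W\cdot\nabla\widetilde{\vp})\,dx$ as claimed. One small remark worth stating explicitly in a writeup: the testing of the $\Oe$-problem with $W|_{\Oe}$ is legitimate precisely because $W\in H^1(\R^n)$ so its restriction lies in $H^1(\Oe)$, and the testing of the $\Ot$-problem with $\widetilde{\vp}|_{\Ot}$ requires the $H^1$ extension $\widetilde{\vp}$ rather than the zero extension of $\vp$, which you correctly flag at the end.
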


\begin{proof}
We proceed similarly with the proof of Lemma~\ref{l:applip_maineq}.
Since~$(1 - \Delta) \B \vp = 0$ in~$\Oe \cap \Ot$, we obtain using Green's formula that 
\[
\begin{aligned}
\int_{\Oe \cap \Ot} \B \vp \Kt w \, dx
= {} &
\int_{\Oe \cap \Ot} \B \vp (1 - \Delta) \Kt^2 w \, dx\\
= {} &
\int_{\partial(\Oe \cap \Ot)}
\bigl( \Kt^2 w \partial_{\nu} \B \vp - \B \vp \partial_{\nu} \Kt^2 w \bigr) \, dS \\
= {} & \int_{\partial \Oe \cap \Ot}
\bigl( \Kt^2 w \partial_{\nu} \Kt \S \vs  - \B \vp \partial_{\nu} \Kt^2 w \bigr) \, dS\\
& - \int_{\Oe \cap \partial \Ot}
 \Kt^2 w \partial_{\nu} \Ke \vp \, dS.
\end{aligned}
\]
Furthermore,~$(1 - \Delta) \Kt w = w$ in~$\Ot \setminus \Oe$ and~$\partial_{\nu} \Kt \S \vp = 0$
on~$\partial \Ot$. Thus,
\[
\begin{aligned}
\int_{\partial \Oe \cap \Ot}
 \Kt^2 w \partial_{\nu} \Kt\S\vp \, dS
= {} & - \int_{\partial (\Ot \setminus \Oe)}
 \Kt^2 w \partial_{\nu} \Kt\S\vp \, dS\\
= {} & 
- \int_{\Ot \setminus \Oe} \bigl( 
	\Kt^2 w \Kt \S \vp
	+ \nabla \Kt^2 w \cdot \nabla \Kt \S \vp \bigr) \, dx\\
	& - \int_{\Ot \setminus \Oe}  \Kt w \Kt \S \vp \, dx,
\end{aligned}
\]
and analogously, 
\[
\begin{aligned}
- \int_{\partial \Oe \cap \Ot}
\B \vp \partial_{\nu} \Kt^2 w \, dS
= {} &
\int_{\partial( \Ot \setminus \Oe)}
\Bext \vp \partial_{\nu} \Kt^2 w  \, dS\\
= {} & 
\int_{\Ot \setminus \Oe} \bigl( 
	\Bext\vp \Kt^2 w 
	+ \nabla \Bext\vp \cdot \nabla \Kt^2 w \bigr) \, dx\\
	& - \int_{\Ot \setminus \Oe} \Bext \vp \Kt w  \, dx,
\end{aligned}
\]
where~$\Bext u = \Kt \S u - \S \widetilde{\Ke u}$ for~$u \in L^2(\Oe)$
satisfies
\[
\begin{aligned}
\| \Bext u - \B u \|_{L^2(\Ot)}^2 
&= \int_{\Ot \setminus \Oe} | \widetilde{\Ke u} |^2 \, dx \\
&\leq C d \int_{\partial \Oe} |\N( \Ke u )|^2 \, dx' 
\leq C d \, \|u \|_{L^2(\Oe)}^2
\end{aligned}
\]
by Lemma~\ref{l:extension}(i) and inequality~(\ref{eq:est_max_Kj}),

Similar to the treatment of the previous boundary integrals,
it follows from the facts that~$(1 - \Delta) \Ke \vp = \vp$ in~$\Oe \setminus \Ot$ and~$\partial_{\nu} \Ke \vp = 0$
on~$\partial \Oe$, that
\[
\begin{aligned}
 - \int_{\Oe \cap \partial \Ot} \Kt^2 w \partial_{\nu} \Ke \vp \, dS
&= 
 \int_{\partial(\Ot \setminus \Oe)} \Kttext \partial_{\nu} \Ke \vp \, dS\\
&= 
\int_{\Oe \setminus \Ot} \bigl( 
	\Kttext \Ke \vp 
	+ \nabla \Kttext \cdot \nabla \Ke\vp 
	- \Kttext \vp \bigr) \, dx.
\end{aligned}
\]
We have now proved that
\[
\begin{aligned}
\int_{\Ot} \B\vp \Kt w \, dx = {} &
 \int_{\Oe \cap \Ot} \B\vp \Kt w \, dx + \int_{\Ot \setminus \Oe} \B\vp \Kt w \, dx\\
= {} &
\lm^{-1} \int_{\Oe \setminus \Ot} \bigl( 
	(1 - \lm)  \Kttext  \vp
	+ \nabla \Kttext \cdot \nabla \vp 
	\bigr) \, dx\\
& -
\lm^{-1} \int_{\Ot \setminus \Oe} \bigl( 
	(\Kt^2 w - \Kt w) \widetilde{\vp} 
	+ \nabla \Kt^2 w \cdot \nabla \widetilde{\vp} 
\bigr) \, dx. 
\end{aligned}
\]
This is the equality in~(\ref{eq:BpKtw}).
\end{proof}

\begin{lemma}
\label{l:est_KtB}
There exists a constant~$C > 0$ such that
\begin{equation}
\label{eq:est_KtB}
\| \Kt \B \vp \|_{L^2(\Ot)}^2 \leq C d^{\, 3/2} \, \| \vp \|_{L^2(\Oe)}^2
\quad \mbox{for every } \vp \in \Xj[m].
\end{equation}
\end{lemma}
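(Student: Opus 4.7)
The plan is to use self-adjointness of $\Kt$ on $L^2(\Ot)$ to write
\[
\lm \| \Kt \B \vp \|_{L^2(\Ot)}^2 = \lm \int_{\Ot} \B \vp \cdot \Kt(\Kt \B \vp) \, dx,
\]
which matches the left-hand side of~(\ref{eq:BpKtw}) with $w = \Kt \B \vp$ and $\Kttext$ an extension of $\Kt^2 w = \Kt^3 \B \vp$ to $\R^n$. It then suffices to bound the four integrals on the right-hand side of~(\ref{eq:BpKtw}) by an expression of the form $C d^{3/2} \| \vp \|^2 + \tfrac{1}{2} \lm \| \Kt \B \vp \|^2$, so that the second summand can be absorbed back into the left-hand side.

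The zeroth-order integrals $\int_{\Oe \setminus \Ot}(1-\lm) \Kttext \vp\, dx$ and $\int_{\Ot \setminus \Oe}(\Kttext - \Kt w) \widetilde{\vp}\, dx$ are supported on strips of measure $O(d)$. Combining the eigenfunction bound $\| \vp \|_{L^{\infty}(\Oe)} \leq C \| \vp \|_{L^2(\Oe)}$ (as used already in Section~\ref{s:lipeps}) with the non-tangential maximal function estimates $\| \N(\Kt^2 w) \|_{L^2(\partial \Ot)} + \| \N(\Kt w) \|_{L^2(\partial \Ot)} \leq C \| w \|$, obtained by iterating Lemma~\ref{l:defK} and transferring the bound to $\Kttext$ via Lemma~\ref{l:extension}(i), these terms are of order $C d \| \vp \| \| w \|$, which is $C d^{3/2} \| \vp \|^2 + \tfrac{1}{4} \lm \| w \|^2$ after a weighted Young inequality.

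The delicate step is the gradient pairing $\int_{\Oe \setminus \Ot} \nabla \Kttext \cdot \nabla \vp\, dx$ and its $\Ot \setminus \Oe$ analogue, since $\nabla \vp$ is not $L^{\infty}$ in a general Lipschitz domain and a direct Cauchy--Schwarz estimate only yields a factor $d^{1/2}$. I would integrate by parts on $\Oe \setminus \Ot$ using $(1-\Delta) \vp = \lm \vp$, reducing the volume term to the lower-order integral $(\lm - 1) \int_{\Oe \setminus \Ot} \Kttext \vp\, dx$ already controlled above, plus a surface integral over $\partial(\Oe \setminus \Ot)$. Since $\partial_{\nu} \vp = 0$ on $\partial \Oe$, only the interior portion $\Oe \cap \partial \Ot$ contributes, and one pairs the trace bound $\| \Kttext \|_{L^2(\Oe \cap \partial \Ot)} \leq C \| w \|$ (from Lemma~\ref{l:extension}(i)) with $\| \partial_{\nu} \vp \|_{L^2(\Oe \cap \partial \Ot)}$. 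The crux is to extract an additional factor $d^{1/2}$ from the latter norm by exploiting that $\Oe \cap \partial \Ot$ lies within distance $d$ of $\partial \Oe$, where $\partial_{\nu} \vp$ vanishes; this gain must come from integration along normal rays together with the non-tangential control $\| \N(\nabla \vp) \|_{L^2(\partial \Oe)} \leq C \| \vp \|$ from Lemma~\ref{l:defK} applied to the eigenfunction equation, since second-order $L^{\infty}$ regularity of $\vp$ is unavailable in general Lipschitz domains. This $d^{1/2}$ improvement is the principal obstacle; once it is in hand, summing all contributions yields $\lm \| \Kt \B \vp \|^2 \leq C d^{3/2} \| \vp \|^2 + \tfrac{1}{2} \lm \| \Kt \B \vp \|^2$, and absorption completes the proof.
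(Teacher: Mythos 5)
Your route differs from the paper's in two ways, one incidental and one substantive.  The paper takes $w = \B\vp$ in Lemma~\ref{l:est_BKt}, not $w = \Kt\B\vp$: it starts from the energy identity
\[
\int_{\Ot}\bigl( (\Kt\B\vp)^2 + |\nabla\Kt\B\vp|^2 \bigr)\,dx = \int_{\Ot}\B\vp\,\Kt\B\vp\,dx,
\]
bounds the right-hand side by $Cd\,\|\B\vp\|_{L^2(\Ot)}\|\vp\|_{L^2(\Oe)}$ via Lemma~\ref{l:est_BKt} and the nontangential maximal estimates, and closes with $\|\B\vp\|\leq Cd^{1/2}\|\vp\|$ from Lemma~\ref{l:applip_maineq}.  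No absorption is needed.  Your absorption scheme with $w=\Kt\B\vp$ is also viable in principle, but you have created a problem for yourself in the gradient term that is not actually there.

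The ``principal obstacle'' you name --- the need to gain an extra $d^{1/2}$ from $\|\partial_\nu\vp\|_{L^2(\Oe\cap\partial\Ot)}$ --- is both unjustified as you sketch it and unnecessary.  The direct Cauchy--Schwarz estimate does \emph{not} lose a $d^{1/2}$ here: inequality~(\ref{eq:est_max_Kj}) applied to $\vp=\lm\Ke\vp$ gives $\|\N(\nabla\vp)\|_{L^2(\partial\Oe)}\leq C\|\vp\|_{L^2(\Oe)}$, and since $\Oe\setminus\Ot$ is contained in an $O(d)$-thin collar of $\partial\Oe$, this yields
$\int_{\Oe\setminus\Ot}|\nabla\vp|^2\,dx\leq Cd\,\|\vp\|^2$.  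Paired with the analogous bound $\int_{\Oe\setminus\Ot}|\nabla\Kttext|^2\,dx\leq Cd\,\|w\|^2$ coming from Lemma~\ref{l:extension}(i) (and the same on $\Ot\setminus\Oe$ with $\widetilde\vp$), Cauchy--Schwarz already gives the gradient term the factor $Cd\,\|\vp\|\,\|w\|$, which absorbs cleanly.  Your concern about second-order $L^\infty$ regularity of $\vp$ is a red herring: only the $L^2$ control of $\nabla\vp$ on the thin strip via the nontangential maximal function is needed, and it is exactly what the paper's Lipschitz framework supplies.  The integration-by-parts detour and the hoped-for decay of $\partial_\nu\vp$ near $\partial\Oe$ are not supported by the nontangential estimates (which bound boundary traces, not smallness on nearby interior surfaces) and should be dropped.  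Once you replace that step by the direct maximal-function bound, your argument closes; in fact it delivers $\|\Kt\B\vp\|^2\leq Cd^2\|\vp\|^2$, marginally stronger than the stated $d^{3/2}$, though the improvement is immaterial for $\rr$ since the term $\eps\|\B\vp\|^2$ already contributes $O(d^{3/2})$.
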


\begin{proof}
Since~$\Kt \B \vp$ is a solution to~$(1 - \Delta)\Kt \B \vp = \B \vp$ with~$\partial_{\nu} \Kt \B \vp = 0$
on~$\partial \Ot$, we obtain that
\[
\int_{\Ot} \bigl( (\Kt \B \vp)^2 + |\nabla \Kt \B \vp|^2 \bigr) \, dx = 
\int_{\Ot} \B \vp \Kt \B \vp \, dx.
\]
Let~$\Kttext$ be given 
as in Lemma~\ref{l:est_BKt} with~$w = \B \vp$. Then
\[
\| N(\Kttext) \|_{L^2(\partial \Ot)} + \| N(\nabla \Kttext) \|_{L^2(\partial \Ot)} \leq C \| \B \vp \|_{L^2(\Ot)}
\]
and Lemma~\ref{l:est_BKt} implies that
\[
\int_{\Ot} \B \vp \Kt \B \vp \, dx \leq C d \, \| \B \vp \|_{L^2(\Ot)} \| \vp \|_{L^2(\Oe)}.
\]
Since~$\| \B \vp \|_{L^2(\Ot)} \leq C d^{1/2} \, \| \vp \|_{L^2(\Oe)}$ according to Lemma~\ref{l:applip_maineq},
we obtain the inequality in~(\ref{eq:est_KtB}). 
\end{proof}

We now have all the tools available to prove our main result for Lipschitz domains,
i.e., expressing the difference between eigenvalues~$\lm[m]^{-1}$ and~$\um[k]^{-1}$ in known terms.

\begin{proposition}
\label{p:lipschitz}
Suppose that~$\Oe$ and~$\Ot$ are Lipschitz domains in the sense of Section~\ref{s:domains}.
Then
\begin{equation}
\label{eq:lip_main}
\lm[m]^{-1} - \um[k]^{-1} = \tk + O(d^{3/2})
\quad \mbox{for } k = 1,2,\ldots,\Jm. 
\end{equation}
Here,~$\tau = \tk$ is an eigenvalue of
\begin{equation}
\label{eq:lip_main_eig}
\begin{aligned}
\tau \ip{\vp}{\vs}{1} = {} &
\lm^{-1} \int_{\Oe \setminus \Ot} \bigl( 
	(1 - \lm) \widetilde{\Kt \S \vp}  \vs
	+ \nabla \widetilde{\Kt \S \vp} \cdot \nabla \vs 
	\bigr) \, dx\\
& - 
\lm^{-1} \int_{\Ot \setminus \Oe} \bigl( 
	(1 - \lm) (\Kt \S \vp) \widetilde{\vs} 
	+ \nabla \Kt \S \vp \cdot \nabla \widetilde{\vs}  
	\bigr) \, dx 
\end{aligned}
\end{equation}
for all $\vs \in \Xj[m]$, where~$\vp \in \Xj[m]$. 
Moreover,~$\tk[1], \tk[2], \ldots, \tk[\Jm]$ in~{\rm(}\ref{eq:lip_main}{\rm)}
run through all eigenvalues of~{\rm(}\ref{eq:lip_main_eig}{\rm)} counting their multiplicities.
\end{proposition}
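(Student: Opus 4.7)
My plan is to specialize Theorem~\ref{t:asymp} to the Lipschitz setting developed in Sections~\ref{s:def_lipdom}--\ref{s:lipeps} and recast the abstract eigenvalue problem as~(\ref{eq:lip_main_eig}). The estimates of Section~\ref{s:lipeps} allow the choice $\eps\le Cd^{1/2}$, and combining Lemma~\ref{l:applip_maineq} ($\|\B\vp\|^2\le Cd\|\vp\|^2$) with Lemma~\ref{l:est_KtB} ($\|\Kt\B\vp\|^2\le Cd^{3/2}\|\vp\|^2$) gives $\rr\le C(d^{3/2}+d^{1/2}\cdot d)=Cd^{3/2}$. Theorem~\ref{t:asymp} therefore produces eigenvalues $\tau$ of the abstract problem $\tau\ip{\S\vp}{\S\vs}{2}=\lm\ip{\B\vp}{\Kt\S\vs}{2}$ satisfying $\um[k]^{-1}-\lm[m]^{-1}=\tau+O(d^{3/2}+|\tau|\eps)$.

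To recover the concrete form, I rewrite $\lm\ip{\B\vp}{\Kt\S\vs}{2}$ via Lemma~\ref{l:est_BKt}. Self-adjointness of $\Kt$ together with $\Ke\vp=\lm^{-1}\vp$ yields the symmetry $\ip{\B\vp}{\Kt\S\vs}{2}=\ip{\B\vs}{\Kt\S\vp}{2}$, so I may apply Lemma~\ref{l:est_BKt} with $w=\S\vp$ and with $\vs$ taking the role of the Lemma's test function. This expresses $\lm\ip{\B\vp}{\Kt\S\vs}{2}$ as strip integrals against the extension $\widetilde{\Kt^2\S\vp}$. The algebraic identity $\Kt^2\S\vp=\lm^{-1}\Kt\S\vp+\Kt\B\vp$ (a direct consequence of $\B\vp=\Kt\S\vp-\lm^{-1}\S\vp$), together with a linear choice of extensions, gives $\widetilde{\Kt^2\S\vp}=\lm^{-1}\widetilde{\Kt\S\vp}+\widetilde{\Kt\B\vp}$. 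The first summand, multiplied by the overall factor $\lm^{-1}$, reproduces precisely the right-hand side of~(\ref{eq:lip_main_eig}), while the second contributes a remainder bilinear form $E(\vp,\vs)$ that must be controlled.

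The key estimate is $|E(\vp,\vs)|\le Cd^{3/2}\|\vp\|\|\vs\|$. Because $\Kt\B\vp$ solves a Neumann problem with source $\B\vp\in L^2(\Ot)$, Lemma~\ref{l:defK} produces non-tangential maximal function bounds dominated by $\|\B\vp\|_{L^2(\Ot)}\le Cd^{1/2}\|\vp\|$, and Lemma~\ref{l:extension}(i) transfers these to $\widetilde{\Kt\B\vp}$. Since $\Oe\setminus\Ot$ and $\Ot\setminus\Oe$ each sit in tubular neighborhoods of width $O(d)$, integrating the squared maximal function over the strips yields
\[
\|\widetilde{\Kt\B\vp}\|_{L^2(\Oe\setminus\Ot)}+\|\nabla\widetilde{\Kt\B\vp}\|_{L^2(\Oe\setminus\Ot)}\le Cd\|\vp\|,
\]
and analogously on $\Ot\setminus\Oe$. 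Paired with the complementary $O(d^{1/2})$ bounds on $\vs$ and $\nabla\vs$ (via the $L^\infty$ estimate for eigenfunctions and the non-tangential bound on $\nabla\vs$ coming from $\Ke\vs=\lm^{-1}\vs$), together with the analogous bounds for the extension $\widetilde{\vs}$, this produces the $O(d^{3/2})$ estimate.

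Finally, the mass forms differ by $\ip{\S\vp}{\S\vs}{2}-\ip{\vp}{\vs}{1}=-\int_{\Oe\setminus\Ot}\vp\vs\,dx$, which is $O(d)\|\vp\|\|\vs\|$ by~(\ref{eq:est_vp2_subdom}); and the same strip estimates applied directly to~(\ref{eq:lip_main_eig}) yield $|F(\vp,\vs)|\le Cd\|\vp\|\|\vs\|$, so $|\tau|\le Cd$ and therefore both $|\tau|\eps\le Cd^{3/2}$ and $|\tau|\cdot O(d)\le Cd^2$. Standard perturbation theory for the generalized eigenvalue problems on the finite-dimensional space $\Xj[m]$ then identifies the $\tau$ from Theorem~\ref{t:asymp} with the $\tk$ of~(\ref{eq:lip_main_eig}) up to $O(d^{3/2})$, giving the asymptotic~(\ref{eq:lip_main}). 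The main technical obstacle is the sharp bound on $E$: the naive estimate using only $\|\B\vp\|_{L^2(\Ot)}\le Cd^{1/2}$ gives $O(d)$, and one must invoke the improved Lemma~\ref{l:est_KtB} for $\Kt\B\vp$ in tandem with the non-tangential machinery to save the extra factor of $d^{1/2}$.
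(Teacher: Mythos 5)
Your proposal is correct and follows essentially the same route as the paper: choose $\eps \le Cd^{1/2}$ from Lemma~\ref{l:applip_maineq}, bound $\rho = O(d^{3/2})$ via Lemma~\ref{l:est_KtB}, apply Theorem~\ref{t:asymp}, and convert $\lm\ip{\B\vp}{\Kt\S\vs}{2}$ to the strip integrals in~(\ref{eq:lip_main_eig}) via Lemma~\ref{l:est_BKt} with $\Kt^2\S\vs = \Kt\B\vs + \lm^{-1}\Kt\S\vs$, using the non-tangential maximal function bounds and the $O(d)$ strip width to control the $\widetilde{\Kt\B\vs}$ remainder. One small imprecision in your concluding remark: the $O(d^{3/2})$ bound on the remainder $E$ comes from the non-tangential machinery applied to $\Kt\B\vs$ with the coarse $\|\B\vs\|_{L^2}\le Cd^{1/2}$ bound (Lemma~\ref{l:est_KtB} is not needed for $E$; it is what upgrades $\rho$ from $O(d)$ to $O(d^{3/2})$), but since the body of your argument correctly uses the non-tangential estimates for $E$ and Lemma~\ref{l:est_KtB} only for $\rho$, the proof is sound.
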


\begin{proof}
We express~$\Kt^2 \S \vs$ in terms of the operator~$\B$:
\[
\Kt^2 \S \vs = \Kt (\B \vs + \S \Ke \vs) = \Kt \B \vs + \B \Ke \vs + \S \Ke^2 \vs. 
\]
If~$\vs \in \Xj[m]$, then
\[
\Kt^2 \S \vs = \Kt \B \vs + \lm^{-1} \Kt \S \vs.
\]
Put~$\Kttext = \widetilde{\Kt \B \vs} + \lm^{-1} \widetilde{\Kt \S \vs}$,
where~$\widetilde{\Kt \B \vs}$ and~$\widetilde{\Kt \S \vs}$
are the extensions of~$\Kt \B \vs$ and~$\Kt \S \vs$, respectively, given by Lemma~\ref{l:extension}(i).
Then~$\Kttext \in H^1(\R^n)$ and
\begin{equation}
\label{eq:est_Kttext}
\begin{aligned}
\| N(\Kttext) \|_{L^2(\partial \Ot)} + \|N(\nabla \Kttext)\|_{L^2(\partial \Ot)}
\leq {} &
C \bigl( \| \B \vs \|_{L^2(\Ot)}  + \| \S \vs \|_{L^2(\Ot)} \bigr)\\
\leq {} & C \| \vs \|_{L^2(\Oe)}.
\end{aligned}
\end{equation} 
Lemma~\ref{l:est_BKt} and~(\ref{eq:est_Kttext}) proves that
\begin{equation}
\label{eq:est_BvpKtSvs}
\begin{aligned}
\lm^2 \int_{\Ot} \B \vp \Kt \S \vs \, dx
= {} &
\int_{\Oe \setminus \Ot} \bigl( 
	(1 - \lm) \widetilde{\Kt \S \vs}  \vp
	+ \nabla \widetilde{\Kt \S \vs} \cdot \nabla \vp 
	\bigr) \, dx\\
& -
\int_{\Ot \setminus \Oe} \bigl( 
	(1 - \lm) (\Kt \S \vs) \widetilde{\vp}  
	+ \nabla \Kt \S \vs \cdot \nabla \widetilde{\vp}  
	\bigr)  \, dx\\
&+ O(d^{3/2}) \|\vp\|_{L^2(\Oe)} \|\vs\|_{L^2(\Oe)}.
\end{aligned}
\end{equation}
Observe also that~(\ref{eq:est_BvpKtSvs}) implies that
\begin{equation}
\label{eq:rough_BvpKtSvs}
|\ip{\B\vp}{\Kt\S\vs}{2}| \leq C d \| \vp \|_{L^2(\Oe)}  \| \vs \|_{L^2(\Oe)}.
\end{equation}

Lemmas~\ref{l:applip_maineq} and~\ref{l:est_KtB} imply that
\[
\rho = \lm^{-1} \sup_{\| \vp \| = 1} \bigl( \| \Kt \B \vp \|^2 + \eps \| \B \vp\|^2 \bigr)
= O(d^{\,3/2}).
\]
Thus, Theorem~\ref{t:asymp} proves that we obtain
\begin{equation}
\label{eq:lip_aaa}
\mu_k^{-1} - \lm[m]^{-1} = \tk + O(\rho + |\tk|d^{1/2}) = \tk + O(d^{\,3/2}) 
\end{equation}
since
\begin{equation}
\label{eq:lip_aaa2}
\tk \ip{\vp}{\vs}{2} = \tk \ip{\S \vp}{\S \vs}{2} + O(|\tk|d) = \lm \ip{\B\vp}{\Kt\S\vs}{2} + O(d^{\,2}).
\end{equation}
Now, equations~(\ref{eq:lip_aaa}),~(\ref{eq:lip_aaa2}), and~(\ref{eq:est_BvpKtSvs}), imply~(\ref{eq:lip_main}).
\end{proof}

\noindent From~(\ref{eq:rough_BvpKtSvs}) we obtain the following corollary.

\begin{corollary}
\label{c:lipschitz}
With the same assumptions as in Proposition~\ref{p:lipschitz}, there exists a constant~$C$, independent of~$d$, such that
\[
|\lm[m]^{-1} - \um[k]^{-1}| \leq C d 
\]
for~$k = 1,2,\ldots,\Jm[m]$.
\end{corollary}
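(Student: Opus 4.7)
The approach is to combine the asymptotic formula of Proposition~\ref{p:lipschitz} with the rough bound~(\ref{eq:rough_BvpKtSvs}) that was already established during its proof. Proposition~\ref{p:lipschitz} gives
\[
\um[k]^{-1} - \lm[m]^{-1} = \tk + O(d^{3/2})
\]
for each $k=1,2,\ldots,\Jm[m]$, so it suffices to show that each $\tk$ is itself of order $d$; the remainder $O(d^{3/2})$ is then absorbed into $O(d)$ for $d$ small enough.

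To control $|\tk|$, I would exploit the compact form of the spectral problem for $\tau$, namely $\tau \ip{\S\vp}{\S\vs}{2} = \lm\ip{\B\vp}{\Kt\S\vs}{2}$ for $\vp,\vs \in \Xj[m]$. Since $\Kt$ is self-adjoint and $\Ke\vp = \lm^{-1}\vp$ on $\Xj[m]$, expanding $\B = \Kt\S - \S\Ke$ shows that the right-hand side is symmetric in $\vp$ and $\vs$; hence $\tk$ is real and admits the Rayleigh-quotient characterization
\[
|\tk| \leq \sup_{\vp \in \Xj[m],\, \vp \neq 0} \frac{|\lm\ip{\B\vp}{\Kt\S\vp}{2}|}{\ip{\S\vp}{\S\vp}{2}}.
\]
The numerator is bounded by $\lm C d\|\vp\|^2$ directly from~(\ref{eq:rough_BvpKtSvs}), while the denominator is bounded below by $(1-\eps)\|\vp\|^2$ by property~\ref{prop:est_vp_svp} from Section~\ref{s:proof_prop}. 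Since Section~\ref{s:lipeps} permits us to take $\eps = Cd^{1/2}$, for $d$ sufficiently small we obtain $|\tk| \leq C' d$ with $C'$ independent of $d$ and $k$.

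Combining the bound $|\tk| \leq C'd$ with the asymptotic formula of Proposition~\ref{p:lipschitz} immediately yields $|\lm[m]^{-1} - \um[k]^{-1}| \leq Cd$, which is the assertion of the corollary. There is no substantive technical obstacle: all the heavy analytic work (the Hardy-type estimates of Lemmas~\ref{l:lap_u_hom}--\ref{l:est_KtB} and the application of Theorem~\ref{t:asymp}) has been carried out in Section~\ref{s:lipeps} and Proposition~\ref{p:lipschitz}. The corollary amounts to the simple observation that the eigenvalues $\tk$ inherit the linear rate in $d$ directly from the rough estimate~(\ref{eq:rough_BvpKtSvs}), and that this linear rate dominates the $d^{3/2}$ remainder term in the asymptotic expansion.
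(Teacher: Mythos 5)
Your proof is correct and follows essentially the same line as the paper, which simply remarks that the corollary follows from the rough bound~(\ref{eq:rough_BvpKtSvs}): that bound forces $\tau_k = O(d)$, and the $O(d^{3/2})$ remainder in Proposition~\ref{p:lipschitz} is then subdominant. One small imprecision worth noting: the $\tau_k$ appearing in Proposition~\ref{p:lipschitz} are eigenvalues of~(\ref{eq:lip_main_eig}), not of the compact form~(\ref{eq:T2_tk}) that you bound via the Rayleigh quotient; however, the same $O(d)$ bound holds for~(\ref{eq:lip_main_eig}) as well (its right-hand side is controlled by~(\ref{eq:est_BvpKtSvs}) together with~(\ref{eq:rough_BvpKtSvs}), while the left-hand side bilinear form $\ip{\vp}{\vs}{1}$ is already normalized), so the conclusion stands.
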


If~$\Ot \subset \Oe$, 
the solution~$v_{\vp} = \B \vp$ to 
$(1 - \Delta) v_{\vp} = 0$ and~$\partial_{\nu} v_{\vp} = -\lm^{-1} \partial_{\nu} \vp$ for~$\vp \in \Xj[m]$ 
can be used to formulate the results above in terms of this solution. This can be an advantage since in
many cases these type of partial differential equations are well studied and explicit solutions or estimates
for solutions are known. Moreover, we also present an example
in Section~\ref{s:ex} based on this proposition, proving that the condition~$\alpha > 0$ is sharp for our result
in the~$C^{1,\alpha}$-case.

\begin{proposition}
\label{p:lipschitz_subdom}
Suppose that~$\Oe$ and~$\Ot$ are Lipschitz domains in the sense of Section~\ref{s:domains} and that~$\Ot \subset \Oe$.
Then
\begin{equation}
\label{eq:lip_main_subdom}
\lm[m]^{-1} - \um[k]^{-1} = \tk + O(d^{3/2})
\quad \mbox{for } k = 1,2,\ldots,\Jm. 
\end{equation}
Here,~$\tau = \tk$ is an eigenvalue of
\begin{equation}
\label{eq:lip_main_eig_subdom}
\begin{aligned}
\tau \ip{\vp}{\vs}{1} = {} &
\int_{\Ot} \bigl( \lm v_{\vp} v_{\vs} + v_{\vp} \vs \bigr) \, dx
\end{aligned}
\end{equation}
for all $\vs \in \Xj[m]$, where~$\vp \in \Xj[m]$. 
Moreover,~$\tk[1], \tk[2], \ldots, \tk[\Jm]$ in~{\rm(}\ref{eq:lip_main_subdom}{\rm)}
run through all eigenvalues of~{\rm(}\ref{eq:lip_main_eig_subdom}{\rm)} counting their multiplicities.
\end{proposition}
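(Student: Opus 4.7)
The plan is to rerun the argument of Proposition~\ref{p:lipschitz} but, instead of converting $\lm\ip{\B\vp}{\Kt\S\vs}{2}$ into volume integrals by integration by parts, to express it directly through the auxiliary function $v_\vp = \B\vp$. First I would unpack $v_\vp$. Since $\Ot \subset \Oe$, the projector $\St$ is simply the restriction $\vp \mapsto \vp|_{\Ot}$, and for $\vp \in \Xj[m]$ we have $\Ke \vp = \lm^{-1}\vp$, so
\[
v_\vp = \B\vp = \Kt \S \vp - \lm^{-1} \vp \quad \text{on } \Ot.
\]
Using $(1-\Delta)\Kt \S \vp = \vp$ and $(1-\Delta)\vp = \lm \vp$ we get $(1-\Delta)v_\vp = 0$ in $\Ot$; since $\partial_\nu \Kt \S \vp = 0$ on $\partial\Ot$, we also obtain $\partial_\nu v_\vp = -\lm^{-1} \partial_\nu \vp$ on $\partial\Ot$, in agreement with the description in the statement. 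In particular $\Kt \S \vs = v_\vs + \lm^{-1}\vs$ on $\Ot$, so by pure algebra
\[
\lm \ip{\B \vp}{\Kt \S \vs}{2} = \lm \int_{\Ot} v_\vp (v_\vs + \lm^{-1}\vs)\, dx = \int_{\Ot}(\lm v_\vp v_\vs + v_\vp \vs)\, dx,
\]
and this is the right-hand side of (\ref{eq:lip_main_eig_subdom}). Importantly, the identity is exact.

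Next I would apply Theorem~\ref{t:asymp}. Because $\Ot \subset \Oe$, the remark after Lemma~\ref{l:applip_maineq} gives the improved bound $\|\Kt w - \S\Ke\Ss w\|^2 \leq C d\|w\|^2$, which together with (\ref{eq:est_vp2_subdom}) lets us take $\eps = Cd$. Lemmas~\ref{l:applip_maineq} and~\ref{l:est_KtB} give $\rho \leq C d^{\,3/2}\|\vp\|^2$, exactly as in the proof of Proposition~\ref{p:lipschitz}. Theorem~\ref{t:asymp} therefore yields
\[
\um[k]^{-1} = \lm^{-1} + \widehat{\tau}_k + O\bigl(d^{\,3/2} + |\widehat{\tau}_k| d \bigr),
\]
where $\widehat{\tau}_k$ are the eigenvalues of $\tau \ip{\S\vp}{\S\vs}{2} = \lm \ip{\B\vp}{\Kt \S \vs}{2}$. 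The bound (\ref{eq:rough_BvpKtSvs}) established in the proof of Proposition~\ref{p:lipschitz} gives $|\widehat{\tau}_k| \leq Cd$, so the whole remainder is $O(d^{\,3/2})$.

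The last step is to compare $\widehat{\tau}_k$ with the eigenvalues $\tk$ of (\ref{eq:lip_main_eig_subdom}). By the exact identity above, the two generalized eigenvalue problems on the finite-dimensional space $\Xj[m]$ share the same right-hand-side bilinear form; they differ only in that the left-hand side uses $\ip{\S\vp}{\S\vs}{2} = \int_{\Ot}\vp\vs\, dx$ in one case and $\ip{\vp}{\vs}{1} = \int_{\Oe}\vp\vs\, dx$ in the other. Their difference is $\int_{\Oe \setminus \Ot}\vp\vs\, dx$, which by (\ref{eq:est_vp2_subdom}) is bounded by $Cd\|\vp\|\|\vs\|$. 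A standard finite-dimensional perturbation argument then gives $|\widehat{\tau}_k - \tk| \leq C d|\widehat{\tau}_k| = O(d^{\,2})$, which is absorbed by the $O(d^{\,3/2})$ error and yields (\ref{eq:lip_main_subdom}). The main obstacle I anticipate is bookkeeping: keeping track of which bilinear form defines which $\tau_k$, and making sure the successive $O(d)$ replacements do not spoil the $O(d^{\,3/2})$ accuracy. This is controlled by the fact that the transformation of the right-hand side into the $v_\vp$ form is an exact identity rather than an asymptotic one, so no additional error is introduced at that step.
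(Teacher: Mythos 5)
Your proof is correct, and it cleanly fills a gap in the paper: Proposition~\ref{p:lipschitz_subdom} is stated without an explicit proof, only preceded by a remark that $v_{\vp}=\B\vp$ solves $(1-\Delta)v_{\vp}=0$ with $\partial_\nu v_\vp=-\lm^{-1}\partial_\nu\vp$. You supply the missing argument and you do it in a natural way, so this counts as essentially the intended approach.

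The one thing worth emphasizing is that your route is more economical than what the proof of Proposition~\ref{p:lipschitz} does in the general (non-nested) case. There, the right-hand side $\lm\ip{\B\vp}{\Kt\S\vs}{2}$ is converted into integrals over $\Oe\setminus\Ot$ and $\Ot\setminus\Oe$ through Lemma~\ref{l:est_BKt}, which involves extensions and incurs a genuine $O(d^{3/2})$ remainder. Since $\Ot\subset\Oe$ you observe that $\Kt\S\vs = v_\vs + \lm^{-1}\vs$ holds exactly on $\Ot$, giving
\[
\lm\ip{\B\vp}{\Kt\S\vs}{2} = \int_{\Ot}\bigl(\lm v_\vp v_\vs + v_\vp\vs\bigr)\,dx
\]
with no error at all; Lemma~\ref{l:est_BKt} is not needed at this step. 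The remaining error budget comes exclusively from Theorem~\ref{t:asymp} (with $\eps=Cd$ via the remark after Lemma~\ref{l:applip_maineq} and $\rho=O(d^{3/2})$ from Lemmas~\ref{l:applip_maineq} and~\ref{l:est_KtB}) and from trading $\ip{\S\vp}{\S\vs}{2}$ for $\ip{\vp}{\vs}{1}$ on the left-hand side. Your final comparison of the two generalized eigenvalue problems is terse; it would be safer to phrase it via the Rayleigh-quotient/min-max characterization on $\Xj[m]$: since $\ip{\vp}{\vs}{1}-\ip{\S\vp}{\S\vs}{2}=\int_{\Oe\setminus\Ot}\vp\vs\,dx$ and $\ip{\S\vp}{\S\vp}{2}\ge(1-\eps)\|\vp\|^2$, the two Rayleigh quotients differ multiplicatively by $1+O(d)$, so corresponding eigenvalues satisfy $|\widehat{\tau}_k-\tk|\le Cd\,|\tk| = O(d^2)$, which is indeed absorbed. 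This is the same level of informality the paper itself uses in the proof of Proposition~\ref{p:lipschitz}, so the proof stands.
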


%%%%%%%%%%%%%%%%%%%%%%%%%%%%%%%%%%%%%%%%%%%%%%%%%%%%%%%%%%%%%%%%%%
%%%%%%%%%%%%%%%%%%%%%%%%%%%%%%%%%%%%%%%%%%%%%%%%%%%%%%%%%%%%%%%%%%
%%%%%%%%%%%%%%%%%%%%%%%%%%%%%%%%%%%%%%%%%%%%%%%%%%%%%%%%%%%%%%%%%%
%%%%%%%%%%%%%%%%%%%%%%%%%%%%%%%%%%%%%%%%%%%%%%%%%%%%%%%%%%%%%%%%%%
%%%%%%%%%%%%%%%%%%%%%%%%%%%%%%%%%%%%%%%%%%%%%%%%%%%%%%%%%%%%%%%%%%
%%%%%%%%%%%%%%%%%%%%%%%%%%%%%%%%%%%%%%%%%%%%%%%%%%%%%%%%%%%%%%%%%%
%%%%%%%%%%%%%%%%%%%%%%%%%%%%%%%%%%%%%%%%%%%%%%%%%%%%%%%%%%%%%%%%%%

\subsection{The Case of a \boldmath{$C^{1,\alpha}$} Domain}
\label{s:holder}

We now consider the case when~$\Oe$ and~$\Ot$ are~$C^{1,\alpha}$ domains, where~$0 < \alpha < 1$.

\begin{lemma}
\label{l:holder_sol}
If~$\Oe$ is a $C^{1,\alpha}$-domain, then for every~$u \in L^{\infty}(\R^n)$,
$\Ke \Se u$ belongs to~$C^{1,\alpha}(\Oe)$.
\end{lemma}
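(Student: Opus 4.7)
The function $w := \Ke \Se u$ is, by definition, the weak solution of the Neumann problem $(1 - \Delta) w = f$ in~$\Oe$ with~$\partial_\nu w = 0$ on~$\partial \Oe$, where~$f = \Se u \in L^\infty(\Oe)$ since~$u \in L^\infty(\R^n)$ and~$\Oe$ is bounded. The plan is to combine an $L^\infty$ bound on~$w$ with Schauder-type elliptic regularity, working locally both in the interior and up to~$\partial \Oe$. An $L^\infty$ bound $\|w\|_{L^\infty(\Oe)} \leq C \|f\|_{L^\infty(\Oe)}$ follows from Moser iteration applied to the Neumann problem on the Lipschitz domain~$\Oe$, and since the coefficients of $1 - \Delta$ are constant, interior Schauder estimates yield $w \in C^{1,\beta}_{\mathrm{loc}}(\Oe)$ for every $\beta \in (0,1)$.

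For regularity up to the boundary I would cover~$\partial \Oe$ by the balls~$B_k$ from Section~\ref{s:domains}. In each~$B_k$ the boundary is the graph of a $C^{1,\alpha}$-function~$h_k^{(1)}$, and I would straighten it via the $C^{1,\alpha}$-diffeomorphism $\Phi_k(y', y_n) = (y', y_n - h_k^{(1)}(y'))$. In the new coordinates, $v := w \circ \Phi_k^{-1}$ satisfies a uniformly elliptic equation in divergence form
\[
-\partial_i\bigl( a^{ij}(y)\, \partial_j v \bigr) + c(y) v = g(y)
\]
on a half-ball, with coefficients $a^{ij}, c \in C^{0,\alpha}$ (because the Jacobian of~$\Phi_k$ is $C^{0,\alpha}$) and $g \in L^\infty$, together with the conormal boundary condition $a^{nj} \partial_j v = 0$ on the flat portion of the boundary. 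Applying the boundary Schauder estimate for conormal derivative problems on $C^{1,\alpha}$ boundaries then gives $v \in C^{1,\alpha}$ up to the flat boundary, and pulling back through the $C^{1,\alpha}$-diffeomorphism~$\Phi_k$ preserves the $C^{1,\alpha}$ regularity. Combining the resulting local estimates with the interior one via a partition of unity subordinate to~$\{B_k\}$ yields $w \in C^{1,\alpha}(\Oe)$.

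The principal obstacle is the boundary Schauder step itself, since the transformed data~$g$ is only in~$L^\infty$ rather than in $C^{0,\alpha}$, while textbook boundary Schauder estimates typically require H\"older continuity of the right-hand side. I would circumvent this either by invoking Lieberman's boundary $C^{1,\alpha}$ estimate for oblique/conormal derivative problems, which accommodates bounded right-hand sides, or by a bootstrap through the $W^{2,p}$-theory for Neumann problems to obtain $v \in C^{1,\beta}$ for every $\beta \in (0,1)$ and in particular for $\beta = \alpha$.
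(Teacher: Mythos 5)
The paper disposes of this lemma with a one-line citation to Section~9 of Agmon--Douglis--Nirenberg~\cite{Agmon1959}, whereas you reconstruct the underlying boundary-regularity argument in full. Your overall plan (interior Schauder, boundary flattening via the $C^{1,\alpha}$ graph map, reduction to a conormal problem on a half-ball, then boundary $C^{1,\alpha}$ estimates) is the standard route and is essentially what the cited reference carries out, so the two approaches are compatible in spirit. You also correctly flag the genuine technical point: after flattening, the right-hand side is only bounded, not H\"older continuous, so textbook Schauder does not apply verbatim.

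However, your second proposed fix does not work. After straightening the boundary the operator is in divergence form with coefficients $a^{ij}\in C^{0,\alpha}$; $W^{2,p}$ estimates for such operators require (at least) Lipschitz coefficients, since one must differentiate $a^{ij}$ to pass to non-divergence form. Equivalently, on the original domain, the $W^{2,p}$ theory for the Neumann--Laplacian requires $\partial\Oe\in C^{1,1}$, which fails when $0<\alpha<1$. Moreover, the stated conclusion ``$v\in C^{1,\beta}$ for every $\beta\in(0,1)$'' is too strong: with $a^{ij}\in C^{0,\alpha}$ the best one can obtain for $\nabla v$ is $C^{0,\alpha}$, which is exactly what the lemma asserts but no more. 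Your first fix, via Lieberman's $C^{1,\alpha}$ estimates for conormal problems with $L^\infty$ inhomogeneity, is correct and sufficient. A cleaner alternative that stays within classical Schauder theory is to split off a particular solution: solve $(1-\Delta)v=\widetilde f$ on a smooth ball $\Dd\supset\overline{\Oe}$ with zero Dirichlet data, where $\widetilde f$ is the zero extension of $f=\Se u$; then $v\in W^{2,p}(\Dd)$ for all finite $p$, hence $v\in C^{1,\beta}(\overline{\Dd})$ for any $\beta<1$, and $w-v$ solves the homogeneous equation in $\Oe$ with Neumann data $-\partial_\nu v\in C^{0,\alpha}(\partial\Oe)$. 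The Schauder estimate for the homogeneous conormal problem on a $C^{1,\alpha}$ domain with $C^{0,\alpha}$ Neumann data then gives $w-v\in C^{1,\alpha}(\overline{\Oe})$, and the conclusion follows.
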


\begin{proof}
This follows from the results in Section~9 of Agmon et al.~\cite{Agmon1959}.
\end{proof}

\begin{lemma}
\label{l:holder_g}
There exists a constant~$C > 0$ such that
\begin{equation}
\label{eq:holder_g}
\sup_{x' \in \partial (\Oe \cap \Ot)} |\partial_{\nu} \Kj \Sj w (x')| \leq C \| w \|_{L^2(\Oe \cap \Ot)} \, d^{\alpha}, \quad j=1,2,
\end{equation}
for every~$w \in L^2(\Oe \cap \Ot)$.
\end{lemma}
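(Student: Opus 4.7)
The plan is to split $\partial(\Oe \cap \Ot)$ into two portions. The first, $\Gamma_j = \partial \Oj \cap \overline{\Oe \cap \Ot}$, is the part on which the outward normal of $\Oe \cap \Ot$ coincides with that of $\Oj$; here the estimate is trivial because $\partial_{\nu_j} \Kj \Sj w = 0$ by the homogeneous Neumann condition defining $\Kj$. The second, $\Gamma_k = \partial \Omega_k \cap \Oj$ with $k \ne j$, is the nontrivial part, and every $x' \in \Gamma_k$ lies within Hausdorff distance $Cd$ of $\partial \Oj$. The factor $d^\alpha$ should come from the $C^{1,\alpha}$ regularity of $\Oj$ combined with the boundary-gradient hypothesis $|\nabla(h_k^{(1)} - h_k^{(2)})| \le Cd^\alpha$.

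Writing $u = \Kj \Sj w$ and choosing, for each $x' \in \Gamma_k$, a closest point $x_0 \in \partial \Oj$ (so $|x' - x_0| \le Cd$), I would exploit $\nu_j(x_0) \cdot \nabla u(x_0) = 0$ to split
\[
\partial_\nu u(x') = \bigl( \nu_k(x') - \nu_j(x_0) \bigr) \cdot \nabla u(x_0) + \nu_k(x') \cdot \bigl( \nabla u(x') - \nabla u(x_0) \bigr).
\]
Here $|\nu_k(x') - \nu_j(x_0)| \le Cd^\alpha$ follows from the boundary-gradient hypothesis together with the $C^{0,\alpha}$ regularity of $\nu_j$, while $|\nabla u(x') - \nabla u(x_0)| \le Cd^\alpha \|u\|_{C^{1,\alpha}(\overline{\Oj})}$ is just the H\"older modulus of $\nabla u$. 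Together these give $|\partial_\nu u(x')| \le Cd^\alpha \|u\|_{C^{1,\alpha}(\overline{\Oj})}$ uniformly on $\Gamma_k$.

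The main obstacle is to dominate $\|u\|_{C^{1,\alpha}(\overline{\Oj})}$ by $\|w\|_{L^2(\Oe \cap \Ot)}$, since Lemma~\ref{l:holder_sol} only guarantees $C^{1,\alpha}$ membership under $L^\infty$ data while our $w$ is merely $L^2$. I would bypass this by representing $u$ through the Neumann Green's function $G_j$ of $1-\Delta$ on $\Oj$, namely $u(x) = \int_{\Oe \cap \Ot} G_j(x,y) w(y) \, dy$, and applying Cauchy--Schwarz in $y$. The required $L^2_y$ bound of size $Cd^\alpha$ on $\nu_k(x') \cdot \nabla_x G_j(x',y)$ is then obtained by running the same subtraction argument on the $x$-variable: since $G_j(\,\cdot\,,y)$ also satisfies the homogeneous Neumann condition on $\partial \Oj$, one writes
\[
\nu_k(x') \cdot \nabla_x G_j(x',y) = \bigl(\nu_k(x')-\nu_j(x_0)\bigr) \cdot \nabla_x G_j(x_0,y) + \nu_k(x') \cdot \bigl(\nabla_x G_j(x',y) - \nabla_x G_j(x_0,y)\bigr),
\]
and the H\"older regularity of $\nabla_x G_j$ together with the cancellation supplied by the Neumann condition produces a kernel whose $L^2_y$ norm is $O(d^\alpha)$. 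Combining this with the pointwise argument above yields the claimed estimate for $\partial_\nu \Kj \Sj w (x')$.
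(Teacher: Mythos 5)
Your core geometric argument is the same as the paper's. The paper writes $\partial_\nu\Kt\St w = (n_1-n_2)\cdot\nabla\Kt\St w + n_2\cdot\nabla\Kt\St w$, drops the second term as vanishing by the Neumann condition on $\partial\Ot$, and bounds $|n_1-n_2|\leq|\nabla(h_k^{(1)}-h_k^{(2)})|\leq Cd^\alpha$. Read literally this is a one-point identity that conflates a point of $\partial\Oe\cap\Ot$ with the nearby point on $\partial\Ot$ where the Neumann condition actually holds; your two-point subtraction at $x'$ and $x_0\in\partial\Oj$, charging the $C^{0,\alpha}$ modulus of $\nabla u$ for the $O(d)$ displacement, is the careful reading of what the paper's one-liner must mean. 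So the boundary decomposition into $\partial\Oj$-piece and $\partial\Omega_k$-piece, and the angle-plus-displacement estimate yielding $|\partial_\nu u(x')|\leq Cd^\alpha\|u\|_{C^{1,\alpha}(\overline{\Oj})}$, are in full agreement with the paper.

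Where I see a genuine problem is in the closing regularity step. Your worry is legitimate: the paper finishes by asserting $\|\nabla\Kt\St w\|_{L^\infty(\Ot)}\leq C\|\Kt\St w\|_{H^1(\Ot)}$, which is not a general elliptic estimate and fails for arbitrary $w\in L^2$ in dimension $n\geq 2$; so you are right to flag this (and notably the paper's own proof shares the gap). But the Green's function route as sketched does not close it. The Neumann Green's function of $1-\Delta$ satisfies $\nabla_x G_j(x',y)\sim|x'-y|^{1-n}$ as $y\to x'$, which is not in $L^2_y$ for any $n\geq 2$, so the first term $(\nu_k(x')-\nu_j(x_0))\cdot\nabla_x G_j(x_0,\,\cdot\,)$ does not have a finite $L^2_y$ norm even with the $d^\alpha$ prefactor; and the H\"older difference $\nabla_x G_j(x',\,\cdot\,)-\nabla_x G_j(x_0,\,\cdot\,)$ buys the factor $d^\alpha$ only at the cost of an even stronger singularity in $|x'-y|$ near the diagonal, so its $L^2_y$ norm is again infinite. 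To make this rigorous one would need a near/far decomposition together with an explicit quantitative cancellation in the near region, which the proposal does not supply. As written, then, your argument replaces the paper's unjustified $L^\infty$-to-$H^1$ assertion with a kernel estimate that is stated as if routine but in fact diverges; the regularity gap remains open in both proofs, and you should be careful not to present the Green's function step as complete.
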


\begin{proof}
Let~$n_j$ be the outwards normal on~$\partial \Oj$ for $j=1,2$. 
On the boundary~$\partial \Oe$,~$\partial_{\nu} \Ke \Se w = 0$,
and on~$\partial \Ot$,~$\partial_{\nu} \Kt \St w = 0$.
We prove~(\ref{eq:holder_g}) for~$j=2$. The proof when~$j=1$ is analogous.
Thus,
\[
\partial_{\nu} \Kt \St w = n_1 \cdot \nabla \Kt \St w = (n_1 - n_2) \cdot \nabla \Kt \St w + n_2 \cdot \nabla \Kt \St w,
\]
and since it is clear that~$n_2 \cdot \nabla \Kt \St w = 0$ on~$\partial \Ot$,
\[
\sup_{\partial (\Oe \cap \Ot)} |\partial_{n_1} \Kt \St w| \leq C d^{\alpha} \, \| \nabla \Kt \St w \|_{L^2(\Ot)} 
\leq C d^{\alpha} \, \| w \|_{L^2(\Oe \cap \Ot)}.
\]
Here, we also used the fact that there exists a constant~$C$, independent of~$\Kt \St w$, such
that~$\| \nabla \Kt \St w \|_{L^{\infty}(\Ot)} \leq C \| \Kt \St w \|_{H^1(\Ot)}$.
Moreover, 
\[
|n_1 - n_2| \leq |\nabla (h_k^{(1)} - h_k^{(2)})| \leq C d^{\alpha},
\]
so we obtain~(\ref{eq:holder_g}) for~$j=2$.
\end{proof}

\noindent
We can use Lemma~\ref{l:holder_g} to refine the estimates provided in Lemma~\ref{l:applip_maineq}.

\begin{lemma}
\label{l:applip_maineq_holder}
There exists a constant~$C > 0$ such that
\begin{equation}
\label{eq:applip_maineq_holder}
\| \Kt w - \S \Ke \Ss w \|^2 \leq C d^{1/2+\alpha} \, \| w \|^{2} \quad \mbox{for every } w \in L^2(\Ot)
\end{equation}
and
\begin{equation}
\label{eq:applip_Bu_holder}
\| \B \vp \|^2 \leq C d^{1+\alpha} \, \| \vp \|^{2} \quad \mbox{for every } \vp \in \Xm.
\end{equation}
\end{lemma}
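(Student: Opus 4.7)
The plan is to mirror the proof of Lemma~\ref{l:applip_maineq}, replacing the generic trace bound $\|g\|_{L^2(\partial(\Oe\cap\Ot))}\leq C\|w\|$ used there by the H\"older refinement
\[
\|g\|_{L^2(\partial(\Oe\cap\Ot))} \leq C d^{\alpha}\|w\|_{L^2(\Ot)},\qquad g := \partial_\nu(\Kt w - \S\Ke\Ss w),
\]
and re-running the argument. The extra factor $d^{\alpha}$ is exactly what Lemma~\ref{l:holder_g} delivers through the closeness $|n_1-n_2|\leq Cd^{\alpha}$ of the outward normals on the two boundaries.

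The first step is to verify this refined bound on $g$. On $\partial\Oe\cap\Ot$ the Neumann condition on $\partial\Oe$ kills $\partial_{n_1}\Ke\Ss w$, so $g=\partial_{n_1}\Kt w$; on $\Oe\cap\partial\Ot$ analogously $g=-\partial_{n_2}\Ke\Ss w$. I would apply the argument of Lemma~\ref{l:holder_g} to each piece. The only issue is that Lemma~\ref{l:holder_g} is stated for $w\in L^2(\Oe\cap\Ot)$, so for a general $w\in L^2(\Ot)$ I would split $w=w_1+w_2$ according to whether its support lies in $\Oe\cap\Ot$ or in $\Ot\setminus\Oe$, applying Lemma~\ref{l:holder_g} directly to $w_1$ and repeating its computation for $w_2$ (using $n_2\cdot\nabla\Kt w_2=0$ on $\partial\Ot$ together with $C^{1,\alpha}$ regularity of $\Kt w_2$ up to the boundary).

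Second, I would feed this bound into Lemma~\ref{l:diff_cap}. Its worst case (parts (ii) and (iii)) produces $\|v\|_{H^1(\Oe\cap\Ot)}^{2}\leq Cd^{1/2}\|g\|_{L^2(\partial(\Oe\cap\Ot))}\|w\|\leq Cd^{1/2+\alpha}\|w\|^{2}$. On the strip $\Ot\setminus\Oe$, where $v=\Kt w$, I would combine the refined pointwise bound $|\partial_{n_1}\Kt w|\leq Cd^{\alpha}\|w\|$ on $\partial\Oe\cap\Ot$ obtained in the first step with a Green identity in the strip and a trace inequality, to sharpen the crude estimate $\int_{\Ot\setminus\Oe}v^{2}\,dx\leq Cd\|w\|^{2}$ coming directly from~(\ref{eq:est_max_Kj}) to $\int_{\Ot\setminus\Oe}v^{2}\,dx\leq Cd^{1/2+\alpha}\|w\|^{2}$. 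Summing the two contributions yields~(\ref{eq:applip_maineq_holder}). For~(\ref{eq:applip_Bu_holder}), the same machinery applies with $w=\vp\in\Xm$; the eigenfunctions belong to $C^{1,\alpha}(\overline{\Oe})$ by Lemma~\ref{l:holder_sol}, and~(\ref{eq:est_vp2_subdom}) already supplies the extra half-power $\|\vp\|_{L^2(\Oe\setminus\Ot)}\leq Cd^{1/2}\|\vp\|$, lifting the exponent from $d^{1/2+\alpha}$ to $d^{1+\alpha}$.

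The main obstacle is the strip estimate $\int_{\Ot\setminus\Oe}v^{2}\leq Cd^{1/2+\alpha}\|w\|^{2}$: the non-tangential bound~(\ref{eq:est_max_Kj}) alone only produces $Cd\|w\|^{2}$, which is strong enough when $\alpha\leq 1/2$ but strictly too weak when $\alpha>1/2$, since $d^{1/2+\alpha}<d$ in that regime. Closing the gap requires carefully combining the smallness of the interior-face Neumann datum, furnished by the Lemma~\ref{l:holder_g}-type argument, with a Green identity on the thin annulus $\Ot\setminus\Oe$ and trace inequalities; in particular it rests on $C^{1,\alpha}$ boundary regularity of $\Kt w$ up to $\partial\Oe\cap\Ot$, which does not follow from $w\in L^2$ alone and therefore requires an approximation by $L^{\infty}$ data via Lemma~\ref{l:holder_sol} or an analogous local elliptic-regularity argument.
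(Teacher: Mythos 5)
Your plan captures the right ingredient for the interior piece~$\Oe \cap \Ot$ (combining Lemma~\ref{l:holder_g}'s refined bound $\|g\|_{L^2(\partial(\Oe\cap\Ot))}\leq C d^{\alpha}\|w\|$ with Lemma~\ref{l:diff_cap}), and your instinct that the strip $\Ot\setminus\Oe$ is the delicate spot is sound. However, the ``Green identity plus trace inequality'' sharpening you propose for the strip does not actually work, and in fact is not what the paper does. Running the Green identity on the strip with $(1-\Delta)\Kt w = w$ there produces a boundary term $\int_{\partial\Oe\cap\Ot}\Kt w\,\partial_{n_1}\Kt w\,dS$ and an interior term $\int_{\Ot\setminus\Oe}\Kt w\, w\,dx$; even using $\|\partial_{n_1}\Kt w\|_{L^2}\leq C d^{\alpha}\|w\|$, the trace of $\Kt w$ is only $O(\|w\|)$, so you get a bound of order $d^{\alpha}\|w\|^2 + d^{1/2}\|w\|^2$, which is \emph{worse}, not better, than the crude $Cd\|w\|^2$ for small $d$. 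The $Cd\|w\|^2$ strip bound is genuinely sharp (take $w\equiv 1$ so that $\Kt w\equiv 1$), so~(\ref{eq:applip_maineq_holder}) as stated cannot hold for $\alpha > 1/2$ when $\Ot\not\subset\Oe$; the honest conclusion of the paper's argument is $\|\Kt w - \S\Ke\Ss w\|^2 \leq C\max\{d,\,d^{1/2+\alpha}\}\|w\|^2$, and the paper absorbs this downstream by taking $\eps = Cd$ whenever $\alpha>1/2$, so nothing later is affected.

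For~(\ref{eq:applip_Bu_holder}) you do need the $d^{1+\alpha}$ (not $d$) on the strip, and this is where the paper's decisive observation, absent from your proposal, comes in: on $\Ot\setminus\Oe$ one has $\S\vp = 0$, so $\B\vp = \Kt\S\vp$ solves the \emph{homogeneous} equation $(1-\Delta)\B\vp = 0$ there, with $\partial_{\nu}\B\vp = 0$ on $\partial\Ot\cap\Oe^c$ and $\partial_{\nu}\B\vp = \partial_{\nu}\Kt\S\vp$ on $\partial\Oe\cap\Ot$, the latter being $O(d^{\alpha})\|\vp\|$ in $L^2$ by Lemma~\ref{l:holder_g}. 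Feeding this small Neumann datum into Lemma~\ref{l:lap_u_hom} on $\Ot\setminus\Oe$ gives $\|\N(\B\vp)\|_{L^2(\partial(\Ot\setminus\Oe))}\leq C d^{\alpha}\|\vp\|$ and hence $\int_{\Ot\setminus\Oe}(\B\vp)^2\,dx \leq C d^{1+2\alpha}\|\vp\|^2$, comfortably below $d^{1+\alpha}$. Your ``same machinery applies'' glosses over precisely this homogeneous-equation reduction, which is the reason the strip is controllable for $\B\vp$ but not for a generic $w\in L^2(\Ot)$; the lift supplied by~(\ref{eq:est_vp2_subdom}) fixes the $\Oe\cap\Ot$ piece but does nothing for $\Ot\setminus\Oe$.
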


\begin{proof}
Proceeding as in Lemma~\ref{l:applip_maineq}, we obtain
the inequality in~(\ref{eq:applip_maineq_holder}) and also that
\[
\| \B \vp \|_{L^2(\Oe \cap \Ot)}^2 \leq C d^{1+\alpha} \|\vp\|_{L^2(\Oe)}^2
\]
since~(\ref{eq:holder_g}) implies that
\begin{equation}
\label{eq:lip_est_g_holder}
\| g \|_{L^2(\partial(\Oe \cap \Ot)} \leq C d^{\alpha} \| w \|_{L^2(\Ot)},
\end{equation}
where~$g$ is as in Lemma~\ref{l:diff_cap}.

In~$\Ot \setminus \Oe$,~$\B \vp = \Kt \S \vp$. Thus,~$\B \vp$ is a solution to~$(1 - \Delta) \B \vp = \S \vp = 0$
in~$\Ot \setminus \Oe$ such that~$\partial_{\nu} \B \vp = 0$ on~$\partial \Ot \cap \Oe^{c}$
and~$\partial_{\nu} \B \vp = \partial_{\nu} \Kt \S \vp$ on~$\Ot^{c} \cap \partial \Oe$. 
Lemma~\ref{l:holder_g} with~$w = \S \vp$ now implies that
\[
\| \partial_{\nu} \B \vp \|_{L^2(\partial(\Ot \setminus \Oe))} \leq C d^{\alpha} \, \|\vp \|_{L^2(\Oe)}
\]
and thus, Lemma~\ref{l:lap_u_hom} proves that
\[
\| \N(\B \vp) \|_{L^2(\partial(\Ot \setminus \Oe))} +
\| \N(\nabla \B \vp) \|_{L^2(\partial(\Ot \setminus \Oe))} 
\leq
C d^{\alpha} \, \| \vp \|_{L^2(\Oe)}.
\]
Hence,
\[
\int_{\Ot \setminus \Oe} (\B \vp)^2 \, dx \leq C d^{1+2\alpha} \| \vp \|_{L^2(\Oe)}^2. \qedhere
\]
\end{proof}

Since a $C^{1,\alpha}$ domain can be considered a Lipschitz domain, we know that the results 
from the previous section hold for~$\eps = C d^{1/2}$. 
However, in this case we may choose~$\eps = C  d^{\alpha + 1/2}$ if~$\alpha \leq 1/2$. 
This is clear from Lemma~\ref{l:applip_maineq_holder} and inequality~(\ref{eq:est_vp2_subdom}).
If~$\alpha > 1/2$, we may choose~$\eps = C d$. Inequality~(\ref{eq:est_vp2_subdom}) is 
the reason for the restriction on~$\alpha$. 

Similarly to the Lipschitz case, we shall employ Lemma~\ref{l:est_BKt} to obtain information
about the difference~$\mu_k^{-1} - \lm^{-1}$.
However, we wish to express the extension~$\Kttext$ in more explicit terms that depend directly on
the eigenfunction~$\vs$.

\begin{proposition}
\label{p:holder}
Suppose that~$\Oe$ is a $C^{1,\alpha}$ domain and~$\Ot$ is a perturbation in the sense of Section~\ref{s:domains} which satisfies~{\rm(}\ref{eq:grad_hk}{\rm)}. 
Then
\begin{equation}
\label{eq:holder_main}
\lm[m]^{-1} - \um[k]^{-1} = \tk + O(d^{1+\alpha})
\quad \mbox{for } k = 1,2,\ldots,\Jm. 
\end{equation}
Here,~$\tau = \tk$ is an eigenvalue of
\begin{equation}
\label{eq:holder_main_eig}
\begin{aligned}
\tau \ip{\vp}{\vs}{1} = {} &
\lm^{-2} \biggl(  
\int_{\Oe \setminus \Ot} \bigl( 
	(1 - \lm)  \vp \vs
	+ \nabla \vp \cdot \nabla \vs 
	\bigr) \, dx\\
& \qquad \qquad \quad -
\int_{\Ot \setminus \Oe} \bigl( 
	(1 - \lm) \widetilde{\vp} \widetilde{\vs}
	+ \nabla \widetilde{\vp} \cdot \nabla \widetilde{\vs} 
	\bigr) \, dx \biggr)
\end{aligned}
\end{equation}
for all $\vs \in \Xj[m]$, where~$\vp \in \Xj[m]$. 
Moreover,~$\tk[1], \tk[2], \ldots, \tk[\Jm]$ in~{\rm(}\ref{eq:holder_main}{\rm)}
run through all eigenvalues of~{\rm(}\ref{eq:holder_main_eig}{\rm)} counting their multiplicities.
\end{proposition}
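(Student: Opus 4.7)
The plan is to build on Proposition~\ref{p:lipschitz}, which already applies since every $C^{1,\alpha}$ domain is Lipschitz. Two refinements are needed: sharpening the $O(d^{3/2})$ remainder to $O(d^{1+\alpha})$ (which matters for $\alpha<1/2$), and rewriting the bilinear form~(\ref{eq:lip_main_eig}) so that $\Kt \S \vp$ is replaced by an expression involving only $\vp$, $\vs$ and their $C^{1,\alpha}$-extensions $\widetilde{\vp}$, $\widetilde{\vs}$ furnished by Lemma~\ref{l:holder_ext}.

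For the first refinement, I would revisit the application of Theorem~\ref{t:asymp}. By Section~\ref{s:lipeps} together with Lemma~\ref{l:applip_maineq_holder}, we may take $\eps = C d^{\alpha+1/2}$ when $\alpha\le 1/2$ and $\| \B \vp \|^2 \le C d^{1+\alpha}\| \vp \|^2$ on $\Xm$. Redoing the proof of Lemma~\ref{l:est_KtB} with this sharper $L^2$ bound on $\B\vp$ yields $\| \Kt \B \vp \|^2 \le C d^{(3+\alpha)/2}\| \vp \|^2$, so that $\rr$ from~(\ref{eq:def_rho}) satisfies $\rr \le C d^{(3+\alpha)/2}$, which lies in $O(d^{1+\alpha})$ for $0 < \alpha < 1$. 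Anticipating $\tk = O(d^{1+\alpha})$ from the main term, we also have $|\tk|\eps = O(d^{3/2+2\alpha})\subset O(d^{1+\alpha})$. Theorem~\ref{t:asymp} therefore delivers $\um[k]^{-1} - \lm^{-1} = \tk + O(d^{1+\alpha})$, where $\tk$ solves~(\ref{eq:T2_tk}).

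For the second refinement, starting from $\tk \ip{\S \vp}{\S \vs}{2} = \lm \ip{\B \vp}{\Kt \S \vs}{2} + O(d^{1+\alpha})$, I would apply Lemma~\ref{l:est_BKt} with $w = \S \vs$. The decomposition $\Kt^2 \S \vs = \lm^{-2} \S \vs + \lm^{-1} \B \vs + \Kt \B \vs$, coming from $\Ke \vs = \lm^{-1} \vs$ and $\B \vs = \Kt \S \vs - \lm^{-1} \S \vs$, identifies the leading part of any $C^{1,\alpha}$-extension $\Kttext \in H^1(\R^n)$: we choose it so that on $\Oe \setminus \Ot$ the leading piece agrees with $\lm^{-2} \vs$ (which is already smooth there) and on $\Ot \setminus \Oe$ with $\lm^{-2} \widetilde{\vs}$. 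Substituting into~(\ref{eq:BpKtw}) reproduces, up to remainder, exactly the right-hand side of~(\ref{eq:holder_main_eig}).

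The main obstacle is controlling the correction terms produced by $\lm^{-1} \B \vs + \Kt \B \vs$ inside $\Kttext$, together with the difference between the natural $H^1$-extension of $\lm^{-2}\S\vs$ from $\Ot$ and the target functions $\lm^{-2}\vs$ on $\Oe\setminus\Ot$ and $\lm^{-2}\widetilde{\vs}$ on $\Ot\setminus\Oe$. Each thin layer has Lebesgue measure of order $d$, and Lemma~\ref{l:holder_sol} together with the $C^{1,\alpha}$ regularity of $\vp$, $\vs$ and the gradient condition~(\ref{eq:grad_hk}) supplies pointwise Hölder bounds of order $d^{\alpha}$ for the integrands and their gradients in a neighborhood of the boundary. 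Multiplying a measure of order $d$ by a Hölder factor of order $d^{\alpha}$ produces precisely the target $O(d^{1+\alpha})$, while the $\Kt \B \vs$ contribution is absorbed using the bound derived in the first step. Assembling these pieces establishes~(\ref{eq:holder_main}) together with~(\ref{eq:holder_main_eig}).
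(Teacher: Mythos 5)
Your proposal follows essentially the same route as the paper's proof: both apply Lemma~\ref{l:est_BKt} with $w = \S\vs$, both use the decomposition $\Kt^2\S\vs = \lm^{-2}\S\vs + \lm^{-1}\B\vs + \Kt\B\vs$ (the paper writes it as $\Kt\B\vs + \lm^{-1}\Kt\S\vs$ and then further splits $\Kt\S\vs$), both identify $\lm^{-2}\widetilde{\vs}$ as the leading piece of the extension, and both sharpen $\rr$, $\eps$ and $\|\Kt\B\vp\|$ exactly as you describe so that the remainder becomes $O(d^{1+\alpha})$. The only small inaccuracy is the line $|\tk|\eps = O(d^{3/2+2\alpha})$: since $\tk=O(d)$ rather than $O(d^{1+\alpha})$, the bound is actually $O(d^{3/2+\alpha})$, but this is still contained in $O(d^{1+\alpha})$, so the conclusion stands; and where you invoke ``pointwise Hölder bounds of order $d^{\alpha}$'' the paper makes this precise with a non-tangential maximal function estimate $\|\N(r)\|+\|\N(\nabla r)\| \leq Cd^{\alpha}\|\vs\|$ on the correction $r$ built from $\B\vs$, proved as in Lemma~\ref{l:applip_maineq_holder}, but this is the same idea expressed more carefully.
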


\begin{proof}
We express~$\Kt^2 \S \vs$ in terms of the operator~$\B$:
\[
\Kt^2 \S \vs = \Kt (\B \vs + \S \Ke \vs) = \Kt \B \vs + \B \Ke \vs + \S \Ke^2 \vs. 
\]
If~$\vs \in \Xj[m]$, then
\[
\Kt^2 \S \vs = \Kt \B \vs + \lm^{-1} \Kt \S \vs.
\]
Put~$\Kttext = \widetilde{\Kt \B \vs} + \lm^{-1} \widetilde{\Kt \S \vs}$,
where~$\widetilde{\Kt \B \vs}$ and~$\widetilde{\Kt \S \vs}$
are the extensions of~$\Kt \B \vs$ and~$\Kt \S \vs$, respectively, given by Lemma~\ref{l:extension}(i).
Then~$\Kttext \in H^1(\R^n)$ and
\begin{equation}
\label{eq:est_Kttext_holder}
\begin{aligned}
\| N(\Kttext) \|_{L^2(\partial \Ot)} + \|N(\nabla \Kttext)\|_{L^2(\partial \Ot)}
\leq {} &
C \bigl( \| \B \vs \|_{L^2(\Ot)}  + \| \S \vs \|_{L^2(\Ot)} \bigr)\\
\leq {} & C \| \vs \|_{L^2(\Oe)}.
\end{aligned}
\end{equation} 
Moreover, $W = \lm^{-2} \widetilde{\vs} + \widetilde{\Kt \B \vs} + \lm^{-1} r$ on~$\Oe \setminus \Ot$ and~$\Ot \setminus \Oe$,
where~$r$ is defined as follows. 
On~$\Oe \cap \Ot$, we let~$r = \B \vs$. Then we extend~$r$ to~$\R^n$ such 
that~$r = \widetilde{\Kt \S \vs} - \lm^{-1} \vs$ in~$\Oe \setminus \Ot$
and~$r = \Kt \S \vs - \lm^{-1} \widetilde{\vs}$ in~$\Ot \setminus \Oe$,
where the extensions~$\widetilde{\Kt \S \vs}$ and~$\widetilde{\vs}$ are given
by Lemma~\ref{l:extension}{\rm(i)} and Lemma~\ref{l:extension}{\rm(ii)}, respectively.
It is now possible to use the same argument employed in the proof of Lemma~\ref{l:applip_maineq_holder} to obtain that
\[
\| N(r) \|_{L^2(\partial (\Oe \cap \Ot))} + \| N(\nabla r) \|_{L^2(\partial (\Oe \cap \Ot))}
\leq
C d^{\alpha} \, \| \vs \|_{L^2(\Oe)}.
\]
Now, this fact and the Cauchy-Schwarz inequality proves that
\begin{equation*}
\begin{aligned}
\int_{\Ot \setminus \Oe} \bigl( 
	|r  \widetilde{\vp}| + |\nabla r \cdot \nabla \widetilde{\vp}| \bigr) \, dx
\leq {} &
	C d^{1 + \alpha} \| \vs \|_{L^2(\Oe)} \| \vp \|_{L^2(\Oe)}.
\end{aligned}
\end{equation*}
Similarly, we can bound the corresponding integral over the domain~$\Oe \setminus \Ot$.
Using~(\ref{eq:applip_Bu_holder}), we can also refine the estimate given in~(\ref{l:est_KtB}):
\begin{equation}
\label{eq:est_KtB_holder}
\| \Kt \B \vs \|_{L^2(\Ot)}^2 \leq C d^{3/2 + \alpha/2} \, \|\vs \|_{L^2(\Oe)}^2
\leq C d^{1+\alpha} \, \| \vs \|_{L^2(\Oe)}^2.
\end{equation}
Thus, we obtain from Lemma~\ref{l:est_BKt} and inequality~(\ref{eq:est_Kttext_holder}) that
\begin{equation}
\label{eq:est_holder_BvpKtSvs}
\begin{aligned}
\lm \int_{\Ot} \B \vp \Kt \S \vs \, dx
= {} &
\lm^{-2} \int_{\Oe \setminus \Ot} \bigl( 
	(1 - \lm)  \vp \vs
	+ \nabla \vp \cdot \nabla \vs 
	\bigr) \, dx\\
& -
\lm^{-2} \int_{\Ot \setminus \Oe} \bigl( 
	(1 - \lm) \widetilde{\vp} \widetilde{\vs} +
	\nabla \widetilde{\vp} \cdot \nabla \widetilde{\vs} \bigr) \, dx\\
&+ O(d^{1+\alpha}) \|\vp\|_{L^2(\Oe)} \|\vs\|_{L^2(\Oe)}.
\end{aligned}
\end{equation}
Inequalities~(\ref{eq:est_KtB_holder}) and~(\ref{eq:applip_Bu_holder}) imply that
\begin{equation}
\label{eq:holder_aaa}
\rho =\lm^{-1} \sup_{\| \vp \| = 1} \bigl( \| \Kt \B \vp \|^2 + \eps \| \B \vp\|^2 \bigr)
= O(d^{\,(3+\alpha)/2})
\end{equation}
and thus, Theorem~\ref{t:asymp} proves that 
\[
\mu_k^{-1} - \lm[m]^{-1} = \tk + O(\rho + |\tk|\eps) = \tk + O(d^{\,1+\alpha}) 
\]
since we can choose~$\eps = C d^{\alpha + 1/2}$ if~$\alpha \leq 1/2$ and~$\eps = C d$ if~$\alpha > 1/2$, and
\begin{equation}
\label{eq:holder_aaa2}
\tk \ip{\vp}{\vs}{2} = \tk \ip{\S \vp}{\S \vs}{2} + O(|\tk|d) = \lm \ip{\B\vp}{\Kt\S\vs}{2} + O(d^{\,2}).
\end{equation}
Now, equations~(\ref{eq:holder_aaa}),~(\ref{eq:holder_aaa2}), and~(\ref{eq:est_holder_BvpKtSvs}), imply~(\ref{eq:holder_main}).
\end{proof}

Suppose that it is possible to characterize the perturbed domain~$\Ot$ by a function~$h$
defined on the boundary~$\partial \Oe$ such that~$(x',x_{\nu}) \in \partial \Ot$ is 
represented by~$x_{\nu} = h(x')$, where~$(x',0) \in \partial \Oe$ and~$x_{\nu}$ is the signed distance to the 
boundary~$\partial \Oe$ (with~$x_{\nu} < 0$ when~$x \in \Oe$).
The function~$h$ is assumed to be Lipschitz and satisfy~$|\nabla h| \leq C d^{\alpha}$. 
Thus, we obtain the following variation of Proposition~\ref{p:holder}.

\begin{corollary}
\label{c:holder_boundary}
Suppose that in addition to the assumptions of Proposition~\ref{p:holder}, the domain~$\Ot$ can be characterized by the function~$h$ as above.
Then
\begin{equation}
\label{eq:holder_main_boundary}
\begin{aligned}
\lm[m]^{-1} - \um[k]^{-1} = {} & 
\tk + O(d^{1+\alpha})
\end{aligned}
\end{equation}
for~$k = 1,2,\ldots,\Jm$. Here,~$\tau = \tk$ is an eigenvalue of
\begin{equation}
\label{eq:holder_main_boundary_eig}
\tau \ip{\vp}{\vs}{1} =
\lm^{-2} 
\int_{\partial \Oe} h(x') \bigl( 
	(1 - \lm)  \vp \vs
	+ \nabla \vp \cdot \nabla \vs 
	\bigr)  dS(x') \quad \mbox{for all } \vs \in \Xj[m],
\end{equation}
where~$\vp \in \Xj[m]$. Moreover,~$\tk[1], \tk[2], \ldots, \tk[\Jm]$ in~{\rm(}\ref{eq:holder_main_boundary}{\rm)}
run through all eigenvalues of~{\rm(}\ref{eq:holder_main_boundary_eig}{\rm)} counting their multiplicities.
\end{corollary}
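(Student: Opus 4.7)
The plan is to derive Corollary~\ref{c:holder_boundary} from Proposition~\ref{p:holder} by collapsing the two volume integrals over $\Oe\setminus\Ot$ and $\Ot\setminus\Oe$ in~(\ref{eq:holder_main_eig}) onto a single boundary integral over $\partial\Oe$ weighted by $h(x')$, modulo an error of order $d^{1+\alpha}$. Since $|h(x')|\leq d$, the symmetric difference $\Oe\triangle\Ot$ lies inside a tubular neighborhood of $\partial\Oe$ of width comparable to $d$, where signed-distance coordinates $(x',x_\nu)$ are well defined and the volume element satisfies $dx=(1+O(x_\nu))\,dx_\nu\,dS(x')$.

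First I would set up the parametrization: in these coordinates $\Oe\setminus\Ot$ is the strip $\{h(x')<0,\,h(x')<x_\nu<0\}$ and $\Ot\setminus\Oe$ is $\{h(x')>0,\,0<x_\nu<h(x')\}$, so Fubini reduces each volume integral in~(\ref{eq:holder_main_eig}) to an iterated integral over $\partial\Oe$ of an inner integral along an $x_\nu$-segment of length $|h(x')|$. On the $\Ot\setminus\Oe$ side the integrand is understood through the $C^{1,\alpha}$-extensions $\widetilde\vp,\widetilde\vs$ provided by Lemma~\ref{l:extension}(ii), whose traces on $\partial\Oe$ match those of $\vp,\vs$.

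Second, I would exploit the regularity of the integrand. Lemma~\ref{l:holder_sol} places $\vp,\vs\in C^{1,\alpha}(\overline{\Oe})$, so after extension the function
\[
F(x',x_\nu)=(1-\lm)\vp\vs+\nabla\vp\cdot\nabla\vs
\]
is uniformly $C^{0,\alpha}$ across $\partial\Oe$ in the tubular strip. This produces the pointwise estimate
\[
\int_{0}^{h(x')}F(x',x_\nu)\,dx_\nu=h(x')\,F(x',0)+O(|h(x')|^{1+\alpha})
\]
uniformly in $x'$, and the Jacobian correction contributes a further $O(|h|^2)\leq O(d^{1+\alpha})$. Summing over $x'\in\partial\Oe$ and reattaching the two signs coming from the strips $\{h<0\}$ and $\{h>0\}$, the pair of volume integrals in~(\ref{eq:holder_main_eig}) collapses to $\int_{\partial\Oe}h(x')F(x',0)\,dS(x')$ up to total error $O(d^{1+\alpha})$.

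Inserting this reduction into~(\ref{eq:holder_main_eig}) yields~(\ref{eq:holder_main_boundary_eig}), and the new $O(d^{1+\alpha})$ term is absorbed into the remainder already present in~(\ref{eq:holder_main}). The main obstacle is bookkeeping: tracking the signs from the two strips when pairing them with the signed volume integrals, and verifying that the $C^{0,\alpha}$-matching of $F$ across $\partial\Oe$, furnished by Lemma~\ref{l:extension}(ii), is uniform enough to yield the pointwise $O(|h(x')|^{1+\alpha})$ remainder rather than only a weaker mean bound, so that integration against $dS(x')$ preserves the desired $O(d^{1+\alpha})$ order.
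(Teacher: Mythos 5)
Your proposal is correct and follows essentially the same route as the paper: the paper also passes to signed-distance coordinates, uses the $C^{1,\alpha}$ regularity of $\vp$ and $\vs$ (together with the vanishing of $\partial_\nu\vp$ on $\partial\Oe$) to freeze the integrand at $x_\nu=0$ with pointwise errors of order $d^{1+\alpha}$ for $\vp\vs$ and $d^{\alpha}$ for $\nabla\vp\cdot\nabla\vs$, and then Fubini collapses the two volume integrals of Proposition~\ref{p:holder} to the single $h$-weighted boundary integral. The uniformity concern you raise at the end is already taken care of in the paper by the quantitative bounds of Lemma~\ref{l:holder_sol} and Lemma~\ref{l:holder_ext}, which give $C^{1,\alpha}$ control of $\vp$, $\vs$ and their extensions in terms of $\|\vp\|_{L^2(\Oe)}$, $\|\vs\|_{L^2(\Oe)}$.
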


\begin{proof}
We first prove that
\begin{equation}
\label{eq:est_holder_rdiff_vp}
\left\{
\begin{aligned}
&\sup_{(x',x_{\nu}) \in \Oe \setminus \Ot} |\vp(x',x_{\nu}) - \vp(x',0)|
\leq C d^{1+\alpha} \, \|\vp\|_{L^2(\Oe)},\\
&\sup_{(x',x_{\nu}) \in \Oe \setminus \Ot} |\nabla \vp(x',x_{\nu}) - \nabla \vp(x',0))| 
\leq C d^{\alpha} \, \|\vp\|_{L^2(\Oe)},
\end{aligned} \right.
\end{equation}
where the corresponding estimates hold for~$\widetilde{\vp}$ on~$\Ot \setminus \Oe$.
Since~$\vp \in C^{1,\alpha}(\Oe)$, it is clear that for~$x = (x',x_{\nu}) \in \Oe \setminus \Ot$,
\[
\vp(x',x_{\nu}) = \vp(x', 0) + x_{\nu} \partial_{\nu} \vp(x',0) + O(d^{1+\alpha}),
\]
where the remainder is bounded by~$C d^{1+\alpha} \, \|\vp\|_{L^2(\Oe)}$. This shows
that the first inequality in~(\ref{eq:est_holder_rdiff_vp}) is true. Similarly, 
the second inequality in~(\ref{eq:est_holder_rdiff_vp}) is also valid.
Thus,
\[
\int_{\Oe \setminus \Ot} \bigl( |\vp(x) - \vp(x',0)|^2 + 
	|\nabla \vp(x) - \nabla \vp(x',0)|^2 \bigr) \, dx
\leq
C d^{1 + \alpha} \, \| \vp \|_{L^2(\Oe)}^2,
\]
with the corresponding estimate for~$\widetilde{\vp}$ on~$\Ot \setminus \Oe$.
Hence, Proposition~\ref{p:holder} implies that
$\lm^{-1} - \um[k]^{-1}$ is given by
\[
\begin{aligned}
&\lm^{-2} \biggl(  
\int_{\partial \Oe \cap \Ot^c}  
	\int_{0}^{h(x')} \bigl( (1 - \lm)  \vp(x',0)^2
	+ |\nabla \vp(x',0)|^2 
	\bigr) \, dx_{\nu} \, dS(x')\\
& \qquad \qquad \quad -
\int_{\partial \Oe \cap \Ot} \int_0^{-h(x')} \bigl( 
	(1 - \lm) \widetilde{\vp}(x',0)^2
	+ |\nabla \widetilde{\vp}(x',0)|^2 
	\bigr)  \, dx_{\nu} \, dS(x') \biggr)\\
& \quad + O(d^{1+\alpha}).
\end{aligned}
\]
The desired conclusion follows from this statement.
\end{proof}

%%%%%%%%%%%%%%%%%%%%%%%%%%%%%%%%%%%%%%%%%%%%%%%%%%%%%%%%%%%%%%%%%%5
%%%%%%%%%%%%%%%%%%%%%%%%%%%%%%%%%%%%%%%%%%%%%%%%%%%%%%%%%%%%%%%%%%5
%%%%%%%%%%%%%%%%%%%%%%%%%%%%%%%%%%%%%%%%%%%%%%%%%%%%%%%%%%%%%%%%%%5
%%%%%%%%%%%%%%%%%%%%%%%%%%%%%%%%%%%%%%%%%%%%%%%%%%%%%%%%%%%%%%%%%%5
%%%%%%%%%%%%%%%%%%%%%%%%%%%%%%%%%%%%%%%%%%%%%%%%%%%%%%%%%%%%%%%%%%5
%%%%%%%%%%%%%%%%%%%%%%%%%%%%%%%%%%%%%%%%%%%%%%%%%%%%%%%%%%%%%%%%%%5
%%%%%%%%%%%%%%%%%%%%%%%%%%%%%%%%%%%%%%%%%%%%%%%%%%%%%%%%%%%%%%%%%%5
%%%%%%%%%%%%%%%%%%%%%%%%%%%%%%%%%%%%%%%%%%%%%%%%%%%%%%%%%%%%%%%%%%5
%%%%%%%%%%%%%%%%%%%%%%%%%%%%%%%%%%%%%%%%%%%%%%%%%%%%%%%%%%%%%%%%%%5

\subsection{Sharpness of the requirement~$\alpha > 0$ in Theorem~\ref{i:t:holder_boundary}}
\label{s:ex}
We now employ Proposition~\ref{p:lipschitz_subdom} to a specific Lipschitz perturbation
of a two dimensional cylinder. The aim here is also to show that Theorem~\ref{i:t:holder_boundary} is sharp
in the sense that~$\alpha > 0$ is necessary. 

Suppose that~$\eta:  \R \rightarrow \R$ is a periodic nonnegative Lipschitz continuous 
function such that~$\eta(t+1) = \eta(t)$ for all~$t \in \R$.
Let the rectangle~$\Oe$ in~$\R^2$ be defined by~$0 < x < T$ and~$0 < y < R$, where~$R$ and~$T$
are constants, and the subdomain~$\Ot \subset \Oe$ be defined by~$0 < x < T$ and~$\delt \eta(x / \delt) < y < R$,
where~$\delt = T / N$ for some large integer~$N$. We will consider boundary conditions periodic in~$x$
with Neumann data given on the curves~$y = \delt \eta(x/\delt)$ and~$y = R$. 

\begin{proposition}
\label{p:example}
For the domains~$\Oe$ and~$\Ot$ defined above,
\begin{equation}
\label{eq:ex_lip_main_subdom}
\lm[m]^{-1} - \um[k]^{-1} = \tk + O(\delt^2)
\quad \mbox{for } k = 1,2,\ldots,\Jm. 
\end{equation}
Here,~$\tau = \tk$ is an eigenvalue of
\begin{equation}
\label{eq:ex_lip_main_eig_subdom}
\begin{aligned}
\tau \ip{\vp}{\vs}{1} = {} &
\frac{\delt}{\lm^{2}} (\eta_0 + \eta_1) \int_{0}^{T} 
 \nabla \vp(x,0) \cdot \nabla \vs(x,0) \, dx\\
& + 
\frac{\delt(1 - \lm)}{\lm^{2}} \eta_0 \int_{0}^{T} 
\vp(x,0) \vs(x,0) \, dx
\end{aligned}
\end{equation}
for all $\vs \in \Xj[m]$, where~$\vp \in \Xj[m]$ and
\[
\eta_0 = \int_0^1 \eta(X) \, dX \qquad \mbox{and} \qquad \eta_1 = \int_0^1 V(X,\eta(X)) \eta'(X) \, dX.
\]
The function~$V$ is the solution to~$-\Delta_{X,Y} V = 0$ specified in~{\rm(}\ref{eq:V}{\rm)} below
and~$\eta_1$ is not zero if~$\eta$ is not identically constant.
Moreover,~$\tk[1], \tk[2], \ldots, \tk[\Jm]$ in~{\rm(}\ref{eq:ex_lip_main_subdom}{\rm)}
run through all eigenvalues of~{\rm(}\ref{eq:ex_lip_main_eig_subdom}{\rm)} counting their multiplicities.
\end{proposition}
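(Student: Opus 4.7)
Since $\Ot\subset\Oe$, the starting point is Proposition~\ref{p:lipschitz_subdom}, which reduces the claim to the analysis of the bilinear form
\[
\tau \ip{\vp}{\vs}{1} = \int_{\Ot}(\lm v_{\vp}v_{\vs} + v_{\vp}\vs) \, dx,
\]
where $v_{\vp} = \B\vp$ is the unique function on $\Ot$ with $(1-\Delta)v_\vp = 0$, $\partial_\nu v_\vp = -\lm^{-1}\partial_\nu \vp$ on the wavy curve $y = \delt\eta(x/\delt)$ and $\partial_\nu v_\vp = 0$ on $y = R$. The abstract remainder of Proposition~\ref{p:lipschitz_subdom} is only $O(\delt^{3/2})$ and must be sharpened to $O(\delt^2)$. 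Since $\Ot\subset\Oe$, the remark following Section~\ref{s:lipeps} gives $\eps = C\delt$, so $|\tk|\eps = O(\delt^2)$. For the quantity $\rho$ in Theorem~\ref{t:asymp}, the boundary-layer structure of $v_\vp$ (essentially confined to a strip of width $\delt$) gives $\|v_\vp\|_{L^2(\Ot)}^2 = O(\delt^3)\|\vp\|^2$, while the smoothing action of $\Kt$ on rapidly oscillating functions yields $\|\Kt v_\vp\|_{L^2(\Ot)}^2 = O(\delt^\beta)\|\vp\|^2$ for some $\beta > 2$; together these give $\rho = O(\delt^2)$.

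To identify $\tau$, the contribution $\lm \int v_\vp v_\vs \, dx = O(\delt^3)$ is absorbed into the remainder, and $\int_\Ot v_\vp \vs \, dx$ is expanded by Green's identity (using $(1-\Delta)v_\vp = 0$ and $(1-\Delta)\vs = \lm \vs$):
\[
\int_\Ot v_\vp \vs \, dx = -\lm^{-2}\int_{\partial \Ot} \vs\,\partial_\nu \vp \, dS - \lm^{-1}\int_{\partial \Ot} v_\vp \, \partial_\nu \vs \, dS.
\]
Only the wavy part of $\partial \Ot$ contributes, since $\partial_\nu \vp$ and $\partial_\nu \vs$ vanish on $y=R$. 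The first boundary integral is computed by Taylor expansion about $y = 0$, using the Neumann condition $\partial_y \vp(x,0) = 0$ and the eigenvalue identity $\partial_y^2 \vp = (1-\lm)\vp - \partial_x^2 \vp$, together with the homogenization identities $\int_0^T \eta(x/\delt)g(x)\, dx = \eta_0 \int_0^T g(x)\, dx + O(\delt^2)$ and $\int_0^T \eta'(x/\delt)g(x)\, dx = O(\delt^3)$ for $x$-periodic $g$. An integration by parts in $x$ converts $\vs\,\partial_x^2 \vp$ into $-\partial_x \vs\,\partial_x \vp$, producing exactly $\frac{\delt\eta_0}{\lm^2}\int_0^T [(1-\lm)\vp\vs + \partial_x \vp\,\partial_x \vs](x,0)\, dx$.

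The second boundary integral yields the $\eta_1$-contribution through the boundary-layer ansatz $v_\vp(x,y) \approx -\lm^{-1}\delt V(x/\delt, y/\delt)\partial_x \vp(x,0)$. Restricted to $y = \delt\eta(x/\delt)$, and using $\partial_\nu \vs \sqrt{1+(\eta')^2} = \eta'\,\partial_x \vs(x,0) + O(\delt)$, the integral becomes, up to $O(\delt^2)$,
\[
-\lm^{-1}\int_{\partial\Ot} v_\vp\,\partial_\nu \vs \, dS \approx \frac{\delt}{\lm^2} \int_0^T V(x/\delt, \eta(x/\delt))\,\eta'(x/\delt)\,\partial_x\vp(x,0)\,\partial_x\vs(x,0)\, dx.
\]
The periodic function $V(X,\eta(X))\eta'(X)$ has mean $\eta_1$ by definition, so homogenization gives $\frac{\delt\eta_1}{\lm^2}\int_0^T \partial_x\vp\,\partial_x\vs\, dx + O(\delt^2)$. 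Combining the two boundary integrals (and observing $\nabla\vp(x,0)\cdot\nabla\vs(x,0) = \partial_x\vp\,\partial_x\vs$, since $\partial_y\vp(x,0) = 0$) recovers~(\ref{eq:ex_lip_main_eig_subdom}). Positivity of $\eta_1$ for non-constant $\eta$ follows from Green's identity for $V$ in the half-strip $S = \{0<X<1,\; Y > \eta(X)\}$: periodicity in $X$ and exponential decay as $Y\to\infty$ give $\eta_1 = \int_S |\nabla_{XY} V|^2\, dX\, dY$, strictly positive unless $V \equiv 0$, i.e.\ unless $\eta'\equiv 0$.

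The main obstacle is justifying the boundary-layer ansatz to sufficient accuracy on the wavy curve. A direct energy estimate for $w = v_\vp - v^{\mathrm{app}}$, with $v^{\mathrm{app}} = -\lm^{-1}\delt V(x/\delt, y/\delt)\partial_x\vp(x,0)$, yields only $\|w\|_{H^1(\Ot)} = O(\delt^{1/2})$, because the PDE residual $(1-\Delta)v^{\mathrm{app}} = 2\lm^{-1}V_X(x/\delt, y/\delt)\partial_x^2\vp(x,0) + O(\delt)$ has $O(1)$ amplitude within the $\delt$-strip. Upgrading this to $O(\delt^2)$ in the boundary integrals of interest requires a two-scale expansion: an $O(\delt^2)$ inner corrector obtained from a cell problem of the form $\Delta_{XY}\tilde V = V_X$ that cancels the oscillatory residual, together with a macroscopic outer correction absorbing the averaged Taylor remainder of $\partial_\nu \vp$ on the wavy curve.
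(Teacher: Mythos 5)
Your argument is essentially correct and arrives at the same formula, but it takes a genuinely different route than the paper. The paper substitutes the two-scale ansatz $v=\delt w_0+\delt V_0+\delt^2 V_1+\cdots$ into $\int_{\Ot}v\,\vs\,dx$, argues that the boundary-layer terms contribute $O(\delt^2)$, and obtains the $\eta_1$ coefficient \emph{indirectly}: the Neumann data $g_{w_0}$ for the macroscopic corrector $w_0$ is forced by the orthogonality condition~(\ref{eq:orthogo}) needed for solvability of the $V_1$ cell problem, and a single Green's formula on $\Oe$ then converts $\int w_0\vs\,dx$ into a boundary integral at $y=0$. You instead apply Green's identity in $\Ot$ \emph{immediately}, reducing $\int_{\Ot}v_\vp\vs\,dx$ to two boundary integrals on the wavy curve; the $\eta_0$ part then falls out of Taylor expansion of $\partial_\nu\vp$ on $\gd$, and the $\eta_1$ part falls out of the $V_0$ boundary-layer profile evaluated on $\gd$. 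Your route has the advantage that $w_0$ never needs to be constructed at all; the cost is that the entire burden falls on the trace of the ansatz on $\gd$, which — as you acknowledge — carries the same ``quite standard but unjustified'' two-scale expansion gap the paper also leaves open. The identification of $\eta_1$ as $\int|\nabla_{XY}V|^2$ is the same computation as~(\ref{eq:nVnV}).

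Two intermediate estimates you state are off, though neither affects the conclusion. First, $\|v_\vp\|_{L^2(\Ot)}^2 = O(\delt^3)\|\vp\|^2$ is too optimistic: $v_\vp$ is \emph{not} confined to a $\delt$-strip, since the ansatz contains the macroscopic term $\delt w_0$ (a genuine $H^1(\Oe)$-function of amplitude $O(1)$), which alone yields $\|v_\vp\|^2=O(\delt^2)$. Similarly, no ``smoothing gain'' $\beta>2$ is available for $\|\Kt v_\vp\|^2$, because the bulk part $\delt w_0$ is not oscillatory; one simply has $\|\Kt v_\vp\|^2\leq\|v_\vp\|^2=O(\delt^2)$, which combined with $\eps=C\delt$ already gives $\rho=O(\delt^2)$, so the sharpened remainder still works. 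Second, the homogenization identity $\int_0^T\eta'(x/\delt)\,g(x)\,dx=O(\delt^3)$ should read $O(\delt^2)$: writing $\eta'(x/\delt)=\delt\,\tfrac{d}{dx}\Phi(x/\delt)$ with $\Phi$ periodic and integrating by parts twice against periodic $g$ yields only one extra factor of $\delt$. Again, $O(\delt^2)$ is sufficient, since that contribution is absorbed into the error anyway.
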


To prove Proposition~\ref{p:example}, we will use Proposition~\ref{p:lipschitz_subdom}. To this end, we will
find the solution~$v$ to the problem
\begin{equation}
\label{eq:subdom_v}
(1 - \Delta)v = 0 \mbox{ in } \Ot, \qquad \partial_{\nu} v = -\lm^{-1} \partial_{\nu} \vp \mbox{ on } \gd \mbox{ and } \partial_{\nu} v = 0 \mbox{ on } \gR,
\end{equation}
and~$v$ is periodic in the first argument with period~$T$, that is,
\begin{equation}
\label{eq:subdom_v_period}
v(0,y) = v(T,y) \quad \mbox{and} \quad v'_x(0,y) = v'_x(T,y)
\qquad \mbox{for all } y \in (0,R).
\end{equation}
By~$\gd$ we denote the part of the boundary of~$\Ot$ where~$y = \delt \eta(x/\delt)$,
and by~$\gR$ the part where~$y = R$. Similarly,~$\gz$ is the part of~$\Oe$ where~$y = 0$.
The ansatz for the asymptotic expansion of~$v$ has the following form:
\begin{equation}
\label{eq:v_subdom_main}
v(x,y) = \delt w_0(x,y) + \delt V_0(X,Y;x) + \delt^2 V_1(X,Y;x) + \cdots,
\end{equation}
where~$w_0$,~$V_0$, and~$V_1$ are solutions to two model problems,
and the remainder consists of higher order terms. Since the construction of the asymptotic expansion of
the solution to problem~(\ref{eq:subdom_v}) is quite standard, we confine ourselves to only finding the
leading terms of this expansion. 
We have also introduced the new coordinates~$X = x/\delt$ and~$Y = y / \delt$.
Substituting~(\ref{eq:v_subdom_main}) into~(\ref{eq:subdom_v}), we obtain
\begin{equation}
\label{eq:subdom_v_subs}
\begin{aligned}
0 = {} & -\delt^{-1} \Delta_{X,Y} V_0(X,Y;x) \\
&- \Delta_{X,Y} V_1(X,Y;x) - 2\partial_{X}\partial_x V_0(X,Y;x)\\
&+ \delt \biggl( (1- \Delta_{x,y})w_0(x,y) + V_0(X,Y;x) \\
& \qquad \quad - \partial_x^2 V_0(X,Y;x) - 2\partial_{X}\partial_{x} V_1(X,Y;x) \biggr)
+ O(\delt^2)
\end{aligned}
\end{equation}
with the boundary condition
\begin{equation}
\label{eq:subdom_v_subs_bndry}
\begin{aligned}
-\lm^{-1} \nhat \cdot \nabla_{x,y} \vp(x,y) = {} & 
	\nhat \cdot \nabla_{X,Y} V_0(X,Y;x)\\
& + \delt \nhat \cdot \bigl( \nabla_{X,Y} V_1(X,Y;x) + \nabla_{x,y} w_0(x,y) \bigr)\\
& + \delt \nhat \cdot (\partial_x V_0(X,Y;x), \; 0) 
		 + O(\delt^2) 
\end{aligned}
\end{equation}
on~$\gd$, where
\begin{equation}
\label{eq:nhatz}
\nhat(X) = \frac{(\eta'(X), \; -1)}{\sqrt{1 + (\eta'(X))^2}}
\end{equation}
is the outwards normal on~$\gd$.

The function~$w_0$ is the solution with periodic boundary conditions in the sense of (\ref{eq:subdom_v_period})
to
\[
(1 - \Delta) w_0 = 0 \mbox{  in } \Oe
\quad \mbox{and} \quad \partial_{\nu} w_0 = g_{w_0} \in L^2(\gz) \mbox{ and } \partial_{\nu} w_0 = 0 \mbox{ on } \gR,
\]
where the Neumann data~$g_{w_0}$ will be specified below. 

Let~$\Ote$ be defined by~$0 < X < 1$ and~$\eta(X) < Y$.
We denote by~$\Gd$ the curve~$Y = \eta(X)$ for~$0 < X < 1$.
The functions~$V_j$,~$j=0,1$, will be solutions to the following model problem for right-hand sides specified below:
\begin{equation}
\label{eq:modelprob}
-\Delta_{X,Y} W = F \mbox{ in } \Ote \quad \mbox{and} \quad \partial_{\nu} W = G \mbox{ on } \Gd,
\end{equation}
and~$W$ is periodic in~$X$:
\[
W(0,Y) = W(1,Y) \quad \mbox{and} \quad W'_X(0,Y) = W'_X(1,Y) \mbox{ for all } Y.
\]
The functions~$G \in L^2(\Gd)$ and~$F$ satisfy 
\begin{equation}
\label{eq:orthogo}
\int_{\Ote} F \, dX dY + \int_{\Gd} G \, dS = 0
\end{equation}
and
\[
|F(X,Y)| \leq C e^{-b Y} \qquad \mbox{for some } b > 0.
\]
The orthogonality condition
above implies that this solution decays exponentially as~$Y \rightarrow \infty$:
\begin{equation}
\label{eq:est_V_j}
|W(X,Y)| \leq C e^{-a Y} \qquad \mbox{for some } a > 0.
\end{equation}

We now specify the boundary data for the model problems. 
Since~$\vp$ satisfies~$\vp'_y(x,0) = 0$ for all~$x$, a Taylor expansion yields
\[
\vp'_x(x,y) = \vp'_x(x,0) + O(y^2) = \vp'_{x}(x,0) + O(\delt^2)
\]
if~$y = \delt \eta(x/\delt)$. Similarly,
\[
\vp'_y(x,y) = \delt \eta(X) \vp''_{yy}(x,0) + O(\delt^2).
\]
Thus, 
\begin{equation}
\label{eq:ex_boundarycond}
\begin{aligned}
\lm^{-1} {\partial_{\nu} \vp}(x,y) = {} & \frac{\eta'(X) \vp'_x(x,0)}{\lm \sqrt{1 + (\eta'(X))^2}}
 - \delt \frac{\eta(X)\vp''_{yy}(x,0)}{\lm \sqrt{1 + (\eta'(X))^2}} + O(\delt^2)
\end{aligned}
\end{equation}
for~$(x,y) \in \gd$.

We now consider the variables~$X$,~$Y$, and~$x$ as independent. 
Equations~(\ref{eq:subdom_v_subs}) and~(\ref{eq:subdom_v_subs_bndry}) together with~(\ref{eq:ex_boundarycond}) imply that
\[
-\Delta_{X,Y} V_0 = 0 \mbox{ in } \Ote
\]
and
\begin{equation}
\label{eq:V_0_bndry}
\nhat \cdot \nXY V_0 = - \frac{\eta'(X)\vp'_x(x, 0)}{\lm \sqrt{1 + (\eta'(X))^2}} \mbox{ on } \Gd.
\end{equation}
It is clear that
\[
\int_{\Gd} \frac{\eta'(X)\vp'_x(x, 0)}{\lm \sqrt{1 + (\eta'(X))^2}}  \, dS(X) = \frac{\vp'_x(x,0)}{\lm} \int_0^1 \eta'(X) \, dX = 0.
\]
Thus equation~(\ref{eq:modelprob}) with~$F = 0$ and~$G$ equal to the right-hand side of~(\ref{eq:V_0_bndry})
has a solution~$V_0$ decaying exponentially as~$Y \rightarrow \infty$. The dependence on~$x$ can be described as follows.
Let~$V$ be the periodic (with respect to~$X$) solution to~(\ref{eq:modelprob}) with~$F = 0$ and
\begin{equation}
\label{eq:V}
G =  \frac{\eta'(X)}{1 + (\eta'(X))^2} \mbox{ on } \Gd.
\end{equation}
Then
\[
V_0(X,Y;x) = -\lm^{-1} V(X,Y) \vp'_{x}(x,0)
\] 
and
\[
\partial_{x} V_0(X,Y;x) = -\lm^{-1}V(X,Y) \vp''_{xx}(x,0).
\]

Similarly with above, equations~(\ref{eq:subdom_v_subs}),~(\ref{eq:subdom_v_subs_bndry}),
and~(\ref{eq:ex_boundarycond}) also imply that
\begin{equation}
\label{eq:V_1}
-\Delta_{X,Y} V_1 = 2 \partial_{X}\partial_{x} V_0(X,Y;x) \mbox{ in } \Ote
\end{equation}
and
\begin{equation}
\label{eq:V_1_bndry}
\nhat \cdot \nXY V_1 = \frac{\eta(X)\vp''_{yy}(x,0) - \eta'(X)\partial_x V_0(X,Y;x)}{\lm \sqrt{1 + (\eta'(X))^2}}
\mbox{ on } \Gd.
\end{equation}
Put~$F$ equal to the right-hand side of~(\ref{eq:V_1}) and~$G$ equal to the right-hand side of~(\ref{eq:V_1_bndry}).
We require the orthogonality condition in~(\ref{eq:orthogo}), so 
\[
0 = \int_{\Ote} 2\partial_{X} \partial_x V_0(X,Y;x) \, dX dY + \int_{\Gd} G \, dS.
\]
Furthermore, since~$\Delta V = 0$,
\begin{equation}
\label{eq:nVnV}
\begin{aligned}
\int_{\Ote} \partial_{X} V(X,Y) \, dX dY 
= {} &
\int_{\Gd} V(X,\eta(X)) \frac{\eta'(X)}{\sqrt{1 + (\eta'(X))^2}} \, dS \\
= {} &
\int_{\Ote} \Delta V(X,Y) V(X,Y) \, dX dY \\
&+ \int_{\Ote} \nabla V(X,Y) \cdot \nabla V(X,Y) \, dX dY \\
= {} &
\int_{\Ote} \nabla V(X,Y) \cdot \nabla V(X,Y) \, dX dY.
\end{aligned}
\end{equation}
Thus
\[
\begin{aligned}
0 &= \int_{\Ote} 2\partial_{X} \partial_x V_0(X,Y;x) \, dX dY + \int_{\Gd} G \, dS\\
&= 
\lm^{-1} \bigl( \eta_1 \vp''_{xx}(x,0) - \eta_0 \vp''_{yy}(x,0) \bigr)
- {w_0}'_{y}(x,0),
\end{aligned}
\]
where
\[
\eta_0 = \int_0^1 \eta(X) \, dX \qquad \mbox{and} \qquad \eta_1 = \int_0^1 V(X,\eta(X)) \eta'(X) \, dX.
\]
We note here that~$\eta_1 > 0$ if~$V$ is nonzero (due to~(\ref{eq:nVnV})), or equivalently, if~$\eta$ is not identically constant.
For the validity of~(\ref{eq:orthogo}), it is sufficient that
\begin{equation}
\label{eq:rand_w0}
{w_0}'_{y}(x,0) = \lm^{-1} \bigl( \eta_1 \vp''_{xx}(x, 0) - \eta_0 \vp''_{yy}(x,0) \bigr)
\end{equation}
and we have therefore obtained that the function~$g_{w_0}$ is given by the right-hand side of~(\ref{eq:rand_w0})
since~$\partial_{\nu} w_0 = -{w_0}'_{y}$ on the curve~$\gz$.

Now, from~(\ref{eq:lip_main_eig_subdom}) it is clear that we wish to simplify the expression
\[
\int_{\Ot} \bigl( \lm v_{\vp} v_{\vs} + v_{\vp} \vs \bigr) \, dx.
\]
With the current notation,~$v_{\vp} = v$ and~$v_{\vs}$ is the corresponding solution to~(\ref{eq:subdom_v}) 
with~$\vs$ instead of~$\vp$. Hence,
\[
\int_{\Ot}  \lm v_{\vp} v_{\vs}  \, dx = O(\delt^2).
\]
Thus, we consider
\[
\begin{aligned}
\int_{\Ot} v \vs \, dx &= \delt \int_{\Ot} w_0 \vs \, dx + \delt \int_{\Ot} V_0 \vs \, dx + \delt^2 \int_{\Ot} V_1 \vs \, dx
+  \cdots 
\end{aligned}
\]
From~(\ref{eq:est_V_j}) it follows that the integrals involving~$V_0$ and~$V_1$ are of
order~$O(\delt^2)$. For the first term,
\[
\int_{\Ot} w_0 \vs \, dx = \int_{\Oe} w_0 \vs \, dx - \int_{\Oe \setminus \Ot} w_0 \vs \, dx,
\]
where the second term is of order~$O(\delt)$. The first term can be expressed as
\[
\begin{aligned}
\int_{\Oe} w_0 \vs \, dx = {} & \int_{\partial \Oe} \partial_{\nu} w_0 \vs \, dS - \int_{\Oe} \nabla w_0 \cdot \nabla \vs \, dx\\
= {} & \int_{\partial \Oe} \partial_{\nu} w_0 \vs \, dS + \int_{\Oe} w_0 \Delta \vs \, dx,
\end{aligned}
\]
where we used the fact that~$\partial_{\nu} \vp = 0$ on~$\partial \Oe$. Moreover,~$\partial_{\nu} w_0 = 0$ at~$\gR$.
This implies that
\[
\begin{aligned}
\lm \int_{\Oe} w_0 \vs \, dx = {} & \int_{\gd} \partial_{\nu} w_0 \vs \, dS\\
= {} & -\lm^{-1} \int_{0}^T \bigl( \eta_1 \vp''_{xx}(x,0) - \eta_1 \vp''_{yy}(x,0) \bigr) dx\\
= {} & \lm^{-1} (\eta_0 + \eta_1) \int_{0}^T  \vp'_x(x,0)\vs'_x(x,0) \, dx\\
& + \lm^{-1}(1 - \lm) \eta_0 \int_{0}^T \vp(x,0)\vs(x,0)  \, dx,
\end{aligned}
\]
where we used the fact that~$\vp''_{xx} + \vp''_{yy} = (1-\lm)\vp$ and integration by parts.

Thus, we have obtained an expression for the right-hand side of~(\ref{eq:lip_main_eig_subdom}).
Moreover, the representation of~$v$ in~(\ref{eq:v_subdom_main}) implies that~$\| \Kt \B \vp\|^2 = O(\delt^2)$. 
This proves that Proposition~\ref{p:example} holds.

\def\bibname{References}

\end{document}